\documentclass[11pt,a4paper,final,reqno]{amsart}
\usepackage[utf8]{inputenc}
\usepackage[T1]{fontenc}
\usepackage[UKenglish]{babel}
\usepackage[a4paper,margin=1in]{geometry}

\usepackage{amsmath,amsthm,amsfonts,amssymb}
\usepackage{mathrsfs,dsfont}
\usepackage{graphicx}
\usepackage{url}
\usepackage{verbatim}
\usepackage{xcolor}

\usepackage[autostyle]{csquotes} 

\usepackage{extarrows}

\renewcommand{\leq}{\leqslant}

\renewcommand{\geq}{\geqslant}

\renewcommand{\epsilon}{\varepsilon}



\newcommand{\real}{\mathds{R}}
\newcommand{\Comp}{\mathds{C}}
\newcommand{\Rd}{\real^{d}}
\newcommand{\nat}{\mathds{N}}
\newcommand{\Ecal}{\mathcal{E}}
\newcommand{\Dcal}{\mathcal{D}}
\newcommand{\sphere}{ \mathds{S}}
\newcommand{\Fourier}{\mathcal{F}}
\newcommand{\tinybullet}{{\scriptstyle\bullet}}

\newcommand{\loc}{\mathrm{loc}}

\newcommand{\casymp}[1]{\stackrel{#1}{\asymp}}


\newcommand{\I}{\mathds{1}}

\newcommand{\scalp}[2]{#1\cdot#2}
\newcommand{\nusj}{\mathring{\nu}}
\newcommand{\nulj}{\bar{\nu}}
\newcommand{\plj}{\bar{p}}
\newcommand{\psj}{\mathring{p}}

\theoremstyle{plain}
\newtheorem{theorem}{Theorem}[section]
\newtheorem{corollary}[theorem]{Corollary}
\newtheorem{lemma}[theorem]{Lemma}

\theoremstyle{definition}
\newtheorem{remark}[theorem]{Remark}
\newtheorem{example}[theorem]{Example}

\numberwithin{equation}{section}

\renewcommand\labelenumi{\textup{\alph{enumi})}}

\renewcommand\theenumi\labelenumi

\makeatletter\renewcommand{\p@enumii}{}\makeatother 

\makeatletter
\def\namedlabel#1#2{\begingroup
	#2%
	\def\@currentlabel{#2}%

	\label{#1}\endgroup
}

\newcounter{conum} \setcounter{conum}{-1}

\usepackage[unicode=true, pdfstartview={XYZ null null 1.00}, colorlinks=true, linkcolor=blue, urlcolor=blue, citecolor=purple]{hyperref}

\begin{document}

\title[Resolvent kernels and Schr\"odinger eigenstates for L\'evy operators]{Decay of resolvent kernels and Schr\"odinger eigenstates for L\'evy operators}
\author[K.~Kaleta]{Kamil Kaleta}
\address[K.~Kaleta]{
	Wroc\l aw University of Science and Technology, 
	Faculty of Pure and Applied Mathematics, 
	Wyb.\ Wyspia\'nskiego 27, 
	50-370 Wroc\l aw, Poland. 
	E-Mail: \textnormal{kamil.kaleta@pwr.edu.pl}}

\author[R.L.~Schilling]{Ren\'e L.\ Schilling}
\address[R.L.~Schilling]{
	TU Dresden, 
	Fakult\"at Mathematik, 
	Institut f\"ur Mathematische Stochastik, 
	01062 Dresden, Germany. 
	E-Mail: \textnormal{rene.schilling@tu-dresden.de}}

\author[P.~Sztonyk]{Pawe{\l} Sztonyk}
\address[P.~Sztonyk]{
	Wroc\l aw University of Science and Technology, 
	Faculty of Pure and Applied Mathematics, 
	Wyb.\ Wyspia\'nskiego 27, 
	50-370 Wroc\l aw, Poland. 
	E-Mail: \textnormal{Pawel.Sztonyk@pwr.wroc.pl}}

\subjclass[2020]{Primary 60J35, 47D08; Secondary 47D03, 60G51, 47A10}

\keywords{L\'evy measure; L\'evy process; convolution semigroup; heat kernel; sharp estimate; subexponential decay; exponential decay.}

\thanks{
	Research was supported by National Science Centre, Poland, grant no.\ 2019/35/B/ST1/02421, the 6G-life project (BMBF 16KISK001K) and the Dresden--Leipzig ScaDS.AI centre. 
}

\begin{abstract}
We study the spatial decay behaviour of resolvent kernels for a large class of non-local Lévy operators and bound states of the corresponding Schrödinger operators. Our findings naturally lead us to proving results for Lévy measures, which have subexponential or exponential decay, respectively. This leads to sharp transitions in the the decay rates of the resolvent kernels. We obtain estimates that allow us to describe and understand the intricate decay behaviour of the resolvent kernels and the bound states in either regime, extending findings by Carmona, Masters and Simon for fractional Laplacians (the subexponential regime) and classical relativistic operators (the exponential regime). Our proofs are mainly based on methods from the theory of operator semigroups.
\end{abstract}

\maketitle

\section{Introduction and presentation of results}\label{intro}

In the seminal paper \cite{Carmona-Masters-Simon} Carmona, Masters and Simon studied the decay of the resolvent kernels $g_\alpha(x)=\int_0^\infty e^{-\alpha t}p_t(x)\, dt$, $\alpha > 0$. The density $p_t(x)$ is the transition density (heat kernel) of a L\'evy-type operator. The investigation of Carmona, Masters and Simon was mainly motivated by applications to decay properties of Schr\"odinger eigenfunctions. Among other results were the following findings:
\begin{itemize}
\item 
	Let $L_\beta u(x) := -(-\Delta)^{\beta/2}u(x) := \lim_{\epsilon\to 0}\int_{|y|>\epsilon} \left(u(x+y)-u(x)\right)\nu(y)\,dy$ with $\beta \in (0,2)$ and $\nu(y)=\nu_\beta(y) = c_\beta |y|^{-d-\beta}$, be the \textbf{fractional Laplacian}, then there exists some constant $c=c(\alpha)\geq 1$ such that for every fixed $\alpha>0$
	\begin{gather*}
		g_\alpha(x)\asymp \nu(x),\quad\text{i.e.,}\quad
		\frac 1c \nu(x) \leq g_{\alpha}(x) \leq c\nu(x), \quad |x| \geq 1.
	\end{gather*} 

\item 
	Let $L_mu(x) = \left(m - \sqrt{-\Delta+m^2}\right) u(x) = \lim_{\epsilon\to 0} \int_{|y|>\epsilon} \left(u(x+y)-u(x)\right) \nu(y)\,dy$, $m>0$, be the \textbf{relativistic operator}, where $\nu(y) = \nu_m(y) \asymp e^{-m|y|} \left(|y|^{-d-1} \vee |y|^{-(d+2)/2}\right)$ is an  exponentially localized L\'evy density. Then
	\begin{align} \label{eq:est_CMS}
		 g_{\alpha}(x) \asymp e^{-m_{\alpha}|x|} \times \text{polynomial terms},
	\quad |x| \geq 1, 
	\end{align}
	where
	\begin{gather*}
		m_{\alpha} := \begin{cases}
			\sqrt{2m \alpha - \alpha^2}, & \alpha \in (0,m],\\
			m,& \alpha > m;
		\end{cases}
	\end{gather*}
	in particular, it is no longer true that $g_{\alpha}(x)$ is comparable with $\nu(x)$, $|x| \geq 1$. 
\end{itemize}
The operators $L_\beta$ and $L_m$ are L\'evy operators, i.e.\ they are generators of convolution semigroups (or L\'evy processes), and it is a natural question to ask \emph{which features of a L\'evy operator $L$ -- or its L\'evy measure $\nu$ -- ensure that $g_{\alpha}(x)$ is comparable with $\nu(x)$, for all $\alpha >0$, and for large values of $x$?}

\bigskip
In this paper we investigate the spatial behaviour at infinity of the resolvent kernels of a large class of L\'evy operators, which appear naturally as the infinitesimal generators of convolution semigroups or L\'evy processes, extending the results of Carmona, Masters and Simon. Our main motivation and principal application is the precise description of the decay properties of the \textbf{bound states} (more precisely, the eigenfunctions corresponding to negative eigenvalues) of fairly general non-local Schr\"odinger operators. 

By $\{ \mu_t,\, t\geq 0 \}$ we denote a vaguely continuous \textbf{convolution semigroup} of probability measures, i.e.\ a family of probability measures on (the Borel sets of) $\Rd$ satisfying
\begin{gather*}
	\mu_{t+s} = \mu_t*\mu_s,
	\quad
	\mu_0 = \delta_0,
	\quad\text{and}\quad
	\lim_{t\to 0} \int\phi\,d\mu_t = \phi(0).
\end{gather*}
for all $s,t>0$ and all $\phi\in C_c(\Rd)$. It is straightforward to see that the operators $P_t\phi(x) := \int_{\Rd} \phi(x+y)\,\mu_t(dy)$, $t\geq 0$, define a strongly continuous \textbf{contraction semigroup}, both on $L^p(\Rd,dx)$, $1\leq p < \infty$, and $C_\infty(\Rd) := \overline{C_c(\Rd)}^{\|\cdot\|_\infty}$ (the continuous functions vanishing at infinity). Standard references are Berg \& Forst \cite{Berg-Forst} or Jacob \cite{Jacob-1}. Since $P_t$ is positivity preserving and conservative, there is a one-to-one correspondence between $\{ \mu_t,\, t\geq 0 \}$, $\{ P_t,\, t\geq 0 \}$, and the \textbf{Markov process} $\{ X_t,\, t\geq 0 \}$. This equivalence is realized by the relations $\mathrm{law}(X_t)=\mu_t$ or $P_tu(x) = \mathds{E}u(X_t+x)$. Because of the convolution structure, the Markov process is a \textbf{L\'evy process}, i.e.\ a stochastic process with independent and stationary increments, which is continuous in probability, cf.\ \cite{Jacob-1} or \cite{Schilling}.
	
We can characterize a convolution semigroup (or a L\'evy process) in terms of the Fourier transform, which is of the form 
$\Fourier(\mu_t)(\xi) = \int_{\Rd} e^{i\scalp{\xi}{y}}\,\mu_t(dy) = \exp(-t\Psi(\xi))$.
The function $\Psi : \Rd\to\Comp$ is the \textbf{characteristic exponent} which is given by the \textbf{L\'evy--Khintchine representation}
\begin{gather*}
	\Psi(\xi) 
	= i\scalp{\ell}{\xi} + \frac 12 \scalp{\Sigma\xi}{\xi} + \int_{\Rd\setminus\{0\}} \left(1- e^{i\scalp{\xi}{y}} + i\scalp{\xi}{y}\I_{(0,1)}(|y|) \right) \nu(dy),\quad \xi\in\Rd.
\end{gather*}
The triplet $(\ell,\Sigma,\nu)$ comprising a vector $\ell\in\Rd$, a positive semidefinite matrix $\Sigma\in\real^{d\times d}$ and a Radon measure $\nu$ on $\Rd\setminus\{0\}$ satisfying $\int_{\Rd} \left(1\wedge |y|^2\right) \nu(dy) < \infty$ is called \textbf{L\'evy triplet}, and the measure $\nu$ is called \textbf{L\'evy measure}. Again, there is a one-to-one correspondence between $\{\mu_t,\, t\geq 0\}$, $\Psi$ and the triplet $(\ell,\Sigma,\nu)$. In order to avoid trivial cases, we assume that $\nu(\Rd\setminus\{0\})=\infty$.

In this paper, we will assume that the measures $\mu_t$ are symmetric, i.e.\ $\mu_t(B)=\mu_t(-B)$ for any Borel set $B\subset\Rd$. This is equivalent to saying that \enquote{$\Psi$ is real-valued} or \enquote{$\ell=0$ and $\nu$ is symmetric}. We will also assume that $\Sigma=0$ and $\nu(dy)=\nu(y)\,dy$ is absolutely continuous w.r.t.\ Lebesgue measure. In this case the L\'evy--Khintchine representation becomes
\begin{gather}\label{Levy1}
	\Psi(\xi) 
	= \int_{\Rd\setminus\{0\}} \left(1-\cos(\scalp{\xi}{y}) \right) \nu(y)\,dy,\quad \xi\in\Rd.
\end{gather}

The characteristic exponent can be used to represent the infinitesimal generator $(L,\Dcal(L))$ of the operator semigroup $\{P_t, \, t\geq 0\}$ in, say, $L^2(\Rd)$. We have, see \cite{Jacob-1,Bottcher-Schilling-Wang}, 
\begin{gather} \label{def:gen}
	\Fourier[L u](\xi) = - \Psi(\xi) \Fourier u(\xi), \quad \xi \in \Rd, 
	\quad u \in \Dcal(L):=\left\{v \in L^2(\Rd): \Psi \Fourier v \in L^2(\Rd) \right\}.
\end{gather}
This means that $L$ is a (non-local) pseudo-differential operator with symbol $-\Psi$. The operator $L$ is also known as \textbf{L\'evy operator}.

In order to control the symbol $\Psi$, the following maximal function $\Psi^*$ and its generalized inverse $\Psi^*_{-}$ are useful: 
\begin{gather}\label{eq:maximal}
	\Psi^*(r)=\sup_{|\xi|\leq r}  \Psi(\xi),\quad r \geq 0;
\end{gather}
$\Psi^*$ is continuous, increasing and satisfies $\lim_{r \to \infty} \Psi^*(r)=\infty$. Its generalized inverse function is defined as 
\begin{align}\label{eq:maximal_inv}
	\Psi^*_{-}(s)=\sup\{r>0: \Psi^*(r)=s\}, \quad s>0.
\end{align}
By definition,
\begin{gather*}
	\Psi^*(\Psi^*_{-}(s))=s 
	\quad\text{and}\quad 
	\Psi^*_{-}(\Psi^*(s))\geq s, \quad s>0.
\end{gather*}

\bigskip\noindent
Throughout this paper we use two basic assumptions \eqref{L1} and \eqref{L2}.
\begin{itemize}  
	\item[\bfseries(\namedlabel{L1}{L1})] 
	There exists a constant $C_1>0$ such that
	\begin{gather*}
		\int_{\Rd} e^{-t\Psi(\xi)}\, d\xi \leq C_1 \left(\Psi^*_{-}\left(\tfrac 1t\right)\right)^d,
		\quad t>0.
	\end{gather*}
\end{itemize}
The condition \eqref{L1} ensures that the measures $\mu_t$ are for every $t>0$ absolutely continuous w.r.t.\ Lebesgue measure, $\mu_t(dy) = p_t(y)\,dy$, with a bounded and continuous transition density $p_t$, cf.\ \cite{Knopova-Schilling-2013}. 
Since
\begin{align} \label{eq:sup_pt}
	p_t(x) \leq p_t(0) = (2 \pi)^{-d} \int_{\Rd} e^{-t\Psi(\xi)}\, d\xi, \quad x \in \Rd, \;t>0,
\end{align}
it provides a good control on the suprema of densities. It follows from \cite[Theorem 3.1]{GrzywnySzczypkowski2020} that \eqref{L1} is equivalent to the property that 
\begin{align}\label{eq:scaling}
	\exists \, C_2 \in (0,1),\; \alpha \in (0,2] \::\: \Psi^*(\lambda r)\geq C_2 \lambda^{\alpha}\Psi^*(r),\; \lambda \geq 1,\; r>0.
\end{align}

\medskip
Although $\nu(y)$ is symmetric, it need not be rotationally symmetric. The following assumption will allow us to control $\nu(y)$ by a rotationally symmetric function:\ we assume that there exists a decreasing function $f: (0,\infty) \to (0,\infty)$ and a constant $C_0>0$ such that
\begin{gather*}
	C_0^{-1} f(|x|) \leq \nu(x) \leq C_0 f(|x|), \quad x\in\Rd\setminus \{0\}.
\end{gather*}
We call $f$ the \textbf{profile} of the L\'evy density $\nu$. Profiles are frequently used in the literature to control the behaviour of the L\'evy measure, see e.g.\ \cite{CKW,Finkelshtein,KSV} and further references quoted in this section. 

\bigskip\noindent
We can now formulate the second key assumption:
\begin{itemize}
	\item[\bfseries(\namedlabel{L2}{L2})] 
	The density $\nu$ admits a profile $f$, and we have $K_f(r)\to 0$ as $r \to \infty$, where
	\begin{gather*}
		K_f(r):=\sup_{|x|\geq 1} \frac{\displaystyle\int_{{|y-x|> r,\: |y| > r}} f(|x-y|)f(|y|)\, dy}{f(|x|)}, \quad r \geq 1.
	\end{gather*}
\end{itemize}

\noindent
The function $K_f$ is an important tool in the study of the long-range properties of pure jump L\'evy processes and the corresponding convolution semigroups. It is known that $K_f(1)<\infty$ is a necessary and sufficient condition for the uniform comparability $p_t(x) \asymp t f(|x|)$ for large $x$ and small $t$, see \cite{KaletaSztonyk2017}, while \eqref{L2} describes the directional spatial convergence of $p_t(x)/(t\nu(x))$ as $|x| \to \infty$ \cite{KaletaSztonyk2019, KaletaPonikowski2022}. Typical examples of profiles $f$ satisfying \eqref{L2} can be found in \cite[Lemma 8]{KaletaSztonyk2019}, see also Example \ref{ex:ex1} below.  Some easy-to-check sharp sufficient conditions for \eqref{L2} can also be obtained by an obvious modification of \cite[Lemma 3.2]{KaletaSchilling}. We remark that this condition rules out profile functions $f$ with super-exponential decay at infinity. 

\medskip
The following two-sided estimate, which follows from \cite[Theorem 3]{KaletaSztonyk2017}, will be frequently used: under \eqref{L1} and \eqref{L2} there exists a constant $C_3>0$ such that
\begin{align} \label{eq:small_time}
	C_3^{-1} t f(|x|) \leq p_t(x) 
	\leq C_3 t f(|x|), \quad |x| \geq \frac 12,\; t\in (0,1].
\end{align} 

Our first main result, Theorem \ref{th:subexp}, gives necessary and sufficient conditions answering the question on the asymptotic behaviour of $g_\alpha(x)$ if $|x|\gg 1$. We assume that \eqref{L1} and \eqref{L2} hold. Because of \eqref{eq:small_time} we always have
\begin{align} \label{eq:res_low}
	g_{\alpha}(x) \geq c f(|x|), \quad |x| \geq 1,\; \alpha >0,
\end{align}
with $c=c(\alpha)$, so it is enough to consider the upper bound for $g_{\alpha}(x)$.

\begin{theorem}\label{th:subexp} 
	If \eqref{L1} and \eqref{L2} hold, then the following statements are equivalent.
	\begin{enumerate}
	\item\label{th:subexp-A} 
	The profile $f$ is sub-exponential, i.e.\     
	\begin{equation}\label{eq:sub_exp} 
		\lim_{r\to \infty} \frac{\log f(r)}{r} = 0.
	\end{equation}
	
	\item\label{th:subexp-B}
	For every $\epsilon>0$ there exists a constant $\tilde{C}=\tilde{C}(\epsilon)>0$ such that 
	\begin{equation}\label{eq:doubling_kappaB}
		f(r) \geq \tilde{C} e^{-\epsilon r},\quad r\geq 1.
	\end{equation}
	
	\item\label{th:subexp-C}
	For every $\alpha_0>0$ there exists a constant $C= C(\alpha_0)>0$ such that 
	\begin{equation}\label{eq:doubling_kappaC}
		p_t(x) \leq C e^{\alpha_0 t} f(|x|),\quad |x|\geq 1,\; t>0.
	\end{equation}
	
	\item\label{th:subexp-D}
	For every $\alpha_0>0$ there exists a constant $C= C(\alpha_0)>0$ such that for every $|x|\geq 1$ the function 
	\begin{equation}\label{eq:doubling_kappaD}
		\alpha \mapsto \frac{C}{\alpha - \alpha_0} f(|x|) - g_\alpha(x)
	\end{equation}
	is completely monotone on $(\alpha_0,\infty)$. 

	\item\label{th:subexp-E}
	For every $\alpha_0>0$ there exists a constant $C= C(\alpha_0)>0$ such that for every $\alpha>\alpha_0$ we have
	\begin{equation}\label{eq:doubling_kappaE}
		g_\alpha(x) \leq \frac{C}{\alpha-\alpha_0} f(|x|), \quad |x|\geq 1.
	\end{equation}
	\end{enumerate}
\end{theorem}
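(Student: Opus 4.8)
Here is the route I would take, establishing the cycle (a)$\Rightarrow$(b)$\Rightarrow$(c)$\Rightarrow$(d)$\Rightarrow$(e)$\Rightarrow$(a).

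\medskip
\emph{The soft implications.} Equivalence (a)$\Leftrightarrow$(b) is elementary real analysis: since $f$ is decreasing, $\log f(r)/r\le\log f(1)/r\to0$, so $\limsup_{r\to\infty}\log f(r)/r\le0$ automatically, and \eqref{eq:sub_exp} is then equivalent to $\liminf_{r\to\infty}\log f(r)/r\ge0$, which is exactly \eqref{eq:doubling_kappaB} (on a bounded interval $[1,R_\epsilon]$ one bounds $f$ below by $f(R_\epsilon)>0$). Equivalence (c)$\Leftrightarrow$(d) I would get from the Bernstein--Widder theorem together with the elementary identity
\begin{equation*}
	\frac{C}{\alpha-\alpha_0}\,f(|x|)-g_\alpha(x)=\int_0^\infty e^{-(\alpha-\alpha_0)t}\bigl(Cf(|x|)-e^{-\alpha_0 t}p_t(x)\bigr)\,dt,\qquad \alpha>\alpha_0 ,
\end{equation*}
which holds because $g_\alpha(x)=\int_0^\infty e^{-\alpha t}p_t(x)\,dt$ and because the integrals converge for $|x|\ge1$ by \eqref{eq:small_time}. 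By uniqueness of the Laplace transform the left-hand side is completely monotone on $(\alpha_0,\infty)$ if and only if $Cf(|x|)-e^{-\alpha_0 t}p_t(x)\ge0$ for a.e.\ $t>0$, i.e.\ (by continuity in $t$) exactly \eqref{eq:doubling_kappaC}. Finally (d)$\Rightarrow$(e) is immediate since a completely monotone function is nonnegative. Hence everything reduces to (b)$\Rightarrow$(c) and (e)$\Rightarrow$(a).

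\medskip
\emph{(b)$\Rightarrow$(c).} This is the analytic heart. For $t\in(0,1]$ the bound \eqref{eq:doubling_kappaC} already follows from \eqref{eq:small_time} since $e^{\alpha_0 t}\ge1$, so only $t\ge1$ has to be treated. I would first record the local regularity $f(r-1)\le\kappa f(r)$ for all large $r$: writing $p_1=p_{1/2}*p_{1/2}$, bounding $p_1(x)\ge\bigl(\inf_{\overline B_{1/2}}p_{1/2}\bigr)\,\mu_{1/2}(B_{1/2}(x))$ from below, and using \eqref{eq:small_time} on both sides (positivity of $p_{1/2}$ on $\overline B_{1/2}$ following from $p_{1/2}(0)>0$, continuity and the semigroup property). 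Then split the process into its large jumps (those of size $\ge\rho$, a compound Poisson process of finite rate $\lambda_\rho=\nu(\{|y|\ge\rho\})$) and the remainder $X^\rho$ with jumps $<\rho$, giving $p_t=\sum_{k\ge0}e^{-\lambda_\rho t}\frac{(\lambda_\rho t)^k}{k!}\,\eta_\rho^{*k}*p^\rho_t$ with $\eta_\rho=\nu|_{\{|y|\ge\rho\}}/\lambda_\rho$. Since $X^\rho$ has bounded jumps, a contour shift $\xi\mapsto\xi-i\theta\hat x$ in the Fourier inversion, together with \eqref{L1}, yields $p^\rho_t(x)\le C e^{-\theta|x|}e^{\omega_\rho(\theta)t}$ for $t\ge1$ with $\omega_\rho(\theta)\to0$ as $\theta\to0$; combined with \eqref{eq:doubling_kappaB} (used to replace $e^{-\theta|x|}$ by $\lesssim f(|x|)$) this handles the $k=0$ term. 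For $k\ge1$ one proves by induction, using $\nu\le C_0 f(|\cdot|)$, the inequality $f(r-1)\le\kappa f(r)$, boundedness of $\|(f\I_{\{|\cdot|\ge\rho\}})^{*k}\|_\infty$, and the definition of $K_f$, that $\bigl(f\I_{\{|\cdot|\ge\rho\}}\bigr)^{*k}(z)\le A_\rho^{\,k-1}f(|z|)$ for $|z|\ge1$; convolving with $p^\rho_t$ and summing the Poisson series then gives $p_t(x)\le C(\rho,\theta)\,e^{(\lambda_\rho+C_0A_\rho+\omega_\rho(\theta))t}f(|x|)$ for $|x|\ge1$, $t\ge1$. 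The crucial point is that $K_f(\rho)\to0$ by \eqref{L2}, while also $\int_{\{|y|\ge\rho\}}f\to0$, $f(\rho)\to0$ and $\lambda_\rho\to0$ as $\rho\to\infty$, hence $A_\rho\to0$; so, given $\alpha_0>0$, one first chooses $\rho$ so large that $\lambda_\rho+C_0A_\rho\le\alpha_0/2$ and then $\theta$ so small that $\omega_\rho(\theta)\le\alpha_0/2$.

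\medskip
\emph{(e)$\Rightarrow$(a).} Here I would argue by contraposition. If \eqref{eq:sub_exp} fails then $\liminf_{r\to\infty}\log f(r)/r=-L'<0$, so $f(r_k)\le e^{-\epsilon_0 r_k}$ along some $r_k\to\infty$ with $0<\epsilon_0<L'$ arbitrarily close to $L'$. One then needs a lower bound for $g_\alpha$ along $x_k$ with $|x_k|=r_k$ that beats $f$: build lower bounds for $p_t(x)$ at time scales $t\asymp|x|$ by chaining \eqref{eq:small_time} along a straight path, and, in the genuinely exponential regime where the crude chaining is not sharp enough, sharpen this to a Cramér/large--deviation type bound $p_t(x)\gtrsim e^{-c_1|x|+c_2 t}$ (valid for $t$ up to a small multiple of $|x|$, using the exponential moments of the bounded--jump part $X^\rho$). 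Optimising the weight $e^{-\alpha t}$ in $g_\alpha(x)=\int_0^\infty e^{-\alpha t}p_t(x)\,dt$ over $t$ then gives, for small $\alpha$, $g_\alpha(x)\gtrsim e^{-(L-\eta)|x|}$ with $\eta>0$, which together with $f(r_k)\le e^{-\epsilon_0 r_k}$ contradicts \eqref{eq:doubling_kappaE}.

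\medskip
I expect (b)$\Rightarrow$(c) to be the main obstacle: keeping all constants uniform in $t$ and $x$ through the Poisson series and the convolutions with $p^\rho_t$ is delicate, as is the $\rho\to\infty$, $\theta\to0$ limit procedure. A close second is the sharp lower bound needed in (e)$\Rightarrow$(a) --- it must decay strictly more slowly than $f$ in the exponential regime, which is precisely the mechanism behind $m_\alpha<m$ in the relativistic example \eqref{eq:est_CMS}.
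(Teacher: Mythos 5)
Your overall circuit a~$\Rightarrow$~b~$\Rightarrow$~c~$\Rightarrow$~d~$\Rightarrow$~e~$\Rightarrow$~a differs from the paper's in how the cycle is closed. The implications a~$\Leftrightarrow$~b, c~$\Rightarrow$~d (via the Laplace-transform identity and Bernstein--Widder), d~$\Rightarrow$~e, and your b~$\Rightarrow$~c sketch are essentially the paper's Lemmas~4.1, 4.4, 4.2: the decomposition into a bounded-jump part and a compound Poisson tail, the Knopova--Schilling exponential tilt of the small-jump density and the control of the $n$-fold convolutions through $K_f$ are exactly what happens there. (One small glitch: your induction claim $(f\I_{\{|\cdot|\ge\rho\}})^{*k}\le A_\rho^{k-1}f$ does not close without an extra factor $k$, coming from the $|y|\le\rho$ region of the convolution integral; the paper's Lemma~\ref{convolutions} carries the factor $n$, which is harmless in the Poisson sum.)

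The genuine gap is in your step e~$\Rightarrow$~a. You propose to rule out condition~a by a contrapositive using a Cram\'er/large-deviation lower bound $p_t(x)\gtrsim e^{-c_1|x|+c_2 t}$ at time scales $t\asymp|x|$, so as to beat the assumed exponential decay of $f$ along a subsequence. Such a sharp lower bound is not available as a black box under only \eqref{L1}--\eqref{L2}. In the paper it is precisely the content of Lemma~\ref{lem:exp_1_lower} in the exponential Section~\ref{sec:exp}, where it is established only for \emph{radial} L\'evy measures and with a profile already assumed to have the structured form \eqref{eq:exp_prof}--\eqref{eq:g_sub_exp}; proving it in the generality you would need here (arbitrary non-subexponential $f$, non-radial $\nu$) is not sketched anywhere and would be a substantial piece of new analysis, effectively harder than the theorem itself. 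The paper avoids this entirely by closing the cycle the other way: Lemma~\ref{l-45} proves e~$\Rightarrow$~c directly from the elementary averaging inequality
\begin{equation*}
   g_{\alpha_0}(x)\ \ge\ e^{-\alpha_0(t+1)}\Bigl(\inf_{|y|\le 1/2}p_t(x-y)\Bigr)\int_{|y|\le 1/2}\int_0^1 p_s(y)\,ds\,dy,
\end{equation*}
combined with the parabolic Harnack-type Lemma~\ref{pxy}; and Lemma~\ref{l-43} proves c~$\Rightarrow$~b by plugging the Gaussian-type on-diagonal lower bound $p_t(x)\ge c_1\Psi^*_-(1/t)^d e^{-c_2|x|^2/t}$ (from Sztonyk's \cite[Theorem 1.1]{Sztonyk2017}, which needs only \eqref{L1}) into~c at the optimally chosen time $t=4c_2|x|/\epsilon$ and $\alpha_0=\epsilon^2/(16c_2)$. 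That route requires no lower bound on $g_\alpha$ at all; you should adopt it, or explicitly develop your large-deviation lower bound, which is the single missing ingredient in your proposal.
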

The condition \ref{th:subexp-C} in Theorem~\ref{th:subexp} indicates that the decay of $g_\alpha(x)$, $|x|\to\infty$, depends on the growth/decay of the heat kernel $p_t(x)$ in both $t$ and $x$. 

Theorem~\ref{th:subexp} settles the case of subexponential profiles $f$. Our next goal is to understand the behaviour of $g_{\alpha}(x)$ for exponentially decaying L\'evy measures. We consider profile functions that combine an exponential and a lower-order term, i.e. 
\begin{gather} \label{eq:exp_prof}
	f(r) := \exp(-\kappa r) h(r), \quad r > 0,
\intertext{where $\kappa>0$ and $h:(0,\infty) \to (0,\infty)$ is a decreasing function such that the map} 
\label{eq:g_sub_exp}
	r \mapsto \frac{\log h(r)}{r} 
	\text{\ \ is eventually increasing and\ \ }
	\lim_{r\to \infty} \frac{\log h(r)}{r} = 0.
\end{gather}
This means that $h$ is sub-exponential in the sense of \ref{th:subexp}.\ref{th:subexp-A}.\footnote{By \ref{th:subexp}.\ref{th:subexp-A}, \ref{th:subexp}.\ref{th:subexp-B} etc.\ we refer to the statements \ref{th:subexp-A}, \ref{th:subexp-B} etc.\ of Theorem \ref{th:subexp}.} We note that the relativistic case mentioned above belongs in this framework.

Let $\xi \in \Rd$ be such that the following integral is finite:
\begin{gather*}
	\omega(\xi) = \int_{\Rd\setminus\{0\}} \left(\cosh (\scalp{\xi}{y}) - 1\right) \nu(dy).
\end{gather*}
Using the elementary relation $\cosh(i\theta) = \cos(\theta)$ it is easy to see that $\omega(\xi) = - \Psi(\frac 1i \xi)$ holds for all $\xi \in \Rd$ such that $\Psi$ can be (analytically) extended onto the strip $\Rd + iU$, where $U = \left\{\eta \in \Rd: \int_{|y|>1} e^{\eta \cdot y}\, \nu(dy) < \infty\right\}$. In fact, the moment condition appearing in the definition of $U$ is a necessary and sufficient condition for the extendability of $\Psi$ into $\Comp^d$, see the discussion in Berger \emph{et al.}~\cite[Section 4]{Berger-Schilling-Shargorodsky}. This is also the condition that ensures the finiteness of $\omega(\xi)$. Lemma \ref{lem:exp_mom} below shows that, in the present setting, where the profile of the L\'evy density satisfies \eqref{eq:exp_prof}--\eqref{eq:g_sub_exp}, the assumption \eqref{L2} allows us to take $U = \left\{\eta \in \Rd: |\eta| \leq \kappa \right\}$. 

For a fixed $\theta \in \sphere^{d-1}$ we consider the function $s\mapsto \omega(s\theta)$. It is easy to see that $s\mapsto\omega(s\theta)$ is strictly convex on the set $[-\kappa,\kappa]$ 
with a unique minimum at $s=0$, hence it is strictly decreasing on $(-\kappa,0)$ and strictly increasing on $(0,\kappa)$. Therefore, the following inverse function is well-defined: 
\begin{gather*}
\gamma_{\alpha}(\theta) := 
\begin{cases}
	\kappa,\ & \alpha > \omega(\kappa \theta),\\
	\omega(\tinybullet\theta)^{-1}(\alpha),\ & 0 < \alpha \leq \omega(\kappa \theta).
\end{cases}
\end{gather*} 
It is clear that $\alpha \mapsto \gamma_{\alpha}(\theta)$ is a continuous function. For a radial L\'evy measure $\nu$, the function $\omega$ is again radial, so that $\gamma_{\alpha}(\theta)\equiv \gamma_{\alpha}$ does not depend on $\theta \in \sphere^{d-1}$.

In Theorem \ref{th:exp} below, our second main result, we obtain exponential estimates: we construct the directional upper bound for general densities, and two-sided estimates for radial densities. 

\begin{theorem}[sharp transition in the exponential rate] \label{th:exp} 
	Let the profile $f$ be of the form \eqref{eq:exp_prof}--\eqref{eq:g_sub_exp}, and assume \eqref{L1} and \eqref{L2}. Then the following assertions hold. 
	\begin{enumerate}
	\item\label{th:exp-1} 
	\textup{(Directional upper bounds for general L\'evy measures)}
	There exists a constant $C>0$ such that for every $\alpha_0 > 0$ we have
	\begin{gather*}
		p_t(r\theta) 
		\leq C e^{\alpha_0 t} \exp\left(-\gamma_{\alpha_0}(\theta)\, r\right), 
		\quad r \geq 1,\; \theta \in \sphere^{d-1},\; t>0,
	\intertext{and}
		g_{\alpha}(r\theta) 
		\leq \frac{C}{\alpha-\alpha_0} \exp\left(-\gamma_{\alpha_0}(\theta)\, r\right), 
		\quad r \geq 1,\; \theta \in \sphere^{d-1},\; \alpha > \alpha_0.
	\end{gather*} 
	
	\item\label{th:exp-2} 
	\textup{(Two-sided bound for radial L\'evy measures)} 
	Assume, in addition, that the L\'evy measure $\nu$ is radial, i.e.\ $\nu(x)=\nu(|x|)$. Let $\alpha >0$. Then for every $\epsilon >0$ there exists a constant $\widetilde C= \widetilde C(\epsilon,\alpha)>1$ such that 
	\begin{gather*}
		\widetilde C^{-1} e^{-(\gamma_{\alpha}+\epsilon)|x|} 
		\leq g_{\alpha}(x) \leq \widetilde C e^{-(\gamma_{\alpha}-\epsilon)|x|}, 
		\quad |x| \geq 1.
	\end{gather*}
	In particular, the transition in the decay rate depending on the position of $\alpha$ with respect to $\omega^*(\kappa):= \sup_{\theta \in \sphere^{d-1}} \omega(\kappa \theta)$ is sharp.\footnote{$\omega^*(\kappa)$ is well-defined both in the radial and non-radial setting. In the radial setting $\omega(\kappa \theta)$ does not depend on $\theta \in \sphere^{d-1}$.}
	\end{enumerate}
\end{theorem}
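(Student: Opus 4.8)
The plan is to derive everything — both the directional upper bounds of \ref{th:exp-1} and the two‑sided radial estimate of \ref{th:exp-2} — from the exponential tilt (ground‑state transform) together with the small‑time bound \eqref{eq:small_time}. For $\eta\in\Rd$ with $|\eta|\le\kappa$, which by Lemma~\ref{lem:exp_mom} is exactly the range in which $\omega(\eta)<\infty$, set $p_t^{(\eta)}(y):=e^{\scalp{\eta}{y}-t\omega(\eta)}p_t(y)$; since $\int_{\Rd}e^{\scalp{\eta}{y}}p_t(y)\,dy=e^{t\omega(\eta)}$, this $p_t^{(\eta)}$ is the transition density of the exponentially tilted L\'evy process (equivalently, the kernel of the ground‑state transform of $\{P_t\}$ by the $L$‑eigenfunction $e^{\scalp{\eta}{\tinybullet}}$). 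Extending $\Psi$ to the strip $\Rd+i\{|\cdot|\le\kappa\}$ one has $\Fourier p_t^{(\eta)}(\xi)=e^{-t\omega(\eta)-t\Psi(\xi-i\eta)}$, and the symmetry of $\nu$ gives $\Re\bigl(\omega(\eta)+\Psi(\xi-i\eta)\bigr)=\int_{\Rd\setminus\{0\}}(1-\cos\scalp{\xi}{y})\cosh\scalp{\eta}{y}\,\nu(dy)\ge\Psi(\xi)$, so $|\Fourier p_t^{(\eta)}(\xi)|\le e^{-t\Psi(\xi)}$. By Fourier inversion and \eqref{eq:sup_pt}, $p_t^{(\eta)}(y)\le p_t(0)$ for all $y$, and since $t\mapsto p_t(0)$ is non‑increasing, $p_t^{(\eta)}(y)\le p_1(0)$ for $t\ge1$. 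To prove \ref{th:exp-1} I fix $\theta\in\sphere^{d-1}$ and $\alpha_0>0$ and take $\eta:=\gamma_{\alpha_0}(\theta)\theta$: then $|\eta|\le\kappa$ and, straight from the definition of $\gamma_{\alpha_0}(\theta)$, $\omega(\eta)\le\alpha_0$. Writing $p_t(r\theta)=e^{t\omega(\eta)-\gamma_{\alpha_0}(\theta)r}p_t^{(\eta)}(r\theta)$, the previous bound disposes of $t\ge1$, while \eqref{eq:small_time} together with $h(r)\le h(1)$ and $\gamma_{\alpha_0}(\theta)\le\kappa$ disposes of $0<t\le1$; this yields the heat‑kernel estimate with $C=\max\{p_1(0),C_3h(1)\}$ (note this $C$ does not depend on $\alpha_0$), and multiplying by $e^{-\alpha t}$ and integrating over $t>0$ gives the resolvent estimate. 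The upper bound in \ref{th:exp-2} is then an immediate corollary: in the radial case $\gamma_{\alpha_0}(\theta)\equiv\gamma_{\alpha_0}$, and by continuity of $\alpha_0\mapsto\gamma_{\alpha_0}$ I may, given $\epsilon>0$, choose $\alpha_0\in(0,\alpha)$ with $\gamma_{\alpha_0}\ge\gamma_\alpha-\epsilon$.

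For the lower bound in \ref{th:exp-2} I split according to the position of $\alpha$. If $\alpha\ge\omega^*(\kappa)$, so that $\gamma_\alpha=\kappa$, then \eqref{eq:small_time} already gives $g_\alpha(x)\ge\int_0^1e^{-\alpha t}p_t(x)\,dt\ge c(\alpha)f(|x|)=c(\alpha)e^{-\kappa|x|}h(|x|)$ for $|x|\ge1$; since $h$ is sub‑exponential (see \eqref{eq:g_sub_exp}), $h(r)\ge e^{-\epsilon r}$ for all large $r$, hence $g_\alpha(x)\ge c'e^{-(\gamma_\alpha+\epsilon)|x|}$ for large $|x|$, the bounded annulus being covered by \eqref{eq:res_low}.

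The substantive case is $0<\alpha<\omega^*(\kappa)$, where $\gamma_\alpha\in(0,\kappa)$ and $\omega(\gamma_\alpha\theta)=\alpha$ for every $\theta$. Using radiality I take $x=R\theta$, $R=|x|\ge1$, and $\eta:=\gamma_\alpha\theta$; then $\omega(\eta)=\alpha$, so
\[
 g_\alpha(R\theta)=\int_0^\infty e^{-\alpha t}p_t(R\theta)\,dt=e^{-\gamma_\alpha R}\int_0^\infty p_t^{(\eta)}(R\theta)\,dt ,
\]
and it suffices to bound the tilted potential density $\int_0^\infty p_t^{(\eta)}(R\theta)\,dt$ from below by a factor $e^{-o(R)}$ — precisely the loss the $\epsilon$ in the statement permits. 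The tilted process has drift $b=\nabla\omega(\eta)=c_\alpha\theta$, where $c_\alpha>0$ is the derivative at $\gamma_\alpha$ of the strictly increasing, strictly convex map $s\mapsto\omega(s\theta)$, and a finite, non‑degenerate covariance $\nabla^2\omega(\eta)$ (finiteness uses $\gamma_\alpha<\kappa$; non‑degeneracy uses that $\nu$ has a strictly positive density). A local limit theorem for this L\'evy process — obtained by the usual Laplace expansion of $\Fourier p_t^{(\eta)}$ near $\xi=0$, the region away from $0$ being controlled through the lower scaling \eqref{eq:scaling} and the integrability of $e^{-\Psi}$ — yields $p_t^{(\eta)}(y)\ge c\,t^{-d/2}$ uniformly for $|y-tb|\le|b|$ and all large $t$. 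Applying this on the window $t\in[R/c_\alpha,\,R/c_\alpha+1]$, on which $|R\theta-tb|\le|b|$, gives $\int_0^\infty p_t^{(\eta)}(R\theta)\,dt\gtrsim R^{-d/2}$, so $g_\alpha(R\theta)\gtrsim R^{-d/2}e^{-\gamma_\alpha R}\ge\widetilde C^{-1}e^{-(\gamma_\alpha+\epsilon)R}$ for large $R$; small $R$ are again handled by \eqref{eq:res_low}. Since $\gamma_\alpha<\kappa$ exactly when $\alpha<\omega^*(\kappa)$, combining the upper and lower bounds exhibits the asserted sharp transition at $\omega^*(\kappa)$.

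I expect the one genuinely delicate ingredient to be this near‑diagonal lower bound $p_t^{(\eta)}(tb+w)\gtrsim t^{-d/2}$ for the transformed process at large times: one must verify that the tilt preserves the hypotheses needed for it — bounded continuous transition densities, finite second moments (this is where $\gamma_\alpha<\kappa$ enters), and the decay of the characteristic function away from the origin inherited from \eqref{eq:scaling} — and that the estimate is uniform over the $O(1)$ range of times and over $|w|\le|b|$. All the rest is bookkeeping around the change of measure, \eqref{eq:small_time} and \eqref{eq:res_low}.
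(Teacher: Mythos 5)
Your route is genuinely different from the paper's, and it is worth comparing. For the upper bounds (part \ref{th:exp-1} and the upper half of \ref{th:exp-2}) you reprove the key inequality $p_t(x)\leq p_t(0)\,e^{-\scalp{\eta}{x}+t\omega(\eta)}$ directly from the Fourier side, by tilting to $p_t^{(\eta)}$ and showing $|\Fourier p_t^{(\eta)}(\xi)|\leq e^{-t\Psi(\xi)}$ via $\Re\big(\omega(\eta)+\Psi(\xi-i\eta)\big)\geq\Psi(\xi)$; the paper instead reaches the same inequality (Theorem~\ref{th:gen_exp}) through the Knopova--Schilling decomposition of $\nu$ into small and large jumps, controlling each piece separately. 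Your derivation is shorter and more transparent, provided one is a bit careful about justifying the analytic continuation of $\xi\mapsto e^{-t\Psi(\xi)}$ to the strip; the rest of the bookkeeping (taking $\eta=\gamma_{\alpha_0}(\theta)\theta$, using $\omega(\eta)\leq\alpha_0$ and monotonicity of $t\mapsto p_t(0)$, and treating $t\leq 1$ via \eqref{eq:small_time}) is exactly the paper's Lemma~\ref{lem:exp_1}, so here the two approaches converge to the same bound.

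The real divergence is in the lower bound of \ref{th:exp-2}, where you tilt by $\eta=\gamma_\alpha\theta$ (so $\omega(\eta)=\alpha$) and appeal to a near-diagonal local limit theorem $p_t^{(\eta)}(tb+w)\gtrsim t^{-d/2}$ for the drifted, finite-variance tilted process on a time window of length $O(1)$. The paper (Lemma~\ref{lem:exp_1_lower}) proceeds very differently: it exploits the exact moment identity $\int e^{\gamma_{\alpha_0}\scalp{\theta}{y}}p_t(y)\,dy=e^{t\omega(\gamma_{\alpha_0}\theta)}$, splits the integral into three annuli, bounds the outer two with Theorem~\ref{th:gen_exp} and a perturbation $\gamma_{\alpha_0}\mp\sigma(|x|)$, and extracts the middle one through repeated use of Lemma~\ref{pxy}; this is more elementary but requires the careful $\delta$/$\sigma$ bookkeeping you see there, and it yields a loss of the form $e^{\epsilon|x|}$ rather than a polynomial. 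Your sketch is conceptually cleaner and would even give a slightly sharper (polynomial) loss. However, the local limit theorem you invoke -- a pointwise density lower bound, uniform over $|w|\leq|b|$ and $t$ large, for the tilted L\'evy process with drift $b=\nabla\omega(\eta)$, covariance $\nabla^2\omega(\eta)$, and the decay of $|\Fourier p_t^{(\eta)}|\le e^{-t\Psi}$ away from the origin -- is asserted, not established. Making it rigorous means verifying that $|\eta|=\gamma_\alpha<\kappa$ gives enough moments to control the remainder in the Edgeworth/Laplace expansion, that the middle Fourier regime $t^{-1/2+\nu}\lesssim|\xi|\lesssim 1$ contributes $o(t^{-d/2})$ using the quadratic lower bound $\Re(\omega(\eta)+\Psi(\xi-i\eta))\gtrsim|\xi|^2$ near $0$, and that $|\xi|\gtrsim 1$ is killed by \eqref{eq:scaling} and $\int e^{-\Psi}<\infty$. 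None of this is in the paper (which would have to prove it as a standalone lemma), and none of it is done in your proposal. Until that lemma is supplied, your lower-bound argument has a genuine gap, even though the overall strategy is sound.
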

Theorem \ref{th:exp}.\ref{th:exp-2} allows us to understand the intricate structure of the decay rate in the exponential case; in particular, there is a sharp continuous transition, depending on the position of $\alpha$ with respect to some explicit threshold parameter $\omega^*(\kappa)$ (this is the role of $m$ in \eqref{eq:est_CMS}). In Section \ref{sec:applications} we provide a general interpretation of $\omega^*(\kappa)$, which will help to understand what is going on in \eqref{eq:est_CMS}.

The proof of the above result uses in an essential way an observation which is stated as Theorem~\ref{th:gen_exp}. This theorem gives a general upper bound for the density of a symmetric convolution semigroup that has \textbf{some} finite exponential moment. It extends the result by Knopova and Schilling \cite[Theorem 6]{KnopovaSchilling2012}, which provides such a bound for measures having \textbf{all} exponential moments finite. In other words, we extend this bound to exponentially localized L\'evy measures. 

Our third main result improves the estimates in Theorem \ref{th:exp}.\ref{th:exp-1} if $\alpha_0 > \omega^*(\kappa)$ and the two-sided bound in Theorem \ref{th:exp}.\ref{th:exp-2} if $\alpha > \omega^*(\kappa)$.

\begin{theorem}[sharp radial estimates above threshold for general L\'evy measures]\label{th:exp_sharp} 
	Let the profile $f$ be of the form \eqref{eq:exp_prof}--\eqref{eq:g_sub_exp}, and assume \eqref{L1} and \eqref{L2}. For every $\alpha_0 > \omega^*(\kappa)$ there exist constants $C=C(\alpha_0)>0$ and $\rho=\rho(\alpha_0)$ such that
	\begin{gather*}
		p_t(x) \leq C e^{\alpha_0 t} f(|x|), \quad |x| \geq \rho,\; t>0,
	\intertext{and}
		g_{\alpha}(x) \leq \frac{C}{\alpha-\alpha_0} f(|x|), \quad |x|\geq \rho,\; \alpha > \alpha_0.
	\end{gather*} 
	In particular, for every $\alpha > \omega^*(\kappa)$ there exists $\widetilde C= \widetilde C(\alpha)$ such that $g_{\alpha}(x) \casymp{\widetilde C} f(|x|)$ for $|x|\geq \rho$.
\end{theorem}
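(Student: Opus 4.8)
\emph{Reduction.} It suffices to establish the heat-kernel bound
\begin{equation}\label{eq:hk-goal}
	p_t(x) \le C e^{\alpha_0 t} f(|x|), \quad |x| \ge \rho,\ t > 0;
\end{equation}
integrating against $e^{-\alpha t}$ then gives $g_\alpha(x) = \int_0^\infty e^{-\alpha t} p_t(x)\,dt \le C f(|x|)\int_0^\infty e^{-(\alpha-\alpha_0)t}\,dt = \tfrac{C}{\alpha-\alpha_0}f(|x|)$ for $\alpha > \alpha_0$, and the concluding two-sided relation follows by taking any $\alpha_0 \in (\omega^*(\kappa),\alpha)$ in \eqref{eq:hk-goal} and combining it with the lower bound \eqref{eq:res_low}. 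By \eqref{eq:small_time} the estimate \eqref{eq:hk-goal} is already clear for $t \in (0,1]$, so only $t > 1$ is at stake. Fix $\alpha_0 > \omega^*(\kappa)$ and set $\delta := \alpha_0 - \omega^*(\kappa) > 0$.

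\emph{Step 1 (exponential tilt).} Given $x = r\theta$ with $\theta \in \sphere^{d-1}$, we remove the exponential factor $e^{-\kappa r}$ by an Esscher transform in the direction $\theta$. By Lemma \ref{lem:exp_mom} the hypotheses \eqref{eq:exp_prof}--\eqref{eq:g_sub_exp} together with \eqref{L2} give $U = \{\,|\eta| \le \kappa\,\}$, so that $e^{t\omega(\kappa\theta)} = \int e^{\kappa\theta\cdot y}\,\mu_t(dy) < \infty$ and $\omega(\kappa\theta) \le \omega^*(\kappa)$. Hence $\mu_t^\theta(dy) := e^{-t\omega(\kappa\theta)}e^{\kappa\theta\cdot y}\mu_t(dy)$ is again a vaguely continuous convolution semigroup of probability measures, with density $q_t^\theta(y) = e^{-t\omega(\kappa\theta)}e^{\kappa\theta\cdot y}p_t(y)$. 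Since $p_t(x) = e^{t\omega(\kappa\theta)}e^{-\kappa r}q_t^\theta(x) \le e^{t\omega^*(\kappa)}e^{-\kappa r}q_t^\theta(x)$, it is enough to prove, with constants independent of $\theta$,
\begin{equation}\label{eq:tilt-goal}
	q_t^\theta(y) \le C e^{\delta t}\, h(|y|), \quad |y| \ge \rho,\ t > 0,
\end{equation}
and then apply \eqref{eq:tilt-goal} with $\theta = x/|x|$ to get \eqref{eq:hk-goal} with the correct constant $C e^{(\omega^*(\kappa)+\delta)t} = Ce^{\alpha_0 t}$.

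\emph{Step 2 (the two inputs and the iteration).} Two uniform-in-$\theta$ bounds on $q_t^\theta$ feed the proof of \eqref{eq:tilt-goal}. First, \eqref{eq:small_time} together with $\theta\cdot y \le |y|$ gives the sharp short-time bound $q_t^\theta(y) \le C_3\, t\, h(|y|)$ for $|y| \ge \tfrac12$, $t \in (0,1]$; this furnishes the base case $t \le 1$ of an induction over $t$ and a ``source'' term at each step. Second, choosing $\beta \in (\omega^*(\kappa),\alpha_0)$ and invoking Theorem \ref{th:exp}.\ref{th:exp-1} (here $\gamma_\beta(\theta) = \kappa$ for every $\theta$, since $\beta > \omega^*(\kappa) \ge \omega(\kappa\theta)$) yields $p_t(y) \le C_\beta e^{\beta t}e^{-\kappa|y|}$, hence the crude uniform bound $q_t^\theta(y) \le C_\beta e^{(\beta-\omega(\kappa\theta))t}$ for $|y|\ge1$, whose exponential growth rate is strictly below the rate $\alpha_0 - \omega(\kappa\theta)$ admissible in \eqref{eq:tilt-goal}. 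To promote the uniform bound to one carrying the decay factor $h(|y|)$ one iterates Chapman--Kolmogorov with a short last step: for $t' > 1$, $u \in (0,1]$ small and $|y| = r$ large, split $q_{t'}^\theta(y) = \int q_{t'-u}^\theta(z)\, q_u^\theta(y-z)\,dz$ over $\{|y-z| \le M\}$, $\{|z| \le M\}$, $\{|z| > M,\ |y-z| > M\}$, with $M = M(\alpha_0)$ a large fixed radius. On the first region $|z| \ge r-M$, so the inductive bound controls $q_{t'-u}^\theta(z)$ while $\int_{|w|\le M}q_u^\theta(w)\,dw \le 1$; on the second $|y-z| \ge r-M$ and the sharp short-time bound controls $q_u^\theta(y-z)$ by $C_3 u\, h(|y-z|)$ while $\int_{|z|\le M}q_{t'-u}^\theta(z)\,dz \le 1$. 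Since $h$ is sub-exponential in the sense of \eqref{eq:g_sub_exp}, one has $h(r-M)/h(r) \to 1$ as $r\to\infty$, so (after taking $\rho$ large depending on $M$ and $u$) the first two regions contribute at most $(1+o(1))\big(\psi(t'-u) + C_3 u\big)h(r)$, $\psi$ denoting the running constant; a Grönwall-type iteration over the $\sim t'/u$ steps then yields $\psi(t') \le Ce^{\delta t'}$, \emph{provided} the third region contributes at most $C_3 u\, \eta_M\, \psi(t'-u)\, h(r)$ with $\eta_M \to 0$ as $M\to\infty$.

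\emph{Main obstacle.} The crux is exactly this third, ``off-axis'' region. The naive bound $q_{t'-u}^\theta(z)q_u^\theta(y-z) \le C_3 u\,\psi(t'-u)\,h(|z|)h(|y-z|)$ is too lossy: the tilted L\'evy measure $e^{\kappa\theta\cdot y}\nu(y)$ is \emph{not} radial -- it is sub-exponential (comparable to $h$) only along the single direction $\theta$ and decays exponentially transversally -- and the smallness of $K_f(M)$ for $f$ does not pass to smallness of $\int_{|z|>M,\,|y-z|>M}h(|z|)h(|y-z|)\,dz \big/ h(|y|)$. One must therefore retain the anisotropic decay of $q^\theta$ in directions transverse to $\theta$ (heuristically Gaussian on scale $\sqrt{r/\kappa}$): either prove a \emph{directional} refinement of \eqref{eq:small_time} for the tilted semigroup and carry a directional inductive hypothesis, or -- exploiting that \eqref{eq:tilt-goal} is needed only for $r$ large -- argue directly in the large-deviation regime where $|y|/t$ is moderate and the surplus $\delta = \alpha_0 - \omega^*(\kappa) > 0$ exactly offsets the cost of a straight-line path from $0$ to $y$ (which is precisely why the radius $\rho$ must be allowed to depend on $\alpha_0$). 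Once this anisotropic control of the middle/off-axis contribution is in place the induction closes, \eqref{eq:tilt-goal} follows, and with it \eqref{eq:hk-goal} and the theorem.
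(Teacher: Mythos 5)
Your reduction (Step~1, the Esscher/exponential tilt) is sound, and so are the two ``inputs'' in Step~2: the sharp short-time bound from \eqref{eq:small_time} transfers to $q_t^\theta$ because $\theta\cdot y\le|y|$, and Theorem~\ref{th:exp}.\ref{th:exp-1} does give the crude uniform bound $q_t^\theta(y)\le C_\beta e^{(\beta-\omega(\kappa\theta))t}$ for any $\beta\in(\omega^*(\kappa),\alpha_0)$. But the argument you present does not establish the theorem. As you say yourself, the Chapman--Kolmogorov iteration closes only \emph{provided} the third (off-axis) region contributes at most $C_3\,u\,\eta_M\,\psi(t'-u)\,h(r)$ with $\eta_M\to0$ as $M\to\infty$, and you then explain exactly why that does not follow from the bounds at hand: the tilted L\'evy density $e^{\kappa\theta\cdot y}\nu(y)$ is comparable to $h$ only along $\theta$ and decays exponentially in directions transverse to $\theta$, so $K_f(M)\to0$ for the radial profile $f$ does not yield the smallness you need for the anisotropic object. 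You flag two possible remedies -- a directional refinement of \eqref{eq:small_time} for the tilted kernel, or a direct large-deviation argument -- but you supply neither; as written the induction is not closed, the heat-kernel bound is unproved, and hence so is the theorem.

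The paper avoids the anisotropy issue rather than confronting it. It splits the L\'evy measure into small- and large-jump parts via \eqref{eq:decomp} \emph{before} tilting, applies the Knopova--Schilling exponential bound \eqref{eq:small_est} only to the small-jump kernel $\psj_t^r$ (which has all exponential moments, uniformly), and treats the large-jump convolution with Lemma~\ref{convolutions}, $K_f(1)<\infty$ and \eqref{eq:comp}. Crucially the tilt vector is chosen pointwise, $\xi_1=a(x)\frac{x}{|x|}$ with $a(x)=\kappa-\frac{\log h(|x|)}{|x|}$ (and an analogous $\xi_2=b(x,y)\frac{x-y}{|x-y|}$), so that $e^{-\scalp{\xi_1}{x}}=f(|x|)$ exactly; the sub-exponential correction $h$ is absorbed in the tilt rather than recovered by iteration. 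The quantities $\delta_R$ and $\omega_r^*(\kappa,R)$ measure the resulting perturbation of the exponent, and taking $r$ and $R$ large pushes everything below the surplus $\alpha_0-\omega^*(\kappa)$. If you want to retain your framework, the cleanest fix is to perform this small/large-jump split first and tilt only the light-tailed part; carrying an anisotropic (Gaussian-transverse) inductive hypothesis for the full tilted kernel is considerably harder and you have not done it.
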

This is the sharpest estimate which we could obtain in the exponential case. The proof is quite delicate, since the estimate from Theorem~\ref{th:gen_exp} is not strong enough. Theorem 1.3 extends recent results by Ascione \emph{et al.}~\cite{Ascione} who show two-sided resolvent estimates of a similar type, but for relativistic case only, i.e.\ for $\alpha > m$.

Summing up, our estimates reveal two interesting phenomena regarding the decay rates of resolvent kernels:
First, the resolvent densities $g_{\alpha}(x)$ are comparable with $\nu(x)$ at infinity for all $\alpha >0$ if, and only if, the profile of $\nu(x)$ is subexponential, see Theorem \ref{th:subexp}. This demonstrates, in particular, the transition from the subexponential to the exponential regime. 
Secondly, in the case of exponentially decaying profiles -- this was initially observed by Carmona, Masters, and Simon in the relativistic case --, the rate of decay of $g_\alpha(x)$ depends on the position of $\alpha >0$ relative to a critical parameter $\omega^*(\kappa)$, with a sharp and continuous transition in the exponent $\gamma_{\alpha}$, see Theorem \ref{th:exp} and \ref{th:exp_sharp} for the rather delicate argument. Specifically, $g_{\alpha}(x)$ is comparable with $\nu(x)$ at infinity if, and only if, $\alpha > \omega^*(\kappa)$. 
Both effects translate to the decay rates of bound states, which we analyse in detail in Section \ref{sec:applications}, see Corollaries \ref{cor:bound_state} and \ref{cor:bound_state_exp}, and the concrete Examples \ref{ex:ex1} and \ref{ex:ex2}.

The rest of this paper is organized as follows. In Section \ref{sec:conv}, we provide preliminaries on convolution semigroups and establish general estimates, including Lemma \ref{lem:exp_mom} and Theorem~\ref{th:gen_exp}, which are crucial for the subsequent analysis. Sections \ref{sec:subexp} (subexponential case) and \ref{sec:exp} (exponential case) contain the proofs of our resolvent kernel bounds. Finally, in Section \ref{sec:bound-states}, we derive estimates for bound states.

\subsection*{Notation} Two-sided estimates between functions are sometimes indicated by
\begin{gather*}
    f(x)\asymp g(x),\; x\in A
    \iff
    \exists C\geq 0 \;\forall x\in A \::\: C^{-1}f(x)\leq g(x)\leq C f(x).
\end{gather*}
The notation $f(x)\casymp{C}g(x)$ is used to highlight the comparison constant $C$. The symbol $B_r(x)$ denotes an open Euclidean ball of radius $r>0$ centred at $x \in \Rd$; $a\wedge b$ and $a\vee b$ are the minimum and maximum of $a$ and $b$, respectively. By $f_+(x)$ and $x_+$ we mean the positive parts $\max\{f(x), 0\}$ and $\max\{x,0\}$. 

\section{Applications to bound state decay and examples} \label{sec:applications}

Our results can be applied to analyse precisely  the decay of the bound states for non-local Schr\"odinger operators $H= -L+V$ acting in the complex Hilbert space $L^2(\Rd)$. We need a few preparations and further assumptions. Recall that the quadratic form $(\Ecal^{(-L)},\Dcal(\Ecal^{(-L)}))$ corresponding to $-L$ is given by
\begin{align*}
  \Ecal^{(-L)}(u,u)   &= \int_{\Rd} \Psi(\xi) |\Fourier(u)(\xi)|^2 \,d\xi, \\
  \Dcal(\Ecal^{(-L)}) &= \left\{v\in L^2(\Rd):\: \int_{\Rd} \Psi(\xi) |\Fourier(v)(\xi)|^2 \,d\xi < \infty \right\}.
\end{align*} 

We impose the following assumption on the potential $V$.
\begin{itemize}
\item[\bfseries(\namedlabel{H1}{H1})] Let 
\begin{gather}\label{eq:ass_potential_0}
	V \in L^1_{\loc}(\Rd), \quad V \leq 0,
\intertext{be such that}\label{eq:ass_potential}
	V(x) \to 0 \text{\ \ as\ \ } |x| \to \infty,
\end{gather}
and assume that $V$ is relatively form-bounded with respect to $L$ such that the relative bound is less than one, i.e.\ there are $a \in (0,1)$ and $b>0$ such that
\begin{gather} \label{eq:relative_bdd}
	|\left\langle V u, u \right\rangle|  \leq a \Ecal^{(-L)}(u,u) + b \left\|u\right\|^2_{L^2}, 
	\quad u \in \Dcal(\Ecal^{(-L)}).
\end{gather}
\end{itemize}

By the \textbf{Kato-Lions-Lax-Milgram-Nelson (KLMN) theorem} (see e.g.\ \cite[Theorem 10.21]{Schmudgen}), there exists a unique, lower bounded, self-adjoint operator $H$, called the form sum of the operators $-L$ and $V$ (we simply write $H=-L+V$), such that the form $\left(\Ecal^H, \Dcal(\Ecal^H)\right)$ of $H$ satisfies 
\begin{align} \label{eq:eq_forms}
\Dcal(\Ecal^H) = \Dcal(\Ecal^{(-L)}) \quad \text{and} \quad \Ecal^H(u,v) = \Ecal^{(-L)}(u,v) + \int_{\Rd}V(x) u(x)\overline{v(x)}\, dx, \quad u,v \in \Dcal(\Ecal^H).
\end{align}

The corresponding Schr\"odinger semigroup $\left\{e^{-tH}, t  \geq 0\right\}$ consists of bounded, self-adjoint operators such that $\left\|e^{-tH}\right\|_{L^2,L^2} \leq e^{w t}$, $t>0$, for some $w \geq 0$, see \cite[Theorem 6.2]{Casteren}. We further require the following regularity condition:
\begin{itemize}
\item[\bfseries(\namedlabel{H2}{H2})] There exists some $r >1$ such that for every $t>0$ and $\phi \in L^2(\Rd)$,
\begin{gather*}
	e^{-tH} \phi \in L^2(\Rd) 
	\text{\ has a version which is continuous and bounded on\ } \{y \in \Rd: |y|\geq r \};
\end{gather*}
\end{itemize}
The assumptions \eqref{H1}--\eqref{H2} apply to a large class of L\'evy operators $L$ and potentials $V$. 
If $V$ belongs to the Kato class corresponding to a given $L$, then these conditions typically hold -- a probabilistic approach based on the Feynman--Kac formula is in \cite{Demuth-Casteren}, an analytic approach using perturbation arguments of the kernels is given in \cite{Bogdan-Hansen-Jakubowski,Grzywny-Kaleta-Sztonyk}. The Kato class of $L$ includes bounded functions and functions with singularities, which are subcritical with respect to $L$. However, \eqref{H1}--\eqref{H2} are also satisfied for more singular models, e.g.\ for fractional Laplacians with Hardy-type potentials, and for operators which are close (in a certain sense) to fractional Laplacians with Hardy-type potentials \cite{Bogdan-Grzywny-Jakubowski-Pilarczyk, Jakubowski-Kaleta-Szczypkowski}. This includes a large class of L\'evy operators with Coulomb potentials.

We call $\phi \in L^2(\Rd)$ a \textbf{bound state} of $H$ if
\begin{align} \label{eq:eigenequation}
\text{there exists a number $\lambda<0$ such that\ \ }  H\phi = \lambda \phi.
\end{align}
By \eqref{H2}, $\phi$ has a version which is continuous on $\{x: |x| \geq r\}$. We will always work with this version. Moreover, $\phi_0 \in L^2(\Rd)$ satisfying \eqref{eq:eigenequation} with $\lambda_0:= \inf \sigma(H) <0$ is called the \textbf{ground state} of $H$; $\phi_0$ is unique and strictly positive \cite[Theorem XIII.44]{Reed-Simon}.

\begin{corollary}[L\'evy densities with subexponential profiles]\label{cor:bound_state}
Assume \eqref{L1}--\eqref{L2}, \eqref{H1}--\eqref{H2} and \eqref{eq:sub_exp}. Let $\phi$ be a bound state with eigenvalue $\lambda<0$, cf.\ \eqref{eq:eigenequation}, and let $\phi_0$ be the ground state with eigenvalue $\lambda_0 \leq \lambda  <0$. Then the following assertions hold, no matter how small $|\lambda|$ and $|\lambda_0|$ might be: 
\begin{enumerate}
\item\label{cor:bound_state-a}
There exist $C>0$ and $\rho>r$ such that
\begin{gather*}
	|\phi(x)| \leq C f(|x|), \quad |x| \geq \rho,
\end{gather*}

\item\label{cor:bound_state-b} 
There exist $C>1$ and $\rho>r$ such that
\begin{gather*}
	C^{-1} f(|x|) \leq \phi_0(x) \leq C f(|x|), \quad |x| \geq \rho.
\end{gather*}
\end{enumerate}
\end{corollary}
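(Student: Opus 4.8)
The plan is to deduce the decay of bound states from the resolvent estimate in the subexponential case, Theorem \ref{th:subexp}, together with an eigenfunction identity expressing $\phi$ in terms of the resolvent of $-L$ and the potential $V$. First I would rewrite the eigenequation $H\phi=\lambda\phi$ as $(-L-\lambda)\phi = -V\phi$, and, since $\lambda<0$ and $-L$ is a non-negative self-adjoint operator, invert $(-L-\lambda)$: we have $-L-\lambda = -L+|\lambda|$, whose inverse is given by the resolvent kernel $g_{|\lambda|}$, i.e.\ $(-L+|\lambda|)^{-1}u = g_{|\lambda|}*u$ (recall $g_\alpha(x)=\int_0^\infty e^{-\alpha t}p_t(x)\,dt$ and $P_t$ acts by convolution with $p_t(-\cdot)=p_t(\cdot)$ by symmetry). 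This yields the Birman--Schwinger-type representation
\begin{align*}
	\phi(x) = \int_{\Rd} g_{|\lambda|}(x-y)\, (-V(y))\, \phi(y)\, dy = \int_{\Rd} g_{|\lambda|}(x-y)\, |V(y)|\, \phi(y)\, dy,
\end{align*}
valid because $-V\phi \in L^2$ (by \eqref{eq:relative_bdd} and $\phi\in\Dcal(\Ecal^{(-L)})$, as $\phi$ is an $H$-eigenfunction) and $g_{|\lambda|}*\,\cdot$ maps $L^2$ to $\Dcal(-L)$; one should justify the pointwise identity using \eqref{H2}, which gives a continuous bounded version of $\phi$ on $\{|y|\ge r\}$, and the local integrability of $V$.

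Next I would combine three ingredients. (i) From Theorem \ref{th:subexp}, equivalence \ref{th:subexp-A} $\Leftrightarrow$ \ref{th:subexp-E}: under \eqref{eq:sub_exp}, for any $\alpha_0>0$ there is $C(\alpha_0)$ with $g_\alpha(x)\le \frac{C}{\alpha-\alpha_0}f(|x|)$ for $|x|\ge1$, $\alpha>\alpha_0$; pick $\alpha_0 = |\lambda|/2$ so that this applies at $\alpha=|\lambda|$. For $|x|<1$ one still has $g_\alpha$ locally integrable (it is the potential kernel of a transient-perturbed semigroup; more simply $g_\alpha\in L^1$ since $\int g_\alpha = 1/\alpha$). (ii) A convolution-stability estimate for the profile: I claim that for the subexponential profile $f$, for every $\delta>0$ there is $R_\delta$ and $C_\delta$ with
\begin{align*}
	\int_{\Rd} f(|x-y|)\, |V(y)|\, dy \le C_\delta\, f(|x|) + \delta \sup_{|z|\ge R_\delta/2} \frac{|\phi(z)|}{f(|z|)} \cdot (\text{small}),\quad |x|\ge R_\delta.
\end{align*}
Here the key structural facts are the near-doubling property of subexponential $f$ (from \eqref{eq:doubling_kappaB}, $f(r)\ge \tilde C e^{-\epsilon r}$, which bounds $f(r-a)/f(r)$), the decay $V(y)\to0$ in \eqref{eq:ass_potential}, the bound $\phi\in L^\infty$ away from the origin (from \eqref{H2}) and $\phi\in L^2$, and the assumption \eqref{L2}, i.e.\ $K_f(r)\to0$, which controls the "long jump" part $\int_{|x-y|>r,\,|y|>r} f(|x-y|)f(|y|)\,dy \le K_f(r)f(|x|)$ — this is exactly the term that arises when both $x-y$ and $y$ are large.

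The main obstacle is step (ii): running the \textbf{bootstrap}. Set $M(R):=\sup_{|x|\ge R}|\phi(x)|/f(|x|)$, which is finite for each $R>r$ because $\phi$ is bounded on $\{|x|\ge r\}$ and $f$ is bounded below on compacta. Splitting the integral $\int g_{|\lambda|}(x-y)|V(y)\phi(y)|\,dy$ into the region $|y|\le R$ (where $\phi\in L^2$, $V\in L^1_{\loc}$, and $g_{|\lambda|}(x-\cdot)\le C f(|x-\cdot|)\le C' f(|x|)$ for $|x|\ge 2R$ by doubling) and $|y|>R$ (where $|V(y)|\le\eta$ is small and $|\phi(y)|\le M(R)f(|y|)$, so we invoke \eqref{L2} and $\int_{|x-y|\le r_0} g_{|\lambda|}(x-y)\,dy\le 1/|\lambda|$ for the short-jump piece), one obtains, for $|x|\ge \rho$ with $\rho$ large,
\begin{align*}
	\frac{|\phi(x)|}{f(|x|)} \le C_1 + \left( C_2\, \eta + C_3\, K_f(r_0) \right) M(\rho),
\end{align*}
with constants independent of $\rho$ once $\rho$ is large; choosing $r_0$ and then $\rho$ (hence $\eta$) large enough makes the bracket $\le 1/2$, and then $M(\rho)\le 2C_1<\infty$, which is assertion \ref{cor:bound_state-a}. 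For \ref{cor:bound_state-b}, the upper bound is the special case $\phi=\phi_0$; the matching lower bound $\phi_0(x)\ge c f(|x|)$ follows by positivity ($\phi_0>0$) and the lower resolvent bound \eqref{eq:res_low}: from the representation $\phi_0 = g_{|\lambda_0|}*(|V|\phi_0)$ and $|V|\phi_0\ge0$ not identically zero, pick a ball $B$ where $|V|\phi_0\ge c_0>0$; then $\phi_0(x)\ge c_0\int_B g_{|\lambda_0|}(x-y)\,dy\ge c_0'\, g_{|\lambda_0|}(x-y_0)\ge c_0'' f(|x|)$ for $|x|$ large, using \eqref{eq:res_low} and the doubling of $f$ to replace $|x-y_0|$ by $|x|$.
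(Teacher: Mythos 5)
Your overall strategy—express the bound state via a Birman--Schwinger-type resolvent representation and then invoke the subexponential resolvent bound from Theorem~\ref{th:subexp}.\ref{th:subexp-E} together with the comparability \eqref{eq:comp}—is the same route the paper takes, and the treatment of the ground-state lower bound at the end is essentially identical. The paper does, however, differ in two places. First, it does not invert $(-L+|\lambda|)$ formally (which would require $V\phi\in L^2$, a fact that \eqref{eq:relative_bdd} alone does not provide); it derives the representation \eqref{eq:eig_aux_est}--\eqref{eq:gs_aux_est} carefully through the duality/perturbation identity \eqref{eq:weak_perturb_form} and a continuity argument, which only needs form-boundedness.

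The genuine gap in your proof is in the bootstrap. You set $M(R)=\sup_{|x|\geq R}|\phi(x)|/f(|x|)$ and assert it is finite "because $\phi$ is bounded on $\{|x|\ge r\}$ and $f$ is bounded below on compacta". But $\{|x|\geq R\}$ is not compact, and $f(|x|)\to 0$ as $|x|\to\infty$: boundedness of $\phi$ alone gives no a priori bound on $|\phi(x)|/f(|x|)$, and indeed $M(R)=\infty$ is precisely the scenario one is trying to rule out. Your inequality $M(\rho)\le C_1 + (C_2\eta+C_3 K_f(r_0))M(\rho)$ is therefore vacuous until $M(\rho)<\infty$ is established by some independent argument, and the patch is not innocuous: replacing $M$ with a truncated or regularised version leaves uncontrolled tail terms (the far part of $\phi$ still enters the convolution through the $\{|y|>N\}$ region, so a finite-window supremum does not close). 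The paper sidesteps this entirely by invoking the localisation statement from \cite[Lemma 4.4]{Jakubowski-Kaleta-Szczypkowski}: iterating a resolvent identity together with $V(y)\to 0$ shows that, for any $\delta\in(0,\alpha/2)$, one may replace $g_\alpha$ by $g_{\alpha-\delta}$ \emph{and} truncate the domain of integration to a fixed compact set $\{|z|\le\tilde r\}$,
\begin{gather*}
|\phi(x)|\leq \int_{|z|\leq\tilde r} g_{\alpha-\delta}(z-x)\,|V(z)|\,|\phi(z)|\,dz,\qquad |x|\geq\tilde r,
\end{gather*}
so that the integrand's support is compact and the profile estimate applies directly to $g_{\alpha-\delta}(z-x)\le C f(|x-z|)\le C' f(|x|)$ with no self-referential $M$ term. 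Without this step, or an equivalent a priori decay bound, your argument does not go through.
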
 

Next, we give estimates for the exponential case. Recall that
\begin{gather*}
\gamma_{\alpha}(\theta) = 
\begin{cases}
	\kappa,\ & \alpha > \omega(\kappa \theta),\\
	\omega(\tinybullet\theta)^{-1}(\alpha),\ & 0 < \alpha \leq \omega(\kappa \theta),
\end{cases}
\end{gather*} 
and $\omega^*(\kappa):= \sup_{\theta \in \sphere^{d-1}} \omega(\kappa \theta)$.

The following result is formulated for \textbf{radial} L\'evy measures only as we want to make it as sharp as possible.
In this case the function $\omega$ is radial, and the map $(0,\infty) \ni \alpha \mapsto \gamma_{\alpha}$ is a continuous function which does not depend on $\theta \in \sphere^{d-1}$ (so we omit $\theta$ in the notation). Moreover, if $\alpha < \omega^*(\kappa)$, then $\gamma_{\alpha} < \kappa$, and $\gamma_{\alpha} \downarrow 0$ as $\alpha \downarrow 0$. 

\begin{figure}[h!]\centering
	\mbox{}\hfill
	\includegraphics[width=0.4\linewidth]{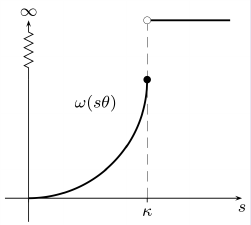}\hfill
	\includegraphics[width=0.5\linewidth]{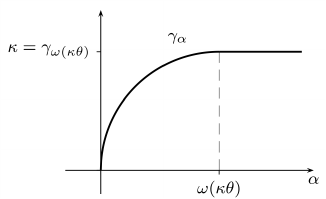}\hfill\mbox{}
	\caption{The left panel shows the function $s\mapsto \omega(s\theta)$ for any fixed $\theta\in\sphere^{d-1}$ in the relativistic case, i.e.\ $\Psi(\xi) = \sqrt{|\xi|^2+m^2}-m$ (note that we are in the radial case). It is strictly convex and continuous in $[0,\kappa]$, with finite and positive derivatives of any order in the interval $[0,\kappa)$. In the open interval $(\kappa,\infty)$ it is infinite. The (generalized) inverse $\alpha\mapsto\gamma_\alpha$ is shown in the right panel. Notice that $\gamma_\alpha$ is continuous in $[0,\omega(\kappa\theta)]$, and it is continuously extended by $\kappa = \gamma_{\omega(\kappa\theta)}$ on the interval $[\omega(\kappa\theta),\infty)$. Recall that the Schr\"odinger operator $H=-L+V$ serves as the energy operator (Hamiltonian) in the mathematical model describing the motion of a (relativistic) quantum particle. The kinetic term $-L = \Psi(\frac 1i \nabla)$ is a positive, homogeneous pseudo-differential operator. Here, $\frac 1i \nabla$ is the momentum operator in a multidimensional setting, consistent with the quantization rules of quantum mechanics. The threshold $\omega(\kappa\theta) = - \Psi(\frac 1i \kappa\theta)$ seems to relate to the particle's kinetic energy. It is critical in the sense that $\omega(\xi) = \infty$ whenever $|\xi| > \kappa$.
}
\label{pic-graph-gamma}
\end{figure} 

\begin{corollary}[L\'evy densities with exponential profiles]\label{cor:bound_state_exp}
	Assume \eqref{L1}--\eqref{L2} and \eqref{H1}--\eqref{H2}, and let $\nu$ be radial, i.e.\ $\nu(x)=\nu(|x|)$, and such that \eqref{eq:exp_prof}--\eqref{eq:g_sub_exp} hold. Let $\phi$ be a bound state with eigenvalue $\lambda<0$, cf.\ \eqref{eq:eigenequation}, and $\phi_0$ be the ground state with eigenvalue $\lambda_0 \leq \lambda <0$.
\begin{enumerate}
\item\label{cor:bound_state_exp-a}
	If $|\lambda| > \omega^*(\kappa)$, then there exist $C>0$ and $\rho>r$ such that
	\begin{gather*}
	|\phi(x)| \leq C \exp\left(-\kappa |x| \right) h(|x|), \quad |x| \geq \rho.
	\end{gather*}
\item\label{cor:bound_state_exp-b}
	If $|\lambda_0| > \omega^*(\kappa)$, then there exist $C>1$ and $\rho>r$ such that
	\begin{gather*}
	C^{-1} \exp\left(-\kappa |x| \right) h(|x|) \leq \phi_0(x) \leq C \exp\left(-\kappa |x| \right) h(|x|), \quad |x| \geq \rho.
	\end{gather*}
\item\label{cor:bound_state_exp-c} 
	If $|\lambda| \leq \omega^*(\kappa)$, then for every $\epsilon >0$ there exist $C=C(\epsilon)>0$ and $\rho>r$ such that
	\begin{gather*}
	|\phi(x)| \leq C \exp\left(-(\gamma_{|\lambda|}-\epsilon)|x|\right), \quad |x| \geq \rho.
	\end{gather*}
\item\label{cor:bound_state_exp-d} 
	If $|\lambda_0| \leq \omega^*(\kappa)$, then for every $\epsilon >0$ there exist $C=C(\epsilon)>1$ and $\rho>r$ such that
	\begin{gather*}
	C^{-1} \exp\left(-(\gamma_{|\lambda_0|}+\epsilon)|x|\right) \leq \phi_0(x) \leq C \exp\left(-(\gamma_{|\lambda_0|}-\epsilon)|x|\right), \quad |x| \geq \rho.
\end{gather*}
\end{enumerate}
\end{corollary}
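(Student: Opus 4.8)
The plan is to deduce Corollary~\ref{cor:bound_state_exp} from the resolvent estimates in Theorems~\ref{th:exp} and~\ref{th:exp_sharp} by passing through the Schrödinger semigroup $\{e^{-tH}, t \geq 0\}$ and a Duhamel/perturbation comparison that replaces the full Schrödinger kernel by the free resolvent kernel $g_\alpha$. The starting point is the eigenequation $H\phi = \lambda\phi$ with $\lambda < 0$, which gives $e^{-tH}\phi = e^{-t\lambda}\phi = e^{t|\lambda|}\phi$ pointwise on $\{|x| \geq r\}$ by \eqref{H2}; integrating against $e^{-\beta t}$ for $\beta > |\lambda|$ yields the resolvent identity $\phi(x) = (\beta - |\lambda|)\int_0^\infty e^{-\beta t} (e^{-tH}\phi)(x)\, dt$. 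First I would fix $\beta$ slightly larger than $|\lambda|$ and use the perturbation formula $e^{-tH} = P_t + \int_0^t e^{-(t-s)H} V P_s\, ds$ (valid in the form sense under \eqref{H1}; here $V \leq 0$ helps since $|V| = -V$), which after integrating in $t$ gives, for $\beta$ in the resolvent set,
\begin{gather*}
	\phi(x) = (\beta - |\lambda|)\left[ G_\beta\phi(x) + \int_{\Rd} g_\beta(x-z)\, |V(z)|\, \phi(z)\, dz \cdot(-1)^{?}\right],
\end{gather*}
more precisely $\phi = (\beta-|\lambda|) G_\beta \phi$ combined with $G_\beta \phi = g_\beta * \phi - g_\beta*(|V|\phi)\cdot$(sign bookkeeping); the cleanest route is to write the eigenequation as $(-L + \beta)\phi = (\beta - |\lambda|)\phi - V\phi$ in the form sense, so that $\phi = (\beta - |\lambda|)\, g_\beta * \phi + g_\beta * (|V|\phi)$, using $-V = |V| \geq 0$. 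This represents $\phi$ as a sum of convolutions of the \emph{free} resolvent kernel $g_\beta$ with the (decaying) quantities $\phi$ and $|V|\phi$.

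The core of the argument is then a bootstrap on the decay of $\phi$ on $\{|x|\geq r\}$. Since $V(x) \to 0$, for any $\delta > 0$ there is $R_\delta$ with $|V(x)| \leq \delta$ for $|x| \geq R_\delta$; splitting the integrals over $\{|z| \leq R_\delta\}$ (a compact set, where $\phi \in L^2$ and $g_\beta$ is bounded away from the diagonal, contributing a term controlled by $g_\beta(x)$ up to constants, hence by $\exp(-\gamma_\beta|x|)$ or by $f(|x|)$ depending on the regime) and $\{|z| > R_\delta\}$ (where we can use the a priori bound on $\phi$ already obtained), one runs the following iteration. Step 1: from $\phi \in L^2$, the general directional bound in Theorem~\ref{th:exp}.\ref{th:exp-1} (resp.\ the radial two-sided bound Theorem~\ref{th:exp}.\ref{th:exp-2}) applied to $g_\beta$, and the convolution inequality $\int g_\beta(x-z) e^{-a|z|}\, dz \lesssim e^{-(a\wedge\gamma_\beta)|x|}$ valid because $g_\beta$ has exponential decay with rate $\gamma_\beta$, deduce a first exponential bound $|\phi(x)| \leq C e^{-c_1|x|}$ with some $c_1 > 0$. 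Step 2: feed this bound back into the representation; each iteration either improves the exponential rate towards $\gamma_\beta$ or, once $c_n \geq \gamma_\beta$, the rate saturates at $\gamma_\beta$ (this is where the behaviour of $\int g_\beta(x-z) e^{-a|z|} dz$ changes qualitatively: for $a > \gamma_\beta$ the tail of $g_\beta$ dominates, for $a < \gamma_\beta$ the Gaussian-like competition pins the rate at $a$, and one needs the polynomial corrections from the profile $h$ to keep the bookkeeping sharp). Since $\beta$ can be taken arbitrarily close to $|\lambda|$ and $\alpha \mapsto \gamma_\alpha$ is continuous, in the subcritical regime $|\lambda| \leq \omega^*(\kappa)$ this produces, for every $\epsilon > 0$, the bound $|\phi(x)| \leq C \exp(-(\gamma_{|\lambda|} - \epsilon)|x|)$ — this is \ref{cor:bound_state_exp-c}. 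In the supercritical regime $|\lambda| > \omega^*(\kappa)$, one first gets $\gamma_\beta = \kappa$ for $\beta$ close to $|\lambda|$, so the iteration pushes $c_n$ up to $\kappa$, and then the \emph{sharp} estimate $g_\beta(x) \asymp f(|x|) = e^{-\kappa|x|}h(|x|)$ from Theorem~\ref{th:exp_sharp} (available precisely because $\beta > \omega^*(\kappa)$) upgrades the conclusion from $e^{-(\kappa - \epsilon)|x|}$ to the sharp $\exp(-\kappa|x|)h(|x|)$ — this is \ref{cor:bound_state_exp-a}. Here one uses that $h$ is sub-exponential so that $\int f(x-z) e^{-\kappa|z|}h(|z|)\, dz \lesssim f(|x|)$; this is essentially condition \eqref{L2} again (with $f$ in place of the convolution kernel), or can be checked directly from \eqref{eq:g_sub_exp}.

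For the ground-state lower bounds \ref{cor:bound_state_exp-b} and \ref{cor:bound_state_exp-d}, I would use that $\phi_0 > 0$ everywhere and that $G_\beta$ and $P_t$ are positivity preserving: from $\phi_0 = (\beta - |\lambda_0|)\, g_\beta * \phi_0 + g_\beta * (|V|\phi_0) \geq (\beta - |\lambda_0|)\, g_\beta * \phi_0$ and the fact that $\phi_0$ is bounded below by a positive constant on any fixed ball, one gets $\phi_0(x) \geq c \int_{B_1(0)} g_\beta(x - z)\, dz \geq c' g_\beta(x)$ for $|x|$ large (using the near-diagonal comparability of $g_\beta$, which follows from \eqref{eq:res_low} and the upper bounds), hence $\phi_0(x) \gtrsim e^{-\gamma_\beta|x|}$ resp.\ $\gtrsim f(|x|)$; combining with the upper bounds and letting $\beta \downarrow |\lambda_0|$ gives the two-sided statements. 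The main obstacle I anticipate is making the Duhamel/resolvent representation of $\phi$ rigorous under the weak assumptions \eqref{H1}--\eqref{H2} — in particular justifying the pointwise convolution identity and the interchange of integrals on $\{|x| \geq r\}$ when $V$ is only $L^1_{\loc}$ and merely form-bounded (not Kato class), and controlling the contribution of $\phi$ near the origin where one only has $L^2$ information rather than pointwise bounds. The resolution is to work on the region $\{|x| \geq r\}$ throughout (exploiting the boundedness and continuity granted by \eqref{H2}), to treat the compact part $\{|z| \leq R\}$ via Cauchy–Schwarz together with the local boundedness of $z \mapsto g_\beta(x-z)$ for $|x|$ large, and to absorb the self-referential term $g_\beta*(|V|\phi)$ restricted to $\{|z| > R_\delta\}$ using the smallness of $\delta = \|V\|_{L^\infty(\{|z|>R_\delta\})}$ relative to the convolution norm of $g_\beta$, which is finite since $g_\beta \in L^1$.
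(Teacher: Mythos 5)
Your overall strategy — reduce the eigenfunction to a resolvent representation, localize, and invoke the estimates of Theorems~\ref{th:exp} and~\ref{th:exp_sharp} — is the same as the paper's, and the lower-bound sketch is on the right track. However, two points deserve attention, and the second hides a real gap.

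First, the resolvent identity you use, $\phi = (\beta-|\lambda|)\,g_\beta*\phi + g_\beta*(|V|\phi)$ with $\beta > |\lambda|$, carries the self-referential term $(\beta-|\lambda|)\,g_\beta*\phi$. The paper instead lets $t\to\infty$ in the Duhamel formula \eqref{eq:weak_perturb_form}, which (using $e^{\lambda t}\to 0$, $\lambda<0$, and $|V|=-V$) yields the \emph{clean} identity $|\phi(x)| \leq \int g_{|\lambda|}(x-z)\,|V(z)|\,|\phi(z)|\,dz$, with $\alpha=|\lambda|$ exactly on the resolvent boundary and no dangling $g_\beta*\phi$. If you sum the Neumann series $(I-(\beta-|\lambda|)\,g_\beta*)^{-1}g_\beta = \sum_{n\geq0}(\beta-|\lambda|)^{n}g_\beta^{*(n+1)} = g_{|\lambda|}$, your identity collapses to the paper's, so the extra term buys nothing and complicates the bookkeeping.

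Second, Step~1 of your bootstrap does not close. You propose to produce a first exponential bound ``from $\phi\in L^2$'' via the convolution inequality $\int g_\beta(x-z)\,e^{-a|z|}\,dz \lesssim e^{-(a\wedge\gamma_\beta)|x|}$, but that inequality presupposes the exponential rate $a>0$ you are trying to establish; from $\phi\in L^2$, or even $\phi\in L^\infty(\{|x|\geq r\})$ via \eqref{H2}, the convolution $g_\beta*\phi$ is only bounded, not exponentially decaying, so the iteration never leaves $a=0$. The decay of $V$ must be used \emph{structurally}, not merely as a smallness factor: the paper invokes \cite[Lemma~4.4]{Jakubowski-Kaleta-Szczypkowski} to replace $\int_{\Rd} g_\alpha(x-z)|V(z)||\phi(z)|\,dz$ by a \emph{compactly localized} integral $\int_{|z|\leq\tilde r} g_{\alpha-\delta}(z-x)|V(z)||\phi(z)|\,dz$ for $|x|\geq\tilde r$. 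Once the integration region is a fixed compact set, the resolvent bounds of Theorems~\ref{th:exp}.\ref{th:exp-2} and~\ref{th:exp_sharp} (together with the comparability \eqref{eq:comp} and the continuity of $\alpha\mapsto\gamma_\alpha$ to absorb the $\delta$-loss) apply in a single step, and no iteration is needed. In particular, in the supercritical regime~\ref{cor:bound_state_exp-a} the sharp bound $|\phi(x)|\leq C e^{-\kappa|x|}h(|x|)$ is obtained directly from Theorem~\ref{th:exp_sharp} applied to the localized integral, not by ``upgrading'' a preliminary $e^{-(\kappa-\epsilon)|x|}$ estimate. The missing localization lemma is the technical heart of the argument, and is what you would need to reproduce or cite.
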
 
Our results generalize some estimates by Carmona, Masters and Simon \cite[Propositions IV.1 and IV.3]{Carmona-Masters-Simon} for fractional and relativistic Schr\"odinger operators. Corollary \ref{cor:bound_state_exp} also improves the upper bound in \cite[Proposition IV.2]{Carmona-Masters-Simon} for exponentially localized L\'evy measures with profiles satisfying \eqref{eq:exp_prof}--\eqref{eq:g_sub_exp}. The second assertion in Corollary \ref{cor:bound_state_exp} gives sharp two-sided bounds for the ground state, which seems to be novel even in the relativistic case. Moreover, our results allow for a better understanding of the threshold $m$ for $|\lambda|$ and of the decay rate $\sqrt{2m \alpha - \alpha^2}$ appearing in the relativistic case; see Figure~\ref{pic-graph-gamma} and Example \ref{ex:ex2} for a further discussion.

The decay of the bound states for models with L\'evy operators whose L\'evy densities have profiles satisfying \eqref{L2} has already been studied by Kaleta and L\H orinczi \cite[Theorem 4.3 and 4.1]{Kaleta-Lorinczi}. In that paper upper estimates for the decay are established with the help of probabilistic potential theory and properties of harmonic functions. This requires several technical assumptions, which are not always easy to check. Theorem 4.2 in \cite{Kaleta-Lorinczi} essentially contains a rough approximation $\eta_0$ for the energy threshold $\omega^*(\kappa)$. We actually identify identify $\omega^*(\kappa)$ in Corollary \ref{cor:bound_state_exp}. In some concrete situations, see Example~\ref{ex:ex2}, there is even a closed expression for it. The paper by Kondratiev \emph{et al.}~\cite{Kondratiev} contains estimates for the resolvent and the ground state. It can be seen as an elementary version of \cite{Kaleta-Lorinczi} since it uses only finite L\'evy measures and, therefore, allows for direct computations. Let us point out that the methods in the present paper are very different from those used in \cite{Carmona-Masters-Simon,Kaleta-Lorinczi,Kondratiev} since we use a (non-probabilistic) semigroup approach. 

We will now discuss two examples illustrating Theorem \ref{th:subexp} and \ref{th:exp_sharp} and Corollaries \ref{cor:bound_state} and \ref{cor:bound_state_exp}.

\begin{example}[transition from the subexponential to the exponential regime]\label{ex:ex1}
Let the profile of the radial L\'evy density $\nu$ be given by
\begin{gather}\label{eq:prof_ex}
	f(r) = \left(\I_{[0,1]}(r) r^{-d-\beta} + \I_{(1,\infty)}(r) r^{-\delta} \right)\exp\left(-\kappa r^{\eta}\right) ,
\end{gather}
where
\begin{gather*}
	\beta \in (0,2), \quad \kappa >0, \quad \eta \in (0,1], \quad \text{and} \quad \delta \geq 0.
\end{gather*}
We have
\begin{gather}\label{eq:psi_ex}
	\Psi(\xi) \asymp \Psi^*(|\xi|) \asymp |\xi|^{\beta} \wedge |\xi|^2, \quad \xi \in \Rd,
\end{gather}
for the full range of parameters $\beta, \kappa, \eta$ and $\delta$; in particular \eqref{L1} holds. The second assumption \eqref{L2} is satisfied if, and only if, $\eta = 1$ and $\delta > \frac 12(d+1)$, or $\eta \in (0,1)$ and  $\delta \geq 0$, see e.g.\ \cite[Lemma 8]{KaletaSztonyk2019}.

Theorem \ref{th:subexp} identifies the first sharp transition regarding the rate of decay of the resolvent kernel $g_{\alpha}(x)$ as $|x| \to \infty$ as it says that this decay is controlled by $f(|x|)$ for every $\alpha>0$ (no matter how small $\alpha>0$ may be) if, and only if,  $\eta \in (0,1)$. This property breaks down as soon as $\eta$ takes the value $1$. This transition translates to the decay rates of bound states for the corresponding Schr\"odinger operators, see Corollary \ref{cor:bound_state}.

On the other hand, if $\eta = 1$, then the decay rate of $g_{\alpha}(x)$ depends on the position of $\alpha$ relative to $\omega^*(\kappa)$. This is illustrated in the next example for a class of relativistic L\'evy operators and semigroups.  
\end{example}

\begin{example}[relativistic $\beta$-stable L\'evy operators, resp., semigroups]\label{ex:ex2}
Let $\beta \in (0,2)$ and $m>0$ and
\begin{align*}
	\nu(x) = \nu(|x|) 
	&= \frac{ \beta}{2 (4\pi)^{d/2}\Gamma\left(1-\frac 12\beta\right)} \int_0^{\infty} \exp\left(-\frac{|x|^2}{4u} - m^{\frac 2\beta} u\right) u^{-1-\frac{d+\beta}{2}} \,du \\
    &= \frac{\beta 2^{\frac{\beta-d}{2}} m^{\frac{d+\beta}{2\beta}}}{\pi^{\frac{d}{2}}\Gamma\left(1-\frac 12\beta\right)} \frac{K_{\frac{d+\beta}{2}}\left(m^{\frac{1}{\beta}}|x|\right)}{|x|^{\frac{d+\beta}{2}}}, 
    \qquad x \in \Rd \setminus \left\{0\right\},
\end{align*} 
where
\begin{gather*}
	K_{\mu}(r) 
	= \frac{1}{2} \left(\frac{r}{2}\right)^{\mu} \int_0^{\infty} u^{-\mu-1} \exp\left(-u-\frac{r^2}{4u}\right) du, 
	\quad \mu>0,\; r>0,
\end{gather*} 
is the modified Bessel function of the second kind, see e.g.\ \cite[10.32.10]{NIST}. Using the asymptotics (see \cite[10.25.3 and 10.30.2]{NIST})
\begin{gather*}
	\lim_{r \to \infty} K_{\mu}(r) \sqrt{r} e^r  = \sqrt{\pi/2}, 
	\qquad \lim_{r \to 0} K_{\mu}(r) r^{\mu}  = 2^{\mu-1} \Gamma(\mu),
\end{gather*}
we can show that
\begin{gather*}
	\nu(|x|) \asymp f(|x|), 
	\text{\ \ where\ \ } 
	f(r) = \left(\I_{[0,1]}(r) r^{-d-\beta} + \I_{(1,\infty)}(r) r^{-(d+\beta+1)/2} \right)e^{-m^{1/\beta}r },
\end{gather*}
i.e.\ \eqref{eq:exp_prof}--\eqref{eq:g_sub_exp} hold with 
\begin{gather*}
	\kappa = m^{1/\beta} 
	\text{\ \ and\ \ }
	h(r) = \I_{[0,1]}(r) r^{-d-\beta} + \I_{(1,\infty)}(r) r^{-(d+\beta+1)/2}.
\end{gather*}
Observe that this is a special form of an exponential L\'evy density, resp., profile discussed in Example \ref{ex:ex1} above, cf.\ \eqref{eq:prof_ex}. In particular, \eqref{L1}--\eqref{L2} hold. We have 
\begin{gather*}
	\Psi(\xi) = \left(|\xi|^2+m^{2/\beta}\right)^{\beta/2} - m,
\end{gather*}
and the L\'evy operator associated with $\nu$ is the \textbf{relativistic ($\beta$-stable) operator}
\begin{gather*}
	L = -\left(-\Delta+m^{2/\beta}\right)^{\beta/2}+ m. 
\end{gather*}
The threshold, which determines the transition in the exponential decay rates in Theorems \ref{th:exp}, \ref{th:exp_sharp} and Corollary \ref{cor:bound_state_exp}, is given by 
\begin{gather*}
	\omega^*\left(m^{1/\beta}\right) = m, 
	\text{\ \ where\ \ }
	\omega(\xi) = m - \left(m^{2/\beta}-|\xi|^2\right)^{\beta/2}, 
	\quad |\xi| \leq m^{1/\beta}. 
\end{gather*} 
The rate $\gamma_{\alpha}$ appearing in Theorems \ref{th:exp} and \ref{th:exp_sharp} can be easily computed by inverting $\omega(\cdot)$, which gives
\begin{gather*}
	\gamma_{\alpha} = \sqrt{m^{2/\beta}-(m-\alpha)_{+}^{2/\beta}}, \quad \alpha >0.
\end{gather*}
In particular, we have $\gamma_{|\lambda|} = \sqrt{m^{2/\beta}-(m-|\lambda|)_{+}^{2/\beta}}$ in Corollary \ref{cor:bound_state_exp}, and for $\beta=1$ we recover the result of Carmona, Masters and Simon \cite[Proposition IV.1 and IV.3]{Carmona-Masters-Simon} quoted above. 
\end{example}
 
\section{Convolution semigroups} \label{sec:conv}

Let $\{ \mu_t,\, t\geq 0 \}$ be a convolution semigroup of probability measures such that the Fourier transform is of the form $\Fourier(\mu_t)(\xi)=\int_{\Rd} e^{i\scalp{\xi}{y}}\,\mu_t(dy)=\exp(-t\Psi(\xi))$ with a characteristic exponent of the form
\begin{gather*}
  \Psi(\xi) =    \int \left(1-\cos(\scalp{\xi}{y}) \right)\nu(y)\, dy ,\quad \xi\in\Rd.
\end{gather*}
By $\Psi^*$ we define the maximal function, and $\Psi^*_{-}$ is the generalized inverse, see \eqref{eq:maximal} and \eqref{eq:maximal_inv} in Section~\ref{intro}.

From \cite[Lemma 5(a)]{KaletaSztonyk2017} we know that there exists a constant $C_4 \in (0,1]$ such that 
\begin{gather*}
	C_4 \Psi^*(|x|) \leq \Psi(x) \leq \Psi^*(|x|), \quad x \in \Rd.
\end{gather*}
We also note that the functions $t\mapsto\Psi^*_{-}(t)$ and $t\mapsto 1\big/\Psi^*_{-}\left(\frac{1}{t}\right)$ are both increasing. Recall that \eqref{L1} is equivalent to the lower scaling property \eqref{eq:scaling}. By \cite[Proposition 3.6]{GrzywnySzczypkowski2020} 
this is also equivalent to the existence of $\widetilde C_1>0$ such that
\begin{gather*}
	\int_{\Rd} e^{-t\Psi(\xi)}|\xi|\, d\xi 
	\leq \widetilde C_1 \left(\Psi^*_{-}\left( \tfrac 1t\right)\right)^{d+1},
  	\quad t>0.
\end{gather*}

\begin{lemma}\label{h_upper_est} 
	If \eqref{L1} holds, then there exists a constant $C_5 >0$ such that
	\begin{equation}\label{h_upper_est_eq}
		\frac 1{\Psi^*_{-}\left(\frac{1}{t}\right)} 
		\leq C_5 t^{1/\alpha}, \quad t\in [C_5^{-\alpha},\infty),
	\end{equation}
	with $\alpha$ coming from \eqref{eq:scaling}. 
\end{lemma}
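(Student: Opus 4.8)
The plan is to deduce \eqref{h_upper_est_eq} directly from the lower scaling property \eqref{eq:scaling}, which is available because \eqref{L1} is equivalent to it. The key observation is that a lower scaling bound on $\Psi^*$ translates into an \emph{upper} scaling bound (a polynomial growth bound) on the generalized inverse $\Psi^*_{-}$, and from there into the claimed power-type upper bound for $s\mapsto 1/\Psi^*_{-}(1/s)$.

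First I would unwind \eqref{eq:scaling}: for $\lambda\geq 1$ and $r>0$ we have $\Psi^*(\lambda r)\geq C_2\lambda^\alpha\Psi^*(r)$. I want to read off how large $\Psi^*_{-}(s)$ can be in terms of $s$. Fix $r_0>0$ with $\Psi^*(r_0)>0$ (possible since $\Psi^*$ is increasing to $\infty$ and not identically zero by our standing assumption $\nu(\Rd\setminus\{0\})=\infty$); abbreviate $a_0:=\Psi^*(r_0)>0$. For any $s\geq a_0$, write $s=\Psi^*(\Psi^*_{-}(s))$. If $\Psi^*_{-}(s)\geq r_0$, apply \eqref{eq:scaling} with $r=r_0$ and $\lambda=\Psi^*_{-}(s)/r_0\geq 1$ to get
\begin{gather*}
	s=\Psi^*\!\left(\Psi^*_{-}(s)\right)\geq C_2\left(\frac{\Psi^*_{-}(s)}{r_0}\right)^{\alpha}\Psi^*(r_0)=\frac{C_2 a_0}{r_0^{\alpha}}\left(\Psi^*_{-}(s)\right)^{\alpha},
\end{gather*}
hence $\Psi^*_{-}(s)\leq \big(r_0^{\alpha} s/(C_2 a_0)\big)^{1/\alpha}=c\, s^{1/\alpha}$ with $c:=r_0/(C_2 a_0)^{1/\alpha}$. (The case $\Psi^*_{-}(s)<r_0$ is even easier and is absorbed into the constant by enlarging $c$ if needed, or handled separately for small $s$.) Consequently $1/\Psi^*_{-}(s)\geq c^{-1}s^{-1/\alpha}$ for all $s\geq a_0$.

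Now substitute $s=1/t$: this gives $1\big/\Psi^*_{-}(1/t)\geq c^{-1}t^{1/\alpha}$ — the wrong direction. So the statement we actually want, namely $1\big/\Psi^*_{-}(1/t)\leq C_5 t^{1/\alpha}$, is the reverse inequality, and it requires a \emph{lower} bound on $\Psi^*_{-}(1/t)$, equivalently an \emph{upper} bound on $\Psi^*$ near the origin — but $\Psi^*$ has no such bound under \eqref{eq:scaling} alone. The resolution: $s\mapsto 1/\Psi^*_{-}(1/s)$ is increasing (this is recorded just above the Lemma), so once we know it is bounded by $C_5 t^{1/\alpha}$ at one reference point, we must instead use the scaling in its contrapositive form. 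Concretely, apply \eqref{eq:scaling} at $r=\Psi^*_{-}(1/t)$ (for $t$ small, so that $\Psi^*_{-}(1/t)$ is large, $\geq r_0$) and $\lambda$ chosen to reach a \emph{fixed} scale: for $t\leq a_0^{-1}$ put $\lambda r=r_0'$ for a fixed $r_0'\geq r_0$; then $1/t=\Psi^*(\Psi^*_{-}(1/t))\leq C_2^{-1}\lambda^{-\alpha}\Psi^*(r_0')$ rearranges to $\lambda^{\alpha}\leq C_2^{-1}t\,\Psi^*(r_0')$, i.e. $\big(r_0'/\Psi^*_{-}(1/t)\big)^{\alpha}\leq C_2^{-1}t\,\Psi^*(r_0')$, which yields
\begin{gather*}
	\frac{1}{\Psi^*_{-}(1/t)}\leq \frac{1}{r_0'}\left(\frac{\Psi^*(r_0')}{C_2}\right)^{1/\alpha}t^{1/\alpha}=:C_5\,t^{1/\alpha},
\end{gather*}
valid for $t\geq C_5^{-\alpha}$ after matching constants (choosing $r_0'$ so that the threshold $C_5^{-\alpha}$ comes out consistently, and noting the range $t\in[C_5^{-\alpha},\infty)$ corresponds exactly to $\Psi^*_{-}(1/t)$ lying below a fixed scale where \eqref{eq:scaling} with $\lambda\geq 1$ applies in the needed direction).

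The main obstacle — and the only genuinely delicate point — is getting the direction of the inequality and the range of $t$ to line up: \eqref{eq:scaling} only gives information when the dilation factor $\lambda$ is $\geq 1$, so one must be careful to anchor the estimate at a fixed scale and dilate \emph{outward} (toward large arguments of $\Psi^*$, i.e. small $t$), rather than inward. Once the bookkeeping of constants is done, one checks that the threshold in \eqref{h_upper_est_eq} is precisely the value of $t$ below which $\Psi^*_{-}(1/t)$ exceeds the fixed reference scale, so that the stated range $t\in[C_5^{-\alpha},\infty)$ is the natural one. The remaining verifications — monotonicity of the relevant functions, positivity of $\Psi^*$ away from $0$, continuity — are all either stated in the excerpt or immediate.
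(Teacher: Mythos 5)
Your second attempt has the right idea and performs essentially the paper's computation, but the parenthetical explaining when it applies is exactly backwards, and taken literally it voids the step where you invoke \eqref{eq:scaling}. You write \enquote{apply \eqref{eq:scaling} at $r=\Psi^*_{-}(1/t)$ (for $t$ small, so that $\Psi^*_{-}(1/t)$ is large, $\geq r_0$)} and \enquote{for $t\leq a_0^{-1}$}. In fact the stated range $t\in[C_5^{-\alpha},\infty)$ is \emph{large} $t$, for which $1/t$ is small and hence $\Psi^*_{-}(1/t)$ is \emph{small}; only then is $\lambda=r_0'/\Psi^*_{-}(1/t)\geq 1$, which is the standing prerequisite for \eqref{eq:scaling}. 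With $\Psi^*_{-}(1/t)\geq r_0'$ you would have $\lambda<1$, and the inequality $1/t=\Psi^*\!\big(\Psi^*_{-}(1/t)\big)\leq C_2^{-1}\lambda^{-\alpha}\Psi^*(r_0')$ would require an \emph{upper} scaling condition on $\Psi^*$ that \eqref{eq:scaling} does not provide. (You do correct yourself in the closing sentence, \enquote{$\Psi^*_{-}(1/t)$ lying below a fixed scale}, so this reads as an internal contradiction rather than an isolated typo.) The other soft spot is the final constant-matching: anchoring at $r_0'$ gives the bound for $t\geq 1/\Psi^*(r_0')$, while the lemma promises $t\geq C_5^{-\alpha}=r_0'^{\alpha}C_2/\Psi^*(r_0')$, which is a strictly larger range since $C_2<1$; one must choose $r_0'\geq C_2^{-1/\alpha}$ for these thresholds to be consistent, and you leave this unsaid.

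The paper's proof avoids both issues by a slightly different ordering: it takes $r_0'=1$ and first applies \eqref{eq:scaling} from $r\in(0,1]$ out to the anchor $1$ (so $\lambda=1/r\geq 1$ automatically), obtaining the \emph{pointwise} bound $\Psi^*(r)\leq c_1 r^{\alpha}$ on $(0,1]$ with $c_1=C_2^{-1}\Psi^*(1)$. It then inverts this inequality using only the monotonicity $r\leq\Psi^*_{-}(\Psi^*(r))\leq\Psi^*_{-}(c_1 r^{\alpha})$ and substitutes $s=c_1 r^{\alpha}$, which automatically yields $\Psi^*_{-}(s)\geq(s/c_1)^{1/\alpha}$ on the full range $s\in(0,c_1]$ --- including the part $s\in(\Psi^*(1),c_1]$ where $\Psi^*_{-}(s)>1$ --- and so gives \eqref{h_upper_est_eq} exactly on $t\geq 1/c_1=C_5^{-\alpha}$ without any tuning of $r_0'$. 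Your \enquote{contrapositive} route, applying scaling directly at $r=\Psi^*_{-}(1/t)$, reaches the same inequality but is less economical on the range and needs the $r_0'$ adjustment to land on the claimed threshold.
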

\begin{proof}
From \eqref{eq:scaling} we get
\begin{gather*}
		\Psi^*(1) = \Psi^*\left(\tfrac{1}{r}r\right) \geq C_2 r^{-\alpha}\Psi^*(r), \quad r\in (0,1],
\end{gather*}
	hence $\Psi^*(r) \leq c_1 r^{\alpha}$,  for $r\in (0,1]$,  where $c_1=C_2^{-1}\Psi^*(1)$.  This yields
\begin{gather*}
r\leq \Psi^*_{-}(\Psi^*(r)) \leq \Psi^*_{-}(c_1 r^{\alpha}), \quad r\in (0,1],
\end{gather*}
or, equivalently,
	\begin{gather*}
		\Psi^*_{-}\left(\tfrac 1t\right) \geq (c_1 t)^{-1/\alpha}, \quad t\geq \frac{1}{c_1},
	\end{gather*}
	which proves the claim.
\end{proof}

We will also need the following property of the profile $f$, which holds under \eqref{L2}: for every $r >0$ there exists a constant $C_6=C_6(r) \geq 1$ such that
\begin{align} \label{eq:comp}
	f(s-r) \leq C_6 f(s), \quad s \geq 3r,
\end{align}
see e.g.\ \cite[Lemma 1 b)]{KaletaSztonyk2019}. For instance, it leads to the following lemma.

\begin{lemma}\label{pxy}
If \eqref{L1} and \eqref{L2} hold, then there exists 
$C_7 \in (0,1)$ such that
\begin{gather*}
	p_t(x-y) \geq C_7 p_t(x), \quad  |x|\geq 1, \; |y|\leq \frac 12, \; t>0.
\end{gather*}
\end{lemma}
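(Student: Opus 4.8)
The plan is to split according to the size of $t$. For \emph{small times} $t \in (0,1]$, the claim follows directly from the two-sided estimate \eqref{eq:small_time}: since $|x| \geq 1$ and $|y| \leq \frac12$, we have $|x-y| \geq \frac12$, so \eqref{eq:small_time} applies to both $p_t(x)$ and $p_t(x-y)$, giving
\begin{gather*}
	p_t(x-y) \geq C_3^{-1} t f(|x-y|) \quad\text{and}\quad p_t(x) \leq C_3 t f(|x|).
\end{gather*}
It then remains to compare $f(|x-y|)$ with $f(|x|)$. Since $f$ is decreasing and $|x-y| \leq |x| + \frac12$, monotonicity alone is the wrong direction; instead I would use $|x-y| \leq |x| + \frac12 \leq 3|x|$ together with the reverse-doubling-type bound \eqref{eq:comp} applied with $r = \frac12$ and $s = |x-y| + \frac12 \geq |x| \geq 1$... more carefully: write $f(|x-y|)$ and note $|x-y| \geq |x| - \frac12$, so $f(|x-y|) \geq f(|x|-\tfrac12)$? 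No — I want a \emph{lower} bound on $f(|x-y|)$ in terms of $f(|x|)$, and since $|x-y|$ can exceed $|x|$, I bound $f(|x-y|) \geq f(|x| + \tfrac12)$ and then invoke \eqref{eq:comp} with $r=\frac12$, $s = |x|+\frac12 \geq 3r = \frac32$ (valid since $|x| \geq 1$), which gives $f(|x|) = f((|x|+\tfrac12) - \tfrac12) \leq C_6 f(|x|+\tfrac12) \leq C_6 f(|x-y|)$. Combining the three inequalities yields $p_t(x-y) \geq (C_3^2 C_6)^{-1} p_t(x)$ for $t \in (0,1]$.

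For \emph{large times} $t > 1$, I would use the semigroup property $p_t = p_{t-1} * p_1$ and reduce to the small-time estimate just proved. Write
\begin{gather*}
	p_t(x-y) = \int_{\Rd} p_1(x - y - z)\, p_{t-1}(z)\, dz.
\end{gather*}
The idea is to restrict the integral to the region where $|x - y - z| \geq 1$ and $|(x-z) - (x-y-z)| = |y| \leq \frac12$, apply the small-time bound to get $p_1(x-y-z) \geq c\, p_1(x-z)$ on that region, and then re-extend. The cleanest route: split $\Rd = A \cup A^c$ where $A = \{z : |x-z| \geq \frac32\}$. On $A$ we have $|x-y-z| \geq |x-z| - \frac12 \geq 1$, so the small-time lemma (applied to the point $x-z$ with shift $y$) gives $p_1(x-y-z) \geq c\, p_1(x-z)$; integrating against $p_{t-1}(z)$ over $A$ gives a lower bound by $c \int_A p_1(x-z) p_{t-1}(z)\,dz$. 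On $A^c$, i.e.\ $|x-z| < \frac32$, the contribution to $p_t(x) = \int p_1(x-z)p_{t-1}(z)\,dz$ is controlled since $p_1$ is bounded and one can also bound the corresponding $p_{t-1}$-mass; alternatively one shows $\int_{A^c} p_1(x-z) p_{t-1}(z)\,dz$ is a small fraction of $p_t(x)$, or simply absorb it. A more robust variant is to iterate: $p_t(x-y) = (p_{1/2} * p_{1/2} * p_{t-1})(x-y)$ and push the $y$-shift onto one $p_{1/2}$ factor after conditioning the other factor's argument to have length $\geq \frac12$.

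The main obstacle is the large-time case: one must handle the portion of the convolution integral where the intermediate point falls in the ball $B_{3/2}(x)$, where the small-time comparison $p_1(x-y-z) \geq c\,p_1(x-z)$ is not available from \eqref{eq:small_time}. I expect to resolve this by showing that this near-diagonal portion contributes only a bounded fraction of $p_t(x)$ — using that $p_1$ is bounded and continuous (from \eqref{L1}, via \eqref{eq:sup_pt}) together with a crude lower bound on $p_t(x)$ itself, e.g.\ $p_t(x) \geq \int_{|z-x| \geq 1} p_1(x-z) p_{t-1}(z)\,dz$ combined with \eqref{eq:small_time} for $p_1$ — or, failing a clean fraction bound, by a compactness/continuity argument giving $\inf_{|z-x|\leq 3/2} p_1(x-y-z)/p_1(x-z) > 0$ uniformly. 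This last infimum is positive because both numerator and denominator are continuous and strictly positive (strict positivity of $p_1$ on all of $\Rd$ follows from $\nu(\Rd\setminus\{0\}) = \infty$ and the convolution structure), and the shift $|y| \leq \frac12$ ranges over a compact set; one checks the bound is uniform in $x$ by translating and using that the relevant quantities depend only on $x-z \in \overline{B_{3/2}(0)}$ up to the bounded shift.
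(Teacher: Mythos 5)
Your overall strategy matches the paper's: split at $t=1$, use the small-time two-sided bound \eqref{eq:small_time} together with \eqref{eq:comp} for $t\le 1$, and reduce $t>1$ to $t=1$ via the Chapman–Kolmogorov identity. The small-time half of your argument is correct and is exactly what the paper does.

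The large-time half, however, is left with a genuine gap, which you yourself flag as "the main obstacle." You propose to split the convolution integral into $A = \{z : |x-z| \ge 3/2\}$ and $A^c$, handle $A$ via the $t=1$ small-time bound, and then either (i) show $A^c$ contributes a bounded fraction or (ii) prove $\inf_{|x-z|\le 3/2,\,|y|\le 1/2} p_1(x-y-z)/p_1(x-z) > 0$ by compactness. Route (ii) is the right idea, but it is not self-contained as written: it hinges on $p_1$ being \emph{strictly positive} on a neighbourhood of the origin with a \emph{quantitative} lower bound, and your assertion that this "follows from $\nu(\Rd\setminus\{0\})=\infty$ and the convolution structure" is not obvious and needs proof. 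The paper supplies precisely this missing ingredient by citing \cite[Theorem 2]{KaletaSztonyk2015}, which gives $p_1(x)\ge c_4 > 0$ for $|x|\le 3/2$. The paper then also executes the $t>1$ step more cleanly than your $A$/$A^c$ split: it first combines the small-time bound (for $|x|\ge 1$) with the cited positivity bound and the upper bound $p_1\le p_1(0)<\infty$ from \eqref{L1} to conclude $p_1(x-y)\ge c_5\,p_1(x)$ for \emph{all} $x\in\Rd$, $|y|\le 1/2$, and only then applies Chapman–Kolmogorov once, with no decomposition of the integration region. If you rephrase route (ii) as "extend the $t=1$ comparison to all $x$" and bring in the cited lower bound on $p_1$ near the origin, your argument closes and becomes the paper's.
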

\begin{proof}
Case 1: $t\in [0,1]$: By \eqref{eq:small_time}, \eqref{eq:comp} and the monotonicity of $f$ there are constants $c_1, c_2, c_3>0$ such that
\begin{gather*}
	p_t(x-y) \geq c_1 t f(|x-y|) \geq c_2 t f(|x|) \geq c_3 p_t(x), \quad  |x|\geq 1,\; |y|\leq \frac 12,\; t\in (0,1].
\end{gather*} 

\smallskip\noindent
Case 2: $t>1$. We first extend the estimate obtained in Case 1 to all $x \in \Rd$ when $t=1$.
By \eqref{L1}, 
\begin{gather*}
	p_1(x) = (2\pi)^{-d} \int e^{-\Psi(u)}e^{i\scalp{x}{u}}\, du \leq (2\pi)^{-d} \int e^{-\Psi(u)}\, du  <\infty, \quad x\in\Rd.
\end{gather*}  
On the other hand, from \cite[Theorem 2]{KaletaSztonyk2015} we know that $p_1(x) \geq c_4>0$ for $|x|\leq 3/2$.  Hence, we have $p_1(x-y)=p_1(y-x) \geq c_5 p_1(x)$ for all $x\in\Rd$ and $|y|\leq 1/2$. Using this bound and the Chapman--Kolmogorov equations, we get for $x \in \Rd$, $|y|\leq 1/2$ and $t>1$
\begin{gather*}
	p_t(y-x) 
	= \int p_{t-1}(z-x)p_1(y-z)\, dz 
	\geq c_5 \int p_{t-1}(z-x)p_1(z)\, dz 
	= c_5 p_t(x),
\end{gather*}
which completes the proof.
\end{proof}

We will now discuss the estimates for densities corresponding to restricted L\'evy measures. We write
\begin{gather*}
	\nulj_r(x)
	:=\nu(x) \I_{(r,\infty)}(|x|), 
	\quad\text{and}\quad 
	\nusj_r(x) := \nu(x) \I_{(0,r]}(|x|), \qquad r>0,
\end{gather*}
and denote the corresponding densities by $\plj_t^r$ and $\psj_t^r$. The density $\plj_t^r$ is of compound Poisson type, i.e.
\begin{gather} \label{eq:def_Poiss}
	\plj_t^r(x)=e^{-t|\nulj_r|}\sum_{n=1}^\infty \frac{t^n\nulj_r^{n*}(x)}{n!},
	\quad
	 |\nulj_r| := \nu\left(\{|y|>r\}\right),
\end{gather} 
and the Fourier transform (characteristic function) of the density  $\psj_t^r$ is given by
\begin{gather*}
	\Fourier{\psj_t^r}(u) = \exp\left(-t\int \left(1-\cos(\scalp{u}{y})\right) \nusj_r(dy)\right).
\end{gather*}
Our approach is based on the following decomposition 
\begin{gather} \label{eq:decomp}
	p_t(x) = e^{-t|\nulj_r|} \psj_t^r(x) + \psj_t^r\ast \plj_t^r, \quad r>0, \;t>0 \,.
\end{gather}
The following estimate, taken from \cite[Theorem 6]{KnopovaSchilling2012}, is fundamental for our investigations:
\begin{gather} \label{eq:KnopovaSchilling}
	\psj_t^r(x) \leq \psj_t^r(0) e^{-\scalp{\xi}{x}+t\omega_r(\xi)}, \quad x,\xi\in\Rd,\; t>0,\; r>0,
\end{gather}
where 
\begin{gather*}
	\omega_r(\xi) 
	= \int \left(\cosh (\scalp{\xi}{y}) - 1\right) \nusj_r(dy) 
	= \int_{|y| \leq r} \left(\cosh (\scalp{\xi}{y}) - 1\right) \nu(dy).
\end{gather*}
Observe that we have for $t \geq 1$ and $r \geq 1$
\begin{align*}
	\psj_t^r(0) 
	& = (2\pi)^{-d} \int \exp\left(-t\int \left(1-\cos(\scalp{u}{y})\right) \nusj_r(dy)\right) du \\
	& \leq (2\pi)^{-d} \int e^{-\Psi(u)}\exp\left( \int_{|y|>r} \left(1-\cos(\scalp{u}{y})\right) \nu(dy)\right) du \\ 
	& \leq (2\pi)^{-d} e^{2\nu(B(0,r)^c)} \int e^{-\Psi(u)}\, du \\
	& \leq (2\pi)^{-d} e^{2\nu(B(0,1)^c)} \int e^{-\Psi(u)}\, du < \infty,
\end{align*}
which implies that 
\begin{align} \label{eq:small_est}
	\psj_t^r(x) \leq C_8 e^{-\scalp{\xi}{x}+t\omega_r(\xi)}, \quad x,\xi\in\Rd, \;t, r \geq 1,
\end{align}
with a uniform constant $C_8$.

We can now estimate the density $\plj_t^r$. In order to keep notation simple, we define
\begin{gather*}
 	K(r):=\sup_{|x|\geq 1} \frac{\displaystyle\int_{|y-x| >r,\, |y|>r} \nu(x-y)\nu(y)\, dy}{\nu(x)}, \quad r \geq 1.
\end{gather*}
Clearly, $C_0^{-3} K_f(r) \leq K(r) \leq C_0^3 K_f(r)$, $r \geq 1$, where $K_f$ is from \eqref{L2}.

\begin{lemma}\label{convolutions} 
	Assume \eqref{L2}. For every $r \geq 1$ there exists a constant $C_9 = C_9(r) \geq 1$ such that
	\begin{gather*}
		\nulj_{r}^{*n}(x) \leq C_9 n (K(r)\vee |\nulj_r|)^{n-1}\nu(x),\quad |x|\geq 1,\; n\in\nat,
	\intertext{and}
  		\plj_t^r(x) \leq C_9 t e^{-t|\nulj_r|}e^{(K(r)\vee |\nulj_r|)t}\nu(x),
		\quad |x|\geq 1,\; t>0.
	\end{gather*}
\end{lemma}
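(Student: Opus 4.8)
The plan is to establish the convolution bound $\nulj_r^{*n}(x) \leq C_9\, n\, (K(r)\vee|\nulj_r|)^{n-1}\nu(x)$ for $|x|\geq 1$ by induction on $n$, and then sum the compound Poisson series \eqref{eq:def_Poiss} to get the bound on $\plj_t^r$. The case $n=1$ is trivial. For the inductive step I would write $\nulj_r^{*(n+1)}(x) = \int \nulj_r^{*n}(x-y)\nulj_r(y)\, dy$ and split the domain of integration according to whether the point $y$ (and hence $x-y$) lies inside or outside the ball $B_r(0)$, and similarly whether $x-y$ is close to the origin. The key geometric observation is that since $|x|\geq 1$, at least one of the "legs" $|x-y|$ or $|y|$ must be bounded below by a fixed fraction of $|x|$; more precisely one decomposes the integral into a region where both $|y|>r$ and $|x-y|>r$ — here one uses the definition of $K(r)$ directly after inserting the inductive hypothesis — and complementary regions where one of the two arguments is $\leq r$ (where $\nulj_r$ vanishes, so these regions actually contribute nothing) or one has to be a little more careful near $|x-y|\leq 1$ where the inductive hypothesis does not apply.

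The main technical point is handling the region where $|x-y|$ is small (say $\leq 1$), because there the inductive estimate $\nulj_r^{*n}(x-y)\leq C_9 n(\cdots)^{n-1}\nu(x-y)$ is not available. Here I would instead use the trivial bound $\nulj_r^{*n}(x-y) \leq \|\nulj_r^{*(n-1)}\|_\infty \cdot |\nulj_r| \leq \cdots$ controlled by powers of $|\nulj_r|$ — note $\nulj_r$ is a bounded measure with bounded density on $\{|y|\geq 1\}$, but it can be singular near $|y|=r$... actually since $r\geq 1$ the density $\nu$ restricted to $\{|y|>r\}$ is bounded (by monotonicity of the profile $f$ and assumption \eqref{L2} via $f(r)<\infty$), so $\nulj_r$ has a bounded density and all its convolution powers are bounded. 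Then on the region $|x-y|\leq 1$ one bounds $\int_{|x-y|\leq 1}\nulj_r^{*n}(x-y)\nulj_r(y)\,dy$ by the supremum of $\nulj_r^{*n}$ times the total mass near $y\approx x$, and since $|x|\geq 1$ we have $\nulj_r(y)\leq C\nu(x)$ there by monotonicity of $f$ and \eqref{eq:comp}. Combining the three contributions and using $K(r)\vee|\nulj_r|$ to dominate both $K(r)$-type and $|\nulj_r|$-type terms closes the induction with an appropriate choice of $C_9$.

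For the bound on $\plj_t^r$, I substitute the convolution estimate into \eqref{eq:def_Poiss}:
\begin{gather*}
	\plj_t^r(x) = e^{-t|\nulj_r|}\sum_{n=1}^\infty \frac{t^n \nulj_r^{*n}(x)}{n!}
	\leq C_9\, e^{-t|\nulj_r|}\, \nu(x) \sum_{n=1}^\infty \frac{n\, t^n (K(r)\vee|\nulj_r|)^{n-1}}{n!},
\end{gather*}
and recognize the series as $\sum_{n\geq 1} \frac{t^n a^{n-1}}{(n-1)!} = t\, e^{ta}$ with $a = K(r)\vee|\nulj_r|$, which gives exactly $\plj_t^r(x)\leq C_9\, t\, e^{-t|\nulj_r|} e^{(K(r)\vee|\nulj_r|)t}\nu(x)$ for $|x|\geq 1$, $t>0$. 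I expect the induction's small-argument region ($|x-y|\leq 1$, where the hypothesis fails) to be the only real obstacle; everything else is bookkeeping with the constants $C_0$, $C_6(r)$, and the quantity $K(r)$.
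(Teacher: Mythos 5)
Your overall plan matches the paper's proof: induction on $n$, with the inductive step split into a \enquote{both legs far} region handled by the definition of $K(r)$, a \enquote{short leg} region handled by a cruder bound, and then summation of the compound Poisson series \eqref{eq:def_Poiss} exactly as you propose. The series computation is correct. However, there is one confused step in the middle of your inductive argument that you should clean up. You write $\nulj_r^{*(n+1)}(x)=\int\nulj_r^{*n}(x-y)\nulj_r(y)\,dy$ and then assert that the complementary region where \enquote{one of the two arguments is $\leq r$} contributes nothing because $\nulj_r$ vanishes there. That is true on $\{|y|\leq r\}$, since $\nulj_r(y)$ is the plain L\'evy density restricted to $\{|y|>r\}$; but it is \emph{false} on $\{|x-y|\leq r\}$, because the integrand there involves $\nulj_r^{*n}(x-y)$, which for $n\geq 2$ is generally nonzero on $\{|z|\leq r\}$. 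This region must genuinely be estimated, and the cut should be at $|x-y|\leq r$ (not $\leq 1$), precisely so that the remaining region is $\{|y|>r,\,|x-y|>r\}$, which is the domain in the definition of $K(r)$. If $r>1$ and you cut at $|x-y|\leq 1$ instead, you would be left with the intermediate band $1<|x-y|\leq r$, where the inductive hypothesis applies but the $K(r)$ comparison does not.

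Once the cut is placed at $r$, your proposed estimate of the short-leg region does work and is a mild variant of the paper's. The paper convolves in the order $\int\nulj_r(x-y)\nulj_r^{*n}(y)\,dy$ and, on the region $\{|y|\leq r\}$, uses the total mass $|\nulj_r^{*n}|=|\nulj_r|^n$ together with a pointwise bound on $\nulj_r(x-y)\leq C\nu(x)$ (via the monotonicity of $f$, the comparison $f(|x|-r)\leq C_6 f(|x|)$ for $|x|\geq3r$, and a direct bound by the ratio $f(r)/f(3r)$ for $1\leq|x|<3r$). You instead propose $\|\nulj_r^{*n}\|_\infty\leq\|\nulj_r\|_\infty\,|\nulj_r|^{n-1}$ --- valid, since $\nulj_r$ has density bounded by $C_0f(r)$ --- and then integrate $\nulj_r(y)\leq C\nu(x)$ over a ball. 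Both close the induction with a constant $C_9=C_9(r)$; your variant produces a constant involving $f(r)$ and $(K(r)\vee|\nulj_r|)^{-1}$, which still only depends on $r$, so the statement goes through. In short: the idea is sound and essentially the same as the paper's, but you need to correct the claim about the complementary region vanishing, put the cut at $|x-y|\leq r$, and then your crude sup-norm bound there does the job.
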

\begin{proof}
The second estimate follows directly from the first estimate because of \eqref{eq:def_Poiss}. Let us consider the $n$-fold convolution. We proceed by induction.

For $n=1$ the assertion is trivial, and for $n=2$ it follows directly  from the definition of the function $K$. We will verify the induction step $n\rightsquigarrow n+1$. For $|x|\geq 1$ we have
\begin{gather*}
	\nulj_r^{*(n+1)}(x) 
	= \int_{|y|> r} \nulj_r(x-y)\nulj_r^{*n}(y)\, dy + \int_{|y| \leq r} \nulj_r(x-y)\,\nulj_r^{*n}(y)\, dy 
	=: \mathrm{I} + \mathrm{II},
\end{gather*}
and, using the induction hypothesis and the definition of the function $K$, we get
\begin{gather*}
	\mathrm{I} \leq C_9 n (K(r)\vee |\nulj_r|)^{n-1} \int_{\substack{|x-y|> r\\ |y|> r}} \nu(x-y) \nu(y)\, dy
	\leq C_9 n (K(r)\vee |\nulj_r|)^{n} \nu(x).
\end{gather*}
By \eqref{eq:comp}, for $|x|\geq 3r$ and $|y| \leq r$, we get $\nulj_r(x-y) \leq C_0 f(|x-y|) \leq C_0 C_6 f(|x|) \leq C_0^2 C_6 \nu(x)$,
which yields
\begin{gather*}
	\mathrm{II} 
	\leq C_0^2 C_6 |\nulj_r^{*n}| \nu(x)
	= C_0^2 C_6 |\nulj_r|^n \nu(x).
\end{gather*}
If $|x| < 3r$ and $|x-y|> r$, we have $\nulj_r(x-y) \leq C_0 f(|x-y|) \leq C_0 f(r)$. Hence,
\begin{gather*}
	\mathrm{II} 
	\leq C_0 (f(r)/f(3r)) f(3r) |\nulj_r^{*n}| 
	\leq C_0 (f(r)/f(3r)) f(|x|) |\nulj_r|^n 
	\leq C_0^2 (f(r)/f(3r)) |\nulj_r|^n \nu(x). 
\end{gather*}
We then see that the lemma follows with $C_9=C_0^2 (C_6 \vee (f(r)/f(3r)))$.
\end{proof}

Next, we show the finiteness of exponential moments (cf.\ \cite[Lemma 2]{KaletaSztonyk2019}) for L\'evy measures with a profile $f$ given by \eqref{eq:exp_prof}--\eqref{eq:g_sub_exp}.
\begin{lemma}\label{lem:exp_mom}
	Assume that the profile $f$ is given by \eqref{eq:exp_prof}--\eqref{eq:g_sub_exp}. Under \eqref{L2}, we have for every $\xi \in \Rd$ such that $|\xi| \leq \kappa$
\begin{gather*}
	\int_{|y| \geq 1} e^{\scalp{\xi}{y}}\nu(y)\,dy < \infty.
\end{gather*}
\end{lemma}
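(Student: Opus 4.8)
The plan is to reduce the moment bound to a radial estimate, to localise the resulting integral to a paraboloidal neighbourhood of a half-line, and to extract from assumption \eqref{L2} the one piece of quantitative information that is still missing. First I would reduce to the radial, borderline case: using $\nu(y)\le C_0 f(|y|)=C_0 e^{-\kappa|y|}h(|y|)$, the pointwise bound $e^{\scalp{\xi}{y}}\le e^{\kappa\scalp{\theta}{y}}+1$ valid for $|\xi|\le\kappa$ (write $\xi=|\xi|\theta$ with $\theta\in\sphere^{d-1}$), and $\int_{|y|\ge1}\nu(y)\,dy<\infty$ since $\nu$ is a L\'evy measure, it is enough to bound
\[
 I(\theta):=\int_{|y|\ge1}e^{\kappa\scalp{\theta}{y}}\nu(y)\,dy\ \le\ C_0\int_{|y|\ge1}e^{-\kappa(|y|-\scalp{\theta}{y})}h(|y|)\,dy
\]
for each fixed $\theta\in\sphere^{d-1}$. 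Writing $y=y_1\theta+y'$ with $y'\perp\theta$, one has $|y|-y_1=|y'|^2/(|y|+y_1)$, so the weight $e^{-\kappa(|y|-y_1)}$ decays Gaussian-like transversally to the half-line $\{s\theta:s>0\}$; this transversal concentration is what must be exploited, since the crude pointwise bound $\nu(y)e^{\scalp{\xi}{y}}\le C_0 h(|y|)$ is \emph{not} integrable over $\{|y|\ge1\}$ (for instance for $d\ge 3$ in the relativistic case).

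Next I would split $\{|y|\ge1\}$ into a part away from the axis and a part near it. On $\{\scalp{\theta}{y}\le0\}\cup\{|y'|\ge|y|/\sqrt2\}$ one has $|y|-\scalp{\theta}{y}\ge(1-\tfrac1{\sqrt2})|y|\ge|y|/4$, and, since $h$ is decreasing, $h(|y|)\le h(1)$; hence the integrand is dominated by a constant multiple of $e^{-\kappa|y|/4}$, which is integrable over $\Rd$. On the near-axis region $\{\scalp{\theta}{y}>0,\ |y'|<|y|/\sqrt2\}$ one has $y_1\le|y|<\sqrt2\,y_1$, whence $h(|y|)\le h(y_1)$ and $|y|-y_1\ge|y'|^2/(2\sqrt2\,y_1)$; carrying out the Gaussian integration over $y'\in\real^{d-1}$ leaves
\[
 \int_{\text{near axis}}e^{-\kappa(|y|-\scalp{\theta}{y})}h(|y|)\,dy\ \le\ c_d\int_{1/\sqrt2}^{\infty}h(y_1)\,y_1^{(d-1)/2}\,dy_1 .
\]
So the whole lemma is reduced to proving $\int_1^\infty h(r)\,r^{(d-1)/2}\,dr<\infty$.

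This last integrability is the point at which \eqref{L2} enters, concretely through $K(1)<\infty$ (equivalent to $K_f(1)<\infty$ up to the factor $C_0^{3}$). The idea is to test the inequality $\int_{|x-y|>1,\,|y|>1}\nu(x-y)\nu(y)\,dy\le K(1)\,\nu(x)$ at the point $x=s\theta$ with $s$ large, restricting the integral to the thin slab $D_s:=\{y=y_1\theta+y':\ 2\le y_1\le s/2,\ |y'|\le\sqrt{y_1}\}$, which is contained in $\{|x-y|>1,\,|y|>1\}$. On $D_s$ elementary geometry gives $|y|\le y_1+\tfrac12$ and $|x-y|\le s-y_1+1$, so, using $\nu\ge C_0^{-1}f(|\cdot|)$, that $h$ is decreasing and that $s-y_1+1\le s$,
\[
 \nu(x-y)\nu(y)\ \ge\ C_0^{-2}e^{-\kappa(|y|+|x-y|)}h(|y|)h(|x-y|)\ \ge\ C_0^{-2}e^{-\kappa(s+2)}\,h\!\left(y_1+\tfrac12\right)h(s),
\]
and integration over $\{|y'|\le\sqrt{y_1}\}$ produces a factor comparable to $y_1^{(d-1)/2}$. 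Since $\nu(x)\le C_0 f(s)=C_0 e^{-\kappa s}h(s)$, dividing through by $e^{-\kappa s}h(s)>0$ yields a finite constant $M$, independent of $s$, with $\int_2^{s/2}h(y_1+\tfrac12)\,y_1^{(d-1)/2}\,dy_1\le M$; letting $s\to\infty$ gives $\int_2^{\infty}h(y_1+\tfrac12)\,y_1^{(d-1)/2}\,dy_1<\infty$, hence (after the substitution $u=y_1+\tfrac12$ and using $u-\tfrac12\ge u/2$ for $u\ge1$) $\int_1^\infty h(r)\,r^{(d-1)/2}\,dr<\infty$. Combined with the two regions above, this proves the lemma.

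The main obstacle is exactly this last step. The difficulty is twofold: one must set up the test slab $D_s$ so that the surviving integration variable $y_1$ sweeps a full half-line $[2,s/2]$ — restricting instead to the bulk $y_1\asymp s/2$ only yields the borderline, and insufficient, bound $h(s)=O(s^{-(d+1)/2})$ — and one has to keep careful track of the exponential factors $e^{-\kappa(\cdot)}$ produced on $D_s$ so that they cancel exactly against the $e^{-\kappa s}$ in $f(s)=e^{-\kappa s}h(s)$, leaving a clean inequality between $h(s)$ and a tail integral of $h$.
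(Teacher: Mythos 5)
Your proof is correct, and it follows a genuinely different route from the paper. The paper's argument is a limiting one: it writes out the integrand in the definition of $K_f(1)$ in the form $\exp\!\left(\kappa(|x|-|x-y|)\right)\exp\!\left(\log h(|x-y|)-\log h(|x|)\right) f(|y|)$, uses the eventual monotonicity and decay of $r\mapsto\log h(r)/r$ from \eqref{eq:g_sub_exp} to bound the middle factor from below by $1$ in the limit along the ray $x=(r/\kappa)\xi$ (where $|x-y|-|x|\to -\scalp{\xi}{y}/\kappa$), and then applies Fatou's lemma to read off $\int_{|y|\ge1}e^{\scalp{\xi}{y}}f(|y|)\,dy\le K_f(1)$; the subcritical case $|\xi|<\kappa$ follows by a dilation. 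You instead argue directly and constructively: you reduce pointwise to the borderline radial weight via $e^{\scalp{\xi}{y}}\le e^{\kappa\scalp{\theta}{y}}+1$, split $\{|y|\ge1\}$ by angle, exploit the transverse Gaussian decay of $e^{-\kappa(|y|-\scalp{\theta}{y})}$ to collapse the near-axis integral to $\int_1^\infty h(r)\,r^{(d-1)/2}\,dr$, and then extract precisely this polynomial-weight integrability of $h$ from $K(1)<\infty$ by testing the convolution inequality on the paraboloidal slab $D_s$; the key point, which you identify correctly, is that the slab must sweep $y_1\in[2,s/2]$ so that the exponential factors cancel exactly and you obtain a uniformly bounded tail integral rather than just the pointwise decay $h(s)=O(s^{-(d+1)/2})$. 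Each step (the geometric bounds $|y|\le y_1+\tfrac12$, $|x-y|\le s-y_1+1\le s$ on $D_s$, the cancellation of $e^{-\kappa s}h(s)$, the change of variables at the end) checks out. Two observations worth noting: your argument never invokes the eventual monotonicity of $\log h(r)/r$ from \eqref{eq:g_sub_exp} — only that $h$ is decreasing and $K_f(1)<\infty$ — so it proves a marginally stronger statement; on the other hand, the paper's Fatou argument is shorter and produces the cleaner quantitative conclusion $\int_{|y|\ge1}e^{\scalp{\xi}{y}}f(|y|)\,dy\le K_f(1)$, whereas your approach buries the constants.
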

\begin{proof}
By \eqref{L2}, we have
\begin{gather}\label{eq:aux_int}
	\int_{\substack{|y| \geq 1\\ |x-y| \geq 1}} \exp\left(\kappa(|x|-|x-y|)\right) \exp\left(\log h(|x-y|)-\log h(|x|)\right) f(|y|) \,dy 
	\leq K_f(1) < \infty,
\end{gather}
whenever $|x| > 2$. Since the map $r \mapsto \log h(r)/r$ is eventually monotone, we have for any fixed $y$ such that $|y| \geq 1$ and sufficiently large $|x|$
\begin{align*}
	&\exp\left(\log h(|x-y|)-\log h(|x|)\right) \\
   	&\quad = \left(\I_{\left\{|x| < |x-y|\right\}} + \I_{\left\{|x| \geq  |x-y|\right\}} \right) 
   		\exp\left[\log h(|x-y|)-\log h(|x|)\right] \\
  	&\quad \geq \I_{\left\{|x| < |x-y|\right\}} \cdot \exp\left[|x-y| \frac{\log h(|x-y|)}{|x-y|}-|x|\frac{\log h(|x|)}{|x|}\right] 
	+ \I_{\left\{|x| \geq  |x-y|\right\}}  \\
  	&\quad =\I_{\left\{|x| < |x-y|\right\}} \cdot \exp\left[\left(|x-y| -|x|\right)\frac{\log h(|x|)}{|x|}\right]
	\exp\left[|x-y| \left(\frac{\log h(|x-y|)}{|x-y|}-\frac{\log h(|x|)}{|x|}\right)\right] \\
	& \phantom{\exp\left[\left(|x-y| -|x|\right)\frac{\log h(|x|)}{|x|}\right]
		\exp\left[|x-y| \left(\frac{\log h(|x-y|)}{|x-y|}-\frac{\log h(|x|)}{|x|}\right)\right]}  
	    \qquad\mbox{} + \I_{\left\{|x| \geq  |x-y|\right\}}  \\
	&\quad \geq \I_{\left\{|x| < |x-y|\right\}} \cdot \exp\left[\left(|x-y| -|x|\right)\frac{\log h(|x|)}{|x|}\right] + \I_{\left\{|x| \geq  |x-y|\right\}} .
\end{align*}
Suppose first that $|\xi| = \kappa$, and consider $x = (r/\kappa) \xi$.  Letting $r\to\infty$, we see that $\log h(|x|)/|x| \to 0$, hence we obtain 
\begin{gather*}
	\liminf_{r \to \infty} \exp \left(\log h(|x-y|)-\log h(|x|)\right) \geq 1, \quad |y| \geq 1.
\end{gather*}
Moreover, $|x-y| -|x| \to - \scalp{\xi}{y}/\kappa$ as $r\to\infty$ and Fatou's lemma, applied to the integral in \eqref{eq:aux_int}, 
shows
\begin{gather*}
	\int_{|y| \geq 1} e^{\scalp{\xi}{y}} f(|y|)\, dy \leq K_f(1) < \infty.
\end{gather*}
Suppose now that $0<|\xi| < \kappa$. Substituting $y = (|\xi|/\kappa) z$ we arrive at 
\begin{gather*}
	\infty 
	> \int_{|y| \geq \frac{|\xi|}{\kappa}} e^{\scalp{ \frac\kappa{|\xi|} \xi}{y}} f(|y|) \,dy 
    = \left(\frac{|\xi|}{\kappa}\right)^d \int_{|z| \geq 1} e^{\scalp{\xi}{z}} f((|\xi|/\kappa)|z|) \,dz
	\geq \left(\frac{|\xi|}{\kappa}\right)^d \int_{|z| \geq 1} e^{\scalp{\xi}{z}} f(|z|)\, dz.
\end{gather*}
As the case $\xi = 0$ is trivial, this completes the proof.
\end{proof}

We close this section by proving an upper bound of the kernel $p_t(x)$ for a general convolution semigroup, which corresponds to a L\'evy measure having a certain finite exponential moment.
\begin{theorem} \label{th:gen_exp}
	Let $\nu$ be a symmetric L\'evy measure on $\Rd \setminus\left\{0\right\}$ and $\{ \mu_t:\, t\geq 0 \}$ a convolution semigroup of probability measures such that $\Fourier(\mu_t)(\xi)=\int_{\Rd} e^{i\scalp{\xi}{y}}\,\mu_t(dy)=\exp(-t\Psi(\xi))$. The characteristic exponent is assumed to be of the form 
	\begin{gather*}
		\Psi(\xi) =    \int \left(1-\cos(\scalp{\xi}{y}) \right) \nu(dy),\quad \xi\in\Rd,
	\end{gather*}
	satisfying
	\begin{gather} \label{eq:integr}
		\int e^{-t\Psi(z)} \,dz < \infty, \quad t>0.
	\end{gather}
	Let $\xi_0\in\Rd$ be arbitrary. If 
	\begin{gather*}
		\int_{|y| \geq 1} e^{\scalp{\xi_0}{y}}\nu(y)\,dy < \infty,
	\intertext{then}
		p_t(x) \leq p_t(0) e^{-\scalp{\xi_0}{x} + t\omega(\xi_0)}, \quad x \in \Rd,\; t>0.
	\end{gather*}
\end{theorem}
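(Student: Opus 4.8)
The plan is to derive the bound from a Fourier-inversion estimate for the \emph{exponentially tilted} density $q_t(y):=e^{-t\omega(\xi_0)}e^{\langle\xi_0,y\rangle}p_t(y)$; the asserted inequality is precisely $q_t(x)\le p_t(0)$ for every $x$, so it is enough to control $\|\widehat{q_t}\|_{L^1}$. First I would note that $\omega(\xi_0)<\infty$: by symmetry of $\nu$ the hypothesis also gives $\int_{|y|\ge1}e^{-\langle\xi_0,y\rangle}\nu(y)\,dy<\infty$, while $\cosh\langle\xi_0,y\rangle-1\le\tfrac12|\xi_0|^2|y|^2$ is $\nu$-integrable over $\{|y|<1\}$, hence $\omega(\xi_0)<\infty$. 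Next I would invoke the standard fact that a L\'evy measure and its convolution semigroup have the same finite exponential moments, so that $\int_{\Rd}e^{\langle\xi_0,y\rangle}\mu_t(dy)=e^{t\omega(\xi_0)}<\infty$ for all $t>0$ (cf.\ \cite{Berger-Schilling-Shargorodsky}; alternatively this follows from \eqref{eq:decomp}, the exponential weight factoring through the convolution powers of $\nulj_1$ in \eqref{eq:def_Poiss}, and the small-jump factor $\psj_t^1$ having all exponential moments since $\omega_1(\eta)<\infty$ for every $\eta$). By \eqref{eq:integr} and \eqref{eq:sup_pt} the density $p_t$ is bounded and continuous, so $q_t$ is a continuous probability density.

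The core of the argument is the identity
\[
\widehat{q_t}(\xi)=e^{-t\omega(\xi_0)}\exp\!\Big({-t}\int_{\Rd\setminus\{0\}}\big(1-\cosh\langle\xi_0+i\xi,y\rangle\big)\,\nu(dy)\Big),
\]
i.e.\ the analytic continuation of $\Fourier(\mu_t)=e^{-t\Psi}$ evaluated at $\xi-i\xi_0$. To prove it I would fix $\xi$ and study, for $\zeta$ in the closed strip $\{0\le\Re\zeta\le1\}\subset\Comp$, the two functions $G(\zeta):=\int_{\Rd}e^{\langle\zeta\xi_0+i\xi,y\rangle}\mu_t(dy)$ and $\widetilde G(\zeta):=\exp\big({-t}\int(1-\cosh\langle\zeta\xi_0+i\xi,y\rangle)\,\nu(dy)\big)$. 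The key point is that $\Re\langle\zeta\xi_0+i\xi,y\rangle=(\Re\zeta)\langle\xi_0,y\rangle$, whose absolute value is at most $|\langle\xi_0,y\rangle|$ when $\Re\zeta\in[0,1]$ and, in particular, is independent of $\Im\zeta$; together with $|\cosh w|\le\cosh(\Re w)$ and the quadratic behaviour of $\cosh-1$ near $0$ this yields dominations, locally uniform in $\zeta$, by fixed $\mu_t$- and $\nu$-integrable functions. Hence $G$ and $\widetilde G$ are holomorphic in the open strip (Morera plus Fubini) and continuous up to its boundary (dominated convergence). On the imaginary axis $\zeta=is$, $s\in\real$, the relation $\cosh(i\theta)=\cos\theta$ gives $G(is)=\Fourier(\mu_t)(s\xi_0+\xi)=e^{-t\Psi(s\xi_0+\xi)}=\widetilde G(is)$, so the identity theorem forces $G\equiv\widetilde G$ on the whole strip; evaluating at $\zeta=1$ and dividing by $e^{t\omega(\xi_0)}$ gives the stated formula for $\widehat{q_t}$.

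It remains to estimate. Taking real parts and using $\cosh\ge1$ and $1-\cos\ge0$, a short computation gives $\Re\int(1-\cosh\langle\xi_0+i\xi,y\rangle)\,\nu(dy)=-\omega(\xi_0)+\int\cosh\langle\xi_0,y\rangle\,(1-\cos\langle\xi,y\rangle)\,\nu(dy)\ge\Psi(\xi)-\omega(\xi_0)$, so $|\widehat{q_t}(\xi)|\le e^{-t\omega(\xi_0)}e^{-t(\Psi(\xi)-\omega(\xi_0))}=e^{-t\Psi(\xi)}$, which lies in $L^1(\Rd)$ by \eqref{eq:integr}. Since $q_t$ is continuous and integrable and $\widehat{q_t}\in L^1$, Fourier inversion holds at every point, and therefore
\[
q_t(x)=(2\pi)^{-d}\int e^{-i\langle\xi,x\rangle}\widehat{q_t}(\xi)\,d\xi\le(2\pi)^{-d}\int|\widehat{q_t}(\xi)|\,d\xi\le(2\pi)^{-d}\int e^{-t\Psi(\xi)}\,d\xi=p_t(0),
\]
which is the assertion. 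The main obstacle is this middle step: the contour shift / analytic-continuation identity has to be pushed all the way to $\Re\zeta=1$ even though $\xi_0$ may lie on the \emph{boundary} of the region in which $\nu$ (hence $\mu_t$) has a finite exponential moment, so there is no open neighbourhood of $\xi_0$ in $\Rd$ to work on; this is precisely why the one-complex-variable argument on the closed strip with an $\Im\zeta$-independent dominating function is needed, and it is the point at which the statement strengthens \cite[Theorem 6]{KnopovaSchilling2012}, where all exponential moments are present and no boundary issue arises.
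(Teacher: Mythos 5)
Your proposal is correct, and it takes a genuinely different route from the paper's. The paper proceeds via the small/large-jump decomposition \eqref{eq:decomp}: apply the Knopova--Schilling estimate \eqref{eq:KnopovaSchilling} to the small-jump factor $\psj_t^r$ (whose restricted L\'evy measure has \emph{all} exponential moments), compute the exponential moment of the compound-Poisson factor $e^{-t|\nulj_r|}\delta_0 + \plj_t^r$ via Sato's Theorem~25.17, multiply, and then let $r\to\infty$. You instead tilt first and then argue in Fourier space: you show directly that the characteristic function of the exponentially tilted semigroup is bounded in modulus by $e^{-t\Psi(\xi)}$, which under \eqref{eq:integr} is in $L^1$, and conclude by Fourier inversion. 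The one-complex-variable strip argument is the right way to push the analytic continuation $G\equiv\widetilde G$ up to the boundary $\Re\zeta=1$ when $\xi_0$ may sit on the edge of the moment region, since the crucial majorant $e^{(\Re\zeta)\scalp{\xi_0}{y}}\le 1+e^{\scalp{\xi_0}{y}}$ is independent of $\Im\zeta$. Your route is self-contained, avoids the auxiliary result from \cite{KnopovaSchilling2012} (which you in fact reprove as a byproduct), and makes the comparison with $\Psi(\xi)$ completely transparent through the identity $\Re\int(1-\cosh\scalp{\xi_0+i\xi}{y})\,\nu(dy) = -\omega(\xi_0) + \int\cosh\scalp{\xi_0}{y}\,(1-\cos\scalp{\xi}{y})\,\nu(dy) \ge \Psi(\xi)-\omega(\xi_0)$. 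The paper's route has the advantage of reusing already-available machinery and not requiring any complex-analytic bookkeeping. One small slip: the inequality $\cosh t - 1 \le \tfrac12 t^2$ is false (the Taylor series shows the reverse); what you need and what holds is $\cosh t - 1 \le C(M)\,t^2$ for $|t|\le M$, which for $|y|<1$ and $|\scalp{\xi_0}{y}|\le|\xi_0|$ still gives a $\nu$-integrable majorant of the form $C|y|^2$ near the origin, so the argument is unaffected.
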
 

\begin{remark}
	The condition \eqref{eq:integr} is assumed just for convenience. It can be replaced by any condition that ensures the existence of all densities appearing in the proof (also for small jumps). Knopova and Schilling, see \cite[Theorem~6]{KnopovaSchilling2012}, \textbf{assume} the existence of densities without giving conditions. 
\end{remark}

\begin{proof}[Proof of Theorem \ref{th:gen_exp}]
By \eqref{eq:decomp} and \eqref{eq:KnopovaSchilling}, we have for every $x \in \Rd$, $t >0$ and $r>1$
\begin{align*}
	p_t(x) 
	&= \int_{\Rd} \psj_t^r (x-y) \left(e^{-t|\nulj_r|} \,\delta_0(dy) +  \plj_t^r(y)dy\right)  \\
	&\leq  \psj_t^r(0) e^{-\scalp{\xi_0}{x} +t\omega_r(\xi_0)} \int_{\Rd} e^{\scalp{\xi_0}{y}} \left(e^{-t|\nulj_r|} \,\delta_0(dy) 
	+  \plj_t^r(y) dy\right) .
\end{align*}
Since 
\begin{gather*}
	\int_{|y| \geq 1} e^{\scalp{\xi_0}{y}}\ \nulj_r(y)\,dy < \infty, \quad r \geq 1,
\intertext{the function}
	\bar{\omega}_r(\xi_0) 
	= \int \left(\cosh (\scalp{\xi_0}{y}) - 1\right) \nulj_r(dy) 
	= \int_{|y|>r} \left(\cosh (\scalp{\xi_0}{y}) - 1\right) \nu(dy)
\end{gather*}
is well-defined. By \cite[Theorem 25.17]{Sato},
\begin{gather*}
	\int_{\Rd} e^{\scalp{\xi_0}{y}} \left(e^{-t|\nulj_r|} \,\delta_0(dy) +  \plj_t^r(y) dy\right) = e^{t \bar{\omega}_r(\xi_0)},
\end{gather*}
and consequently,
\begin{gather*}
	p_t(x) \leq  \psj_t^r(0) e^{-\scalp{\xi_0}{x} +t\omega(\xi_0)}, \quad x \in \Rd,\; t>0,\; r >1.
\end{gather*}
A further application of \eqref{eq:decomp} yields $\psj_t^r(0)	\leq e^{t|\nulj_r|} p_t(0)$, $t>0$, $r >1$. Letting $r \to \infty$ in the estimate
\begin{gather*}
	p_t(x) \leq  e^{t|\nulj_r|} p_t(0) e^{-\scalp{\xi_0}{x} +t\omega(\xi)}, \quad x \in \Rd,\; t>0,
\end{gather*}
finishes the proof.
\end{proof}

\section{Subexponential decay}\label{sec:subexp}

In this section we prove Theorem \ref{th:subexp} in a series of lemmas. Our argument will be as follows:
\begin{gather*}
	\ref{th:subexp-A}
	\xLeftrightarrow{\text{L.\ref{l-41}}}\ref{th:subexp-B}
	\xLeftrightarrow[\text{L.\ref{l-43}}]{\text{L.\ref{l-42}}}\ref{th:subexp-C}
	\xLongrightarrow{\text{L.\ref{l-44}}}\ref{th:subexp-D}
	\xLongrightarrow{\text{L.\ref{l-44}}}\ref{th:subexp-E}
	\xLongrightarrow{\text{L.\ref{l-45}}}\ref{th:subexp-C}
\end{gather*}

\begin{lemma}\label{l-41}
	The assertions \ref{th:subexp}.\ref{th:subexp-A} and \ref{th:subexp}.\ref{th:subexp-B} are equivalent.
\end{lemma}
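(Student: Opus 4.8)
The goal is to prove the equivalence of \ref{th:subexp-A}, i.e.\ $\lim_{r\to\infty} \frac{\log f(r)}{r} = 0$, with \ref{th:subexp-B}, i.e.\ for every $\epsilon > 0$ there is $\tilde C(\epsilon) > 0$ with $f(r) \geq \tilde C e^{-\epsilon r}$ for $r \geq 1$. Both conditions are purely about the profile $f$, so no probabilistic or semigroup input is needed; this is an elementary real-analysis lemma, and the argument should be short.

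\textbf{Plan for \ref{th:subexp-A} $\Rightarrow$ \ref{th:subexp-B}.} Assume $\log f(r)/r \to 0$. Fix $\epsilon > 0$. Then there is $R = R(\epsilon) \geq 1$ such that for all $r \geq R$ we have $\log f(r)/r > -\epsilon$, i.e.\ $f(r) > e^{-\epsilon r}$. It remains to handle the compact range $r \in [1, R]$: since $f$ is decreasing and positive, $f(r) \geq f(R) > 0$ there, so $f(r) \geq f(R) = f(R) e^{\epsilon r} e^{-\epsilon r} \geq f(R) e^{\epsilon} e^{-\epsilon r}$ for $r \in [1,R]$ (using $r \geq 1$). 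Taking $\tilde C(\epsilon) := \min\{1, f(R)e^{\epsilon}\} > 0$ (or just $\tilde C := f(R) e^\epsilon \wedge 1$) gives $f(r) \geq \tilde C e^{-\epsilon r}$ for all $r \geq 1$.

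\textbf{Plan for \ref{th:subexp-B} $\Rightarrow$ \ref{th:subexp-A}.} Assume the family of lower bounds. First note that $f$ decreasing implies $\limsup_{r\to\infty} \log f(r)/r \leq 0$: indeed $f(r) \leq f(1)$ for $r \geq 1$, so $\log f(r)/r \leq \log f(1)/r \to 0$, whence $\limsup_{r\to\infty}\log f(r)/r \leq 0$. For the matching lower bound on the $\liminf$, fix $\epsilon > 0$; from \ref{th:subexp-B} we get $\log f(r) \geq \log \tilde C(\epsilon) - \epsilon r$, so $\log f(r)/r \geq \log\tilde C(\epsilon)/r - \epsilon$, and letting $r \to \infty$ gives $\liminf_{r\to\infty} \log f(r)/r \geq -\epsilon$. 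Since $\epsilon > 0$ was arbitrary, $\liminf_{r\to\infty}\log f(r)/r \geq 0$. Combining the two bounds yields $\lim_{r\to\infty} \log f(r)/r = 0$, which is \eqref{eq:sub_exp}.

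There is no real obstacle here; the only mild subtlety is making sure the constant $\tilde C(\epsilon)$ also controls the finite interval $[1,R]$, which is immediate from monotonicity and positivity of $f$. I would present both implications as short paragraphs, essentially as above, without further machinery.
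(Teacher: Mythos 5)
Your proof is correct and follows essentially the same elementary approach as the paper: the implication \ref{th:subexp-A} $\Rightarrow$ \ref{th:subexp-B} uses the definition of the limit plus monotonicity of $f$ to handle the compact range $[1,R]$, and \ref{th:subexp-B} $\Rightarrow$ \ref{th:subexp-A} combines the lower bound from \ref{th:subexp-B} with the trivial upper bound $f(r)\leq f(1)$; the paper packages the converse as a direct two-sided estimate for large $r$ whereas you split it into a $\limsup$ and a $\liminf$ bound, but this is a purely cosmetic difference.
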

\begin{proof} 
	First we will prove \ref{th:subexp-A}$\Rightarrow$\ref{th:subexp-B}. If \ref{th:subexp-A} holds, then for every $\epsilon>0$ there exists $r_0>0$ such that for every $r\geq r_0$
	\begin{gather*}
		\frac{\log f(r)}{r} \geq -\epsilon,
	\intertext{hence}
		f(r) \geq e^{-\epsilon r}, \quad r\geq r_0.
	\end{gather*}
	For $r\in (1,r_0]$ we obtain from the monotonicity of $f$ that $f(r) \geq f(r_0) \geq e^{-\epsilon r_0} \geq e^{-\epsilon r_0} e^{-\epsilon r}$,  hence \ref{th:subexp-B} follows with $\tilde{C}=e^{-\epsilon r_0}$. 

  	Conversely, we assume \ref{th:subexp-B}. Fix $\epsilon >0$. Using the monotonicity of $f$ and the inequality \ref{th:subexp-B} with $\epsilon/2$ we get 
	\begin{gather*}
		\frac{1}{r}\log f(1) \geq \frac{1}{r}\log f(r) \geq  \frac{\log \tilde{C}}{r} - \frac{\epsilon}{2},\quad r\geq 1.
	\end{gather*}
	For $r\geq \frac{1}{\epsilon}\max\left\{\log f(1), -2\log\tilde{C},\epsilon\right\}$ we have
	\begin{gather*}
		 \epsilon \geq \frac{\log f(r)}{r} \geq -\epsilon,
	\end{gather*}
	hence, $\lim_{r\to \infty} \frac{\log f(r)}{r}=0$.  
\end{proof}

\begin{lemma}\label{l-42}
	If \eqref{L1} and \eqref{L2} hold, then \ref{th:subexp}.\ref{th:subexp-B} implies \ref{th:subexp}.\ref{th:subexp-C}.
\end{lemma}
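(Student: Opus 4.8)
The plan is to use the decomposition \eqref{eq:decomp} together with the Knopova--Schilling-type bound \eqref{eq:small_est} for the small-jump part and Lemma \ref{convolutions} for the compound Poisson part, and then to exploit the subexponential bound \eqref{eq:doubling_kappaB} to absorb every exponential term into a factor $e^{\alpha_0 t}$ at the cost of a constant depending on $\alpha_0$. Fix $\alpha_0 > 0$. It suffices to prove \eqref{eq:doubling_kappaC} separately for $t \in (0,1]$ and for $t \geq 1$; the small-time range is already covered by \eqref{eq:small_time}, so the work is in the range $t \geq 1$. There, pick $r = r(\alpha_0) \geq 1$ large enough that $K(r) \vee |\nulj_r| < \alpha_0/3$, say; this is possible by \eqref{L2} since $K(r) \asymp K_f(r) \to 0$ and $|\nulj_r| = \nu(\{|y|>r\}) \to 0$ as $r \to \infty$. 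With this $r$, Lemma \ref{convolutions} gives $\plj_t^r(x) \leq C_9 t\, e^{-t|\nulj_r|} e^{(\alpha_0/3) t}\nu(x) \leq C\, e^{(\alpha_0/2)t}\nu(x)$ for $|x|\geq 1$, $t \geq 1$ (absorbing the polynomial factor $C_9 t$ and the exponential $e^{(\alpha_0/3)t}$ into $e^{(\alpha_0/2)t}$ with a constant depending on $\alpha_0$), and since $\nu(x) \leq C_0 f(|x|)$ this already matches the target for the pure jump part.

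The remaining task is to estimate $p_t(x)$ via $p_t = e^{-t|\nulj_r|}\psj_t^r + \psj_t^r * \plj_t^r$. For the convolution term I would split the integral $\psj_t^r * \plj_t^r(x) = \int \psj_t^r(x-z)\plj_t^r(z)\,dz$ according to whether $|x-z| \leq 1/2$ or $|x-z| > 1/2$; on the first region use $\psj_t^r(x-z) \leq \psj_t^r(0) \leq e^{t|\nulj_r|}p_t(0)$ together with the sup bound \eqref{eq:sup_pt}--\eqref{L1}, which via Lemma \ref{h_upper_est} grows only polynomially in $t$ (hence is $\leq C e^{(\alpha_0/2)t}$), while $\int_{|x-z|\leq 1/2}\plj_t^r(z)\,dz \leq C e^{(\alpha_0/2)t} f(|x|)$ by the already-established bound on $\plj_t^r$ and \eqref{eq:comp} (note $|z| \geq |x|-1/2 \geq |x|/2$); on the second region bound $\plj_t^r(z) \leq C e^{(\alpha_0/2)t}\nu(z)$ and use Lemma \ref{convolutions}-type reasoning, or more directly the defining property of $K(r)$, to get $\int_{|x-z|>1/2}\psj_t^r(x-z)\nu(z)\,dz$ controlled by $f(|x|)$ — here one also needs a small-jump analogue, which is where \eqref{eq:small_est} enters. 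For the leading term $e^{-t|\nulj_r|}\psj_t^r(x)$ apply \eqref{eq:small_est}: for each $x$ with $|x| \geq 1$ choose $\xi = \kappa_0 x/|x|$ with a small fixed $\kappa_0 > 0$, giving $\psj_t^r(x) \leq C_8 e^{-\kappa_0|x| + t\omega_r(\kappa_0 x/|x|)}$; now choose $\kappa_0 = \kappa_0(\alpha_0)$ small enough that $\sup_{|\xi| \leq \kappa_0}\omega_r(\xi) \leq \alpha_0/2$ (possible because $\omega_r(\xi) \to 0$ as $|\xi| \to 0$, the integrand being over the bounded set $\{|y| \leq r\}$), and then invoke \eqref{eq:doubling_kappaB} in the form $e^{-\kappa_0|x|} \leq (\tilde C_{\kappa_0})^{-1} f(|x|)$ — this is exactly where subexponentiality of $f$ is used — to conclude $e^{-t|\nulj_r|}\psj_t^r(x) \leq C e^{(\alpha_0/2)t} f(|x|)$.

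Collecting the three contributions and relabelling $\alpha_0/2 \rightsquigarrow \alpha_0$ (which is harmless, as $\alpha_0 > 0$ is arbitrary) yields \eqref{eq:doubling_kappaC}. The main obstacle I anticipate is the second region of the convolution term, i.e.\ bounding $\int_{|x-z| > 1/2} \psj_t^r(x-z)\,\plj_t^r(z)\,dz$: the small-jump density $\psj_t^r$ is not itself dominated by $f$, so one cannot directly apply the $K(r)$-mechanism as in Lemma \ref{convolutions}. The way around this is to further decompose $\psj_t^r$ using \eqref{eq:decomp} at a still smaller truncation level, or to observe that on $\{|x-z| > 1/2\}$ one can write $\psj_t^r * \plj_t^r \leq \psj_t^{1/2} * (\text{compound Poisson with intensity } \nu\I_{(1/2, r]} + \nulj_r)$ and reuse Lemma \ref{convolutions} with $r$ replaced by $1/2$, paying only a constant and an $e^{(\alpha_0/2)t}$; the bookkeeping is routine once one commits to iterating the decomposition \eqref{eq:decomp}.
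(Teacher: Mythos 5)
Your plan follows the paper's proof quite closely: the key step in both is to use the decomposition \eqref{eq:decomp}, apply the Knopova--Schilling bound \eqref{eq:small_est} to $\psj_t^r$ with $\xi=\epsilon\,x/|x|$ for a small fixed $\epsilon=\epsilon(\alpha_0,r)$ chosen so that $\omega_r(\xi)<\tfrac12\alpha_0$, invoke \ref{th:subexp}.\ref{th:subexp-B} to absorb $e^{-\epsilon|x|}$ into $f(|x|)$, choose $r$ via \eqref{L2} so that $K(r)$ is small, and then handle the convolution term with Lemma~\ref{convolutions} together with \eqref{eq:comp} and $K_f(1)<\infty$. The paper's region split for $\psj_t^r*\plj_t^r$ is three-way (small $|y|$, small $|y-x|$, and both large) while yours is two-way ($|x-z|\le\tfrac12$ versus $|x-z|>\tfrac12$); this is cosmetic.

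However, your final paragraph contains a genuine confusion. You claim the main obstacle is that ``the small-jump density $\psj_t^r$ is not itself dominated by $f$'' and propose iterating \eqref{eq:decomp} at a finer truncation level. But you have already established in the preceding paragraph, by exactly the paper's mechanism, that $\psj_t^r(y)\le C\,e^{(\alpha_0/2)t}f(|y|)$ for all $|y|\ge 1$ and $t\ge 1$; this bound applies verbatim to the factor $\psj_t^r(x-z)$ inside the integral whenever $|x-z|\ge 1$, which it is in the far region once $|x|$ is large and $|x-z|>\tfrac12$ is slightly enlarged (the paper uses $|x|>3$ and effectively $|x-z|\ge 1$). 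No second decomposition is needed --- that workaround is an unnecessary detour, and following it would substantially complicate the proof without gain. There is also a smaller gap: in the far region you write $\plj_t^r(z)\le C e^{(\alpha_0/2)t}\nu(z)$ and then aim for $K_f$, but Lemma~\ref{convolutions} gives this pointwise bound only for $|z|\ge 1$; the sub-case $|z|<1$ has to be split off and handled separately (as the paper's $\mathrm{I}_1$ does) using $\int\plj_t^r\le 1$, the bound $\psj_t^r(x-z)\lesssim e^{\alpha_0 t/2}f(|x-z|)$ for $|x-z|\ge 2$, and \eqref{eq:comp}, rather than via a pointwise control of $\plj_t^r$ near the origin.
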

\begin{proof} 
Fix $\alpha_0>0$. If $t \in (0,1]$, then we get from \eqref{eq:small_time}
\begin{gather*}
	p_t(x) \leq c_1 t f(|x|) \leq c_1 e^{\alpha_0 t} f(|x|), \quad |x| \geq 1.
\end{gather*}
For $t>1$ and $1 \leq |x|\leq 3$ the estimate in \ref{th:subexp-C} follows from the boundedness of $p_t$ and the monotonicity
of $f$. Indeed, 
\begin{align*}
	p_t(x) 
	&= (2\pi)^{-d} \int e^{-t\Psi(u)}e^{i\scalp{x}{u}}\, du 
	\leq (2\pi)^{-d} \int e^{-\Psi(u)}\, du\\
	&= c_2 \frac{f(|x|)}{f(|x|)} 
	\leq \frac{c_2}{f(|3|)} f(|x|) \leq c_3 e^{\alpha_0 t} f(|x|).
\end{align*}
All that remains is to consider the case $t> 1$ and $|x|>3$.  We will use the decomposition \eqref{eq:decomp}.

First of all, by using \eqref{L2}, we can find $r \geq 1$ such that $K(r) \leq \frac 14\alpha_0$ and, consequently,
\begin{align}\label{eq:choose_r}
	te^{K(r)t} \leq c_4 e^{\frac 12 \alpha_0 t} \leq c_4 e^{\alpha_0 t},
\end{align}
with $c_4=c_4(r)$. We fix this $r$ for the rest of this proof.

Taking $\xi=\epsilon\frac{x}{|x|}$ in the estimate \eqref{eq:small_est}, we get
\begin{equation}\label{psj_estimate}
	 \psj_t^r(x) \leq c_5 e^{-\epsilon|x|}e^{t\omega_r(\epsilon x/|x|)}, \quad x\in\Rd,\; t>1,
 \end{equation}
Since $\omega_r(\xi)\to 0$ as $|\xi|\to 0$,  we can choose $\epsilon>0$ such that  $\omega_r\big(\epsilon \frac x{|x|}\big) < \frac 12\alpha_0$ for every $x\in\Rd$. For this $\epsilon$ we get with \ref{th:subexp-B} that $e^{-\epsilon|x|} \leq c_6 f(|x|)$ for $|x|\geq 1$,  where $c_6$ depends on $\epsilon$.  Hence
\begin{equation}\label{psj_estimate_2}
	\psj_t^r(x) \leq c_7 e^{\frac 12 \alpha_0 t} f(|x|) \leq c_7 e^{\alpha_0 t} f(|x|), \quad |x|\geq 1,\; t > 1,
\end{equation}
for the above $\epsilon$ with a constant $c_7=c_7(r,\epsilon)$. Now we have to estimate $\psj_t^r\ast \plj_t^r$. Let
\begin{gather*}
	\psj_t^r\ast \plj_t^r(x)  
	= \left[\int_{|y|\leq 1} + \int_{|y-x|\leq 1} + \int_{|y-x|>1,|y|>1}\right] \psj_t^r(x-y) \plj_t^r(y)\, dy  
	=: \mathrm{I}_1 + \mathrm{I}_2 + \mathrm{I}_3
\end{gather*}
For $|x|>3$ and $|y|\leq 1$ we have $|y-x|>1$, and using \eqref{psj_estimate_2} and \eqref{eq:comp}, we get
\begin{align*}
	\mathrm{I}_1 
  	= \int_{|y|\leq 1} \psj_t^r(x-y) \plj_t^r(y)\, dy 
  	&\leq c_7 e^{\alpha_0 t} \int_{|y|\leq 1} f(|x-y|) \,\plj_t^r(y)\, dy \\
	&\leq c_8 e^{\alpha_0 t} f(|x|) \int \plj_t^r(y)\, dy 
	\leq c_8 e^{\alpha_0 t} f(|x|).
\end{align*}
Using the second estimate in Lemma \ref{convolutions}, \eqref{eq:comp} and \eqref{eq:choose_r}, we obtain
\begin{align*}
	\mathrm{I}_2  
	= \int_{|y-x|\leq 1} \psj_t^r(x-y) \plj_t^r(y)\, dy 
	&\leq c_9\, t  e^{-t|\nulj_r|}e^{(K(r)\vee |\nulj_r|)t} \int_{|y-x|\leq 1} \psj_t^r(x-y) f(|y|)\, dy \\
	&\leq c_{10}\, t  e^{-t|\nulj_r|}e^{(K(r)\vee |\nulj_r|)t} f(|x|) \int_{|y-x|\leq 1} \psj_t^r(x-y)\, dy \\
	&\leq c_{10}\, t  e^{K(r)t} f(|x|) \\
	&\leq c_{11} \, e^{\alpha_0 t} f(|x|).
\end{align*}
Furthermore, using \eqref{psj_estimate_2} with $\frac 12\alpha_0$, Lemma \ref{convolutions} and \eqref{eq:choose_r} (also with $\frac 12\alpha_0$), we get
\begin{align*}
	\mathrm{I}_3 
	&= \int_{|y-x|>1,|y|>1} \psj_t^r(x-y) \plj_t^r(y)\, dy \\
	&\leq c_{12}\, e^{\frac 12 \alpha_0 t} t e^{-t|\nulj_r|}e^{(K(r)\vee |\nulj_r|)t} \int_{|y-x|>1,\,|y|>1} f(|x-y|) f(|y|)\, dy \\
	&\leq c_{12}\, e^{\frac 12 \alpha_0 t} t e^{K(r)t} \int_{|y-x|>1,\,|y|>1} f(|x-y|) f(|y|)\, dy \\
	&\leq c_{13}\, e^{\alpha_0 t} \int_{|y-x|>1,\,|y|>1} f(|x-y|) f(|y|)\, dy.
\end{align*}
Using the fact that we have $K_f(1)<\infty$ under \eqref{L2}, we finally obtain 
\begin{gather*}
	\mathrm{I}_3 \leq c_{14}  e^{\alpha_0 t}f(|x|).
\end{gather*}
This completes the proof of \ref{th:subexp-C} for $t> 1$ and $|x|>3$, finishing the whole proof.
\end{proof}

\begin{lemma}\label{l-43}
	If \eqref{L1} holds, then \ref{th:subexp}.\ref{th:subexp-C} implies \ref{th:subexp}.\ref{th:subexp-B}.
\end{lemma}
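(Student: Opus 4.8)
\textbf{Plan for the proof of Lemma~\ref{l-43}.}
The task is to derive the pointwise lower bound \ref{th:subexp-B}, i.e.\ $f(r)\ge \tilde C e^{-\epsilon r}$ for all $r\ge 1$, from the heat-kernel bound \ref{th:subexp-C}, namely $p_t(x)\le C e^{\alpha_0 t}f(|x|)$ for all $|x|\ge 1$ and $t>0$, which is assumed to hold for every $\alpha_0>0$ with a constant depending on $\alpha_0$. The natural idea is to evaluate \ref{th:subexp-C} at a well-chosen pair $(t,x)$ and use a good \emph{lower} bound for $p_t(x)$ to turn it into an upper bound for... no — into a lower bound for $f(|x|)$. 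So the plan is: fix $\epsilon>0$, pick $\alpha_0$ (small, depending on $\epsilon$), and then for each large $r=|x|$ choose a time $t=t(r)$ so that $p_t(x)$ is bounded below by something like $c\,e^{-\epsilon r/2}$ while $e^{\alpha_0 t}$ contributes at most $e^{\epsilon r/2}$; dividing gives $f(r)\ge c\, e^{-\epsilon r}$.

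\textbf{Key steps.}
First I would record a Gaussian-type lower bound for $p_t(x)$ valid in the regime $|x|\asymp t^{1/\alpha}$, or more conveniently a lower bound of the form $p_t(x)\ge c\,\big(\Psi^*_-(1/t)\big)^d\exp(-c'|x|\Psi^*_-(1/t))$ (this is the standard on-diagonal-times-off-diagonal estimate for convolution semigroups under \eqref{L1}; it follows from known results such as those in \cite{KaletaSztonyk2015} together with Lemma~\ref{h_upper_est}, or one can use Lemma~\ref{pxy} to transfer a central lower bound). Using Lemma~\ref{h_upper_est}, $1/\Psi^*_-(1/t)\le C_5 t^{1/\alpha}$ for $t$ large, so $\Psi^*_-(1/t)\ge (C_5 t)^{-1/\alpha}$; hence for $t\ge 1$, say, $p_t(x)\ge c\, t^{-d/\alpha}\exp\big(-c'|x|(C_5 t)^{-1/\alpha}\big)$. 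Second, given $\epsilon>0$, I would choose $t=t(r)$ as a suitable power of $r$, e.g.\ $t=r^{\alpha}$ (up to a constant), so that $|x|\,\Psi^*_-(1/t)\le |x|(C_5 t)^{-1/\alpha}$ is of order $|x|/|x| = $ constant; this makes the exponential factor in the lower bound bounded below by a constant, and $t^{-d/\alpha}=r^{-d}$ is only polynomially small. Thus $p_{t(r)}(x)\ge c\, r^{-d}$ for $|x|=r\ge 1$. Third, I plug this and $t=t(r)=r^\alpha$ into \ref{th:subexp-C}: $c\,r^{-d}\le p_{t(r)}(x)\le C e^{\alpha_0 r^\alpha} f(r)$, giving $f(r)\ge (c/C)\, r^{-d} e^{-\alpha_0 r^\alpha}$. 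If $\alpha<1$ this is already $\ge \tilde C e^{-\epsilon r}$ for $r$ large (any fixed $\epsilon>0$), since $r^{-d}e^{-\alpha_0 r^\alpha}$ decays subexponentially; for $r\in[1,R_0]$ we cover the bounded range using the monotonicity of $f$ as in Lemma~\ref{l-41}. If $\alpha=1$ one instead chooses $\alpha_0<\epsilon/2$ and $t(r)=r$, so that $e^{\alpha_0 t(r)}=e^{\alpha_0 r}\le e^{\epsilon r/2}$, and one also needs the exponential part of the lower bound to cost at most $e^{\epsilon r/2}$; this is arranged by taking $t(r)=Ar$ with $A=A(\epsilon)$ large enough that $c'|x|(C_5 t(r))^{-1/\alpha}=c'(C_5 A)^{-1}\le \epsilon/2$, giving $p_{t(r)}(x)\ge c\, r^{-d}e^{-\epsilon r/2}$ and hence $f(r)\ge (c/C) r^{-d} e^{-\epsilon r/2} e^{-\alpha_0 A r}\ge \tilde C e^{-\epsilon r}$ after shrinking $\alpha_0$. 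Finally, absorb the polynomial $r^{-d}$ into the exponential at the cost of enlarging $\epsilon$ slightly (which is harmless since $\epsilon>0$ was arbitrary) and handle $r\in[1,R_0]$ by monotonicity.

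\textbf{Main obstacle.}
The delicate point is securing the off-diagonal \emph{lower} bound for $p_t(x)$ in the right range and with the right dependence on $t$ and $|x|$ — specifically a bound of the form $p_t(x)\ge c\,\big(\Psi^*_-(1/t)\big)^d \exp(-c'|x|\Psi^*_-(1/t))$ for $t\ge 1$, uniformly in $|x|$, using only \eqref{L1} (and possibly \eqref{L2}). The small-time two-sided estimate \eqref{eq:small_time} is not directly usable here because we genuinely need large $t\asymp r^\alpha\to\infty$. One route is to iterate the Chapman--Kolmogorov equation (as in the $t>1$ part of the proof of Lemma~\ref{pxy}): write $p_{nt_0}=p_{t_0}^{*n}$, use a uniform lower bound $p_{t_0}(z)\ge c_0>0$ on a ball $B_\rho(0)$ with $\rho$ comparable to the natural scale $1/\Psi^*_-(1/t_0)$, and then chain roughly $|x|\Psi^*_-(1/t)$ such balls to reach $x$, picking up a factor $c_0^{\,|x|\Psi^*_-(1/t)}=\exp(-c'|x|\Psi^*_-(1/t))$; the on-diagonal factor $(\Psi^*_-(1/t))^d$ comes from the volume of the last ball. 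Making the chaining argument quantitative and uniform in $|x|$ is the technical heart; once it is in hand, the choice $t(r)\asymp r^\alpha$ and the splitting into the cases $\alpha<1$ and $\alpha=1$ are routine, and the passage from large $r$ to all $r\ge 1$ is exactly the monotonicity trick already used in Lemma~\ref{l-41}.
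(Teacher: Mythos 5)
Your overall strategy — plug a lower bound for $p_t(x)$ into \ref{th:subexp-C} at a well-chosen time $t=t(r)$ and solve for $f(r)$ — is exactly the paper's strategy. But the lower bound you propose for the heat kernel is wrong, and this causes the rest of the argument to go off the rails.

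\textbf{The claimed lower bound is false.} You assert as \enquote{standard} that
$p_t(x)\ge c\,\bigl(\Psi^*_-(1/t)\bigr)^d\exp\bigl(-c'\,|x|\,\Psi^*_-(1/t)\bigr)$.
Already for Brownian motion ($\Psi(\xi)=|\xi|^2$, $\Psi^*_-(1/t)=t^{-1/2}$) this would require $e^{-|x|^2/(4t)}\ge c\,e^{-c'|x|/\sqrt t}$, which fails as soon as $|x|/\sqrt t$ is large. The actual estimate available under \eqref{L1} — and the one the paper uses, from \cite[Theorem~1.1]{Sztonyk2017} — is
$p_t(x)\ge c_1\,\Psi^*_-(1/t)^d\,e^{-c_2|x|^2/t}$ for $t>c_3$, $|x|\le c_4 t$,
with a Gaussian off-diagonal factor and, crucially, the restriction $|x|\lesssim t$. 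Your own chaining heuristic, if carried out correctly, produces precisely this: with step time $t_0$ and step size $\rho\asymp 1/\Psi^*_-(1/t_0)$ you need $n=t/t_0$ steps and $n\rho\ge|x|$, and eliminating $t_0$ gives $n\asymp |x|^2/t$ in the Gaussian case, not $|x|\Psi^*_-(1/t)$. There is also a sign error in your use of Lemma~\ref{h_upper_est}: it gives a \emph{lower} bound on $\Psi^*_-(1/t)$, which is helpful for the polynomial prefactor but pushes the exponential factor the \emph{wrong} way, so the displayed inequality $p_t(x)\ge c\,t^{-d/\alpha}\exp\!\bigl(-c'|x|(C_5 t)^{-1/\alpha}\bigr)$ does not follow even from your (incorrect) starting bound.

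\textbf{Consequences for the choice of $t(r)$.} With the correct Gaussian bound your case split by $\alpha$ breaks down. Taking $t=r^\alpha$ gives $|x|^2/t=r^{2-\alpha}$, which for $\alpha<1$ is \emph{super}-exponential — the opposite of what you need — so the claim that \enquote{if $\alpha<1$ this is already $\ge\tilde C e^{-\epsilon r}$} is wrong. The robust choice, and the one the paper makes, is linear in $r$ with a large constant that blows up as $\epsilon\downarrow 0$: $t=\epsilon^{-1}4c_2|x|$, so that $c_2|x|^2/t=\tfrac{\epsilon}{4}|x|$; one then picks $\alpha_0\asymp\epsilon^2$ (specifically $\alpha_0=\epsilon^2/(16c_2)$) so that $\alpha_0 t\asymp\epsilon|x|$ as well. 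This works uniformly for every $\alpha\in(0,2]$, needs no case split, and the constraint $|x|\le c_4 t$ becomes a smallness constraint on $\epsilon$ which is harmless (large $\epsilon$ follows from small $\epsilon$, and $r\in[1,\rho]$ is handled by monotonicity, as you correctly note). Your $\alpha=1$ sub-argument ($t=Ar$, $A=A(\epsilon)$) is in the right spirit, but you have to run it for all $\alpha$, and you must get the exponent $|x|^2/t$ and the quadratic relation $\alpha_0\asymp\epsilon^2$ right — those are the parts your plan misses.
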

\begin{proof} 
We can use \cite[Theorem 1.1]{Sztonyk2017} to see that under \eqref{L1} there are constants $c_1, c_2, c_3, c_4$, depending only on $d$ and $\nu$, such that
\begin{gather*}
	p_t(x) 
	\geq c_1 \Psi^*_{-}\left(\tfrac{1}{t}\right)^{d} e^{{-c_2|x|^2}/{t}}, 
	\quad t>c_3,\; |x|\leq c_4 t.
\end{gather*}
Therefore, \ref{th:subexp-C} shows that for every $\alpha_0>0$ there exists $C=C(\alpha_0)$ such that
\begin{gather*}
	c_1 \Psi^*_{-}\left(\tfrac{1}{t}\right)^{d} e^{{-c_2|x|^2}/{t}} \leq C e^{\alpha_0 t} f(|x|),\quad t>c_3,\; 1\leq |x| \leq c_4 t.
\end{gather*}
Fix $\epsilon>0$ and set $t=\epsilon^{-1} 4c_2|x|$ in the above inequality. We get
\begin{gather*}
	f(|x|) 
	\geq \frac{c_1}{C}  \Psi^*_{-}\left(\tfrac{\epsilon}{4c_2|x|}\right)^{d}
	e^{-|x|\left(\frac 14\epsilon +4\epsilon^{-1}c_2\alpha_0\right)},
	\quad 4\epsilon^{-1}c_2|x|>c_3,\; 1\leq |x| \leq 4\epsilon^{-1}c_2 c_4 |x|.
\end{gather*}
We may take $\alpha_0=\epsilon^2/(16c_2)$, to get
\begin{gather*}
	f(|x|) 
	\geq \frac{c_1}{C} \Psi^*_{-}\left(\tfrac{\epsilon}{4c_2|x|}\right)^{d} e^{-\frac 12\epsilon|x|},
	\quad 1\leq |x|,  \epsilon \leq \min\left\{4c_2 c_4,4c_2c_3^{-1}\right\} .
\end{gather*}
It follows from Lemma \ref{h_upper_est}	that $\Psi^*_{-}\left(\tfrac{\epsilon}{4c_2|x|}\right)^{d} \leq C_5 (4\epsilon^{-1}c_2|x|)^{1/\alpha}$, provided that $|x|\geq 1$ and $\epsilon\leq 4c_2 C_5^{\alpha}$. Thus, for every $\epsilon < 4c_2\min\left\{ c_4,c_3^{-1}, C_5^{\alpha}\right\}$, there exists a constant $c_5=c_5(\epsilon)>0$ such that $\Psi^*_{-}\left(\tfrac{\epsilon}{4c_2|x|}\right)^{d}\geq c_5 e^{-\frac 12\epsilon|x|}$ for $|x|\geq 1$, and we arrive at \ref{th:subexp-B} for such $\epsilon$, hence for all $\epsilon>0$. 
\end{proof}

\begin{lemma}\label{l-44}
	The condition \ref{th:subexp}.\ref{th:subexp-C} implies \ref{th:subexp}.\ref{th:subexp-D},  
	and \ref{th:subexp}.\ref{th:subexp-D} implies \ref{th:subexp}.\ref{th:subexp-E}.
\end{lemma}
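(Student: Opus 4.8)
The plan is to exploit the integral representation $g_\alpha(x) = \int_0^\infty e^{-\alpha t} p_t(x)\, dt$ together with the Bernstein-type characterisation of completely monotone functions. Fix $\alpha_0 > 0$ and $|x| \geq 1$. From \ref{th:subexp-C} we have $p_t(x) \leq C e^{\alpha_0 t} f(|x|)$ for all $t > 0$. First I would write, for $\alpha > \alpha_0$,
\begin{gather*}
	g_\alpha(x) = \int_0^\infty e^{-\alpha t} p_t(x)\, dt,
\end{gather*}
and observe that the integrand is finite and the integral converges because of the exponential bound. The key point is that $\alpha \mapsto \int_0^\infty e^{-\alpha t} p_t(x)\, dt$ is itself completely monotone on $(\alpha_0,\infty)$: indeed $p_t(x)\, dt$ is a (positive) measure on $(0,\infty)$, so $g_\alpha(x)$ is the Laplace transform of a positive measure, hence completely monotone wherever it is finite, i.e.\ on $(\alpha_0, \infty)$ (in fact on the larger half-line where it converges, but $(\alpha_0,\infty)$ suffices). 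On the other hand, $\alpha \mapsto \frac{C}{\alpha - \alpha_0} = \int_0^\infty e^{-(\alpha-\alpha_0)t}\, C\, dt = \int_0^\infty e^{-\alpha t}\, C e^{\alpha_0 t}\, dt$ is the Laplace transform of the positive measure $C e^{\alpha_0 t}\, dt$ on $(0,\infty)$. Therefore the difference in \eqref{eq:doubling_kappaD}, namely
\begin{gather*}
	\alpha \mapsto \frac{C f(|x|)}{\alpha - \alpha_0} - g_\alpha(x) = \int_0^\infty e^{-\alpha t} \left( C e^{\alpha_0 t} f(|x|) - p_t(x)\right) dt,
\end{gather*}
is the Laplace transform of the measure $\left( C e^{\alpha_0 t} f(|x|) - p_t(x)\right) dt$, which is \emph{non-negative} precisely by \ref{th:subexp-C}. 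A non-negative measure has a completely monotone Laplace transform, so \ref{th:subexp-D} follows. (One should note the mild subtlety that complete monotonicity here is asserted on the open half-line $(\alpha_0,\infty)$, matching exactly the domain where the representing measure $C e^{\alpha_0 t}\, dt$ yields a convergent integral; this causes no difficulty since all the integrals above converge absolutely for $\alpha > \alpha_0$.)

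For the second implication, \ref{th:subexp-D} $\Rightarrow$ \ref{th:subexp-E}, I would simply use the defining property of complete monotonicity: a completely monotone function is in particular non-negative. Thus \ref{th:subexp-D} gives $\frac{C}{\alpha - \alpha_0} f(|x|) - g_\alpha(x) \geq 0$ for all $\alpha > \alpha_0$ and all $|x| \geq 1$, which is exactly the estimate \eqref{eq:doubling_kappaE} with the same constant $C = C(\alpha_0)$.

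The only genuine technical point — rather than an obstacle — is making sure the Laplace-transform manipulations are justified: one needs $p_t(x)$ to be measurable in $t$ (which holds by joint continuity of $(t,x)\mapsto p_t(x)$ under \eqref{L1}) and one needs the integral $\int_0^\infty e^{-\alpha t} p_t(x)\, dt$ to converge for $\alpha > \alpha_0$, which is immediate from \ref{th:subexp-C}; and one should invoke the standard fact that a function on $(\alpha_0,\infty)$ which is the Laplace transform (with the shifted parameter $\alpha - \alpha_0$, or equivalently by translating the representing measure) of a non-negative measure is completely monotone there — this is the elementary half of Bernstein's theorem and requires only differentiation under the integral sign, justified again by the exponential bound. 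No deeper input is needed; the lemma is essentially a bookkeeping statement about which half of Bernstein's theorem is being used.
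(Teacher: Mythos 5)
Your proof is correct and follows essentially the same route as the paper: express $\frac{C}{\alpha-\alpha_0}f(|x|)-g_\alpha(x)$ as the Laplace transform of the non-negative measure $\left(Ce^{\alpha_0 t}f(|x|)-p_t(x)\right)dt$, invoke the easy direction of Bernstein's theorem for \ref{th:subexp-C}$\Rightarrow$\ref{th:subexp-D}, and then use that completely monotone functions are non-negative for \ref{th:subexp-D}$\Rightarrow$\ref{th:subexp-E}. The extra remarks you include on measurability and absolute convergence are fine but not a substantive departure from the paper's argument.
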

\begin{proof} 
We begin with the implication \ref{th:subexp-C}$\Rightarrow$\ref{th:subexp-D}. Assume that \ref{th:subexp-C} holds for some $\alpha_0>0$ with the constant $C=C(\alpha_0)$. For $\alpha>\alpha_0$ we have
\begin{gather*}
	\frac{C}{\alpha - \alpha_0} f(|x|) - g_\alpha(x)
	= \int_0^\infty e^{-\alpha t} \left(C e^{\alpha_0 t}f(|x|)-p_{t}(x)\right) dt.
\end{gather*}
This shows that the function 
\begin{gather*}
	(\alpha_0,\infty) \ni \alpha \to \frac{C}{\alpha - \alpha_0} f(|x|) - g_\alpha(x)
\end{gather*}
is the Laplace transform of the non-negative measure $\mu(dt) = \left(C e^{\alpha_0 t}f(|x|)-p_{t}(x)\right) dt$, hence it is completely monotone by Bernstein's theorem. 
	
Since every completely monotone function is non-negative, \ref{th:subexp-D} implies \ref{th:subexp-E}.
\end{proof}

\begin{lemma}\label{l-45} 
	If \eqref{L1} and \eqref{L2} hold, then \ref{th:subexp}.\ref{th:subexp-E} implies \ref{th:subexp}.\ref{th:subexp-C}. 
\end{lemma}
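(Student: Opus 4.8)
The plan is to show that the integrability condition $g_\alpha(x) \le \frac{C}{\alpha-\alpha_0} f(|x|)$ for all $\alpha > \alpha_0$ forces the pointwise heat-kernel bound $p_t(x) \le C' e^{\alpha_0 t} f(|x|)$. The idea is to invert the Laplace transform: since $g_\alpha(x) = \int_0^\infty e^{-\alpha t} p_t(x)\,dt$, the bound \eqref{eq:doubling_kappaE} controls all Laplace transforms, and we want to extract a pointwise bound on $p_t$. The key structural fact is that $t \mapsto p_t(x)$ is not arbitrary; using the Chapman--Kolmogorov equations (semigroup property), $p_{t+s}(x) = \int p_t(x-z) p_s(z)\,dz$, one can relate values of $p_t(x)$ at different times. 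A clean way to exploit this: fix $x$ with $|x| \ge 1$ and write $p_{t}(x)$ in terms of an integral of $p_{t-s}(x-z)$ against $p_s(z)$, localizing $z$ to a small ball where Lemma~\ref{pxy} gives $p_{t-s}(x-z) \asymp p_{t-s}(x)$.

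First I would use the lower resolvent bound \eqref{eq:res_low} together with \eqref{eq:doubling_kappaE} to see that the two-sided comparison $g_\alpha(x) \asymp f(|x|)$ holds (with $\alpha$-dependent constants) for $\alpha > \alpha_0$; more importantly, I would seek a bound of the form $\int_0^\infty e^{-\alpha t} p_t(x)\,dt \le \frac{C}{\alpha - \alpha_0} f(|x|)$ and combine it with monotonicity-in-$t$-type estimates. The crucial observation is that $t \mapsto e^{-\alpha_0 t} p_t(x)$, for fixed $x$ with $|x|\ge 1$, is \emph{almost decreasing} in a suitable averaged sense: indeed, for $t > 1$, Lemma~\ref{pxy} and Chapman--Kolmogorov give $p_{t+1}(x) = \int p_1(z) p_t(x-z)\,dz \ge C_7 p_t(x) \int_{|z|\le 1/2} p_1(z)\,dz = c\, p_t(x)$, so $p_t(x)$ cannot drop too fast; combined with an upper submultiplicativity this pins down $p_t(x)$ up to the exponential factor. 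So I would aim to prove: there is $c > 0$ such that $p_{t}(x) \le \frac{1}{c}\int_{t}^{t+1} p_s(x)\,ds$ (or a similar windowed average), and hence
\begin{gather*}
	e^{-\alpha_0 t} p_t(x)
	\le \frac{e^{\alpha}}{c} \int_t^{t+1} e^{-\alpha s} e^{\alpha_0(s-t)} p_s(x)\, ds
	\le \frac{e^{\alpha+\alpha_0}}{c} e^{(\alpha-\alpha_0)t}\int_t^{t+1} e^{-\alpha s} p_s(x)\,ds.
\end{gather*}
Choosing $\alpha = \alpha_0 + 1/t$ (or $\alpha = 2\alpha_0$, say, and handling large $t$ separately) and plugging in \eqref{eq:doubling_kappaE} then yields $p_t(x) \le C' e^{\alpha_0 t} f(|x|)$, at least for $t$ bounded away from $0$; the range $t \in (0,1]$ is covered directly by the small-time estimate \eqref{eq:small_time}, and the range $1 \le |x| \le$ const by boundedness of $p_t$ as in Lemma~\ref{l-42}.

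The main obstacle is making the ``windowed average dominates the pointwise value'' step rigorous and uniform in $x$ (for $|x|\ge 1$) and $t$ (for $t \ge 1$). The natural tool is again Chapman--Kolmogorov together with Lemma~\ref{pxy}: for $s \in [t, t+1]$ we have $p_s(x) = \int p_{s-t}(z) p_t(x - z)\,dz \ge \int_{|z|\le 1/2} p_{s-t}(z) p_t(x-z)\,dz \ge C_7 p_t(x)\int_{|z|\le 1/2}p_{s-t}(z)\,dz$ — but this lower bound on $\int_{|z|\le 1/2} p_{s-t}(z)\,dz$ is only useful if $s-t$ is bounded below, whereas here $s - t$ ranges over $[0,1]$ and the mass $\int_{|z|\le 1/2} p_u(z)\,dz \to 1$ only as $u \to 0$. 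This actually helps: for $u = s-t$ small the Markov mass near the origin is close to $1$, and for $u$ near $1$ it is bounded below by a positive constant by continuity (using \eqref{L1}, the densities $p_u$ are continuous and, by \cite[Theorem 2]{KaletaSztonyk2015} as invoked in Lemma~\ref{pxy}, bounded below near $0$). So $\int_{|z|\le 1/2} p_u(z)\,dz \ge c_0 > 0$ uniformly for $u \in [0,1]$, giving $p_s(x) \ge C_7 c_0\, p_t(x)$ for all $s \in [t,t+1]$, hence $\int_t^{t+1} e^{-\alpha s}p_s(x)\,ds \ge C_7 c_0 e^{-\alpha(t+1)} p_t(x)$. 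Then from \eqref{eq:doubling_kappaE},
\begin{gather*}
	C_7 c_0 e^{-\alpha(t+1)} p_t(x) \le \int_0^\infty e^{-\alpha s} p_s(x)\,ds = g_\alpha(x) \le \frac{C}{\alpha-\alpha_0} f(|x|),
\end{gather*}
so $p_t(x) \le \frac{C e^{\alpha} e^{\alpha t}}{C_7 c_0 (\alpha - \alpha_0)} f(|x|)$, and taking $\alpha = \alpha_0 + 1$ gives $p_t(x) \le C' e^{(\alpha_0+1)t} f(|x|)$ for $|x|\ge 1$, $t \ge 1$. This proves \ref{th:subexp-C} with $\alpha_0$ replaced by $\alpha_0 + 1$; since $\alpha_0 > 0$ was arbitrary, \ref{th:subexp-C} holds for every $\alpha_0 > 0$, completing the cycle of implications. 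The only genuinely delicate point is the uniform-in-$u\in[0,1]$ lower bound on the near-origin mass of $p_u$, which I would isolate as a short preliminary claim relying on \eqref{L1} and the cited lower bound of Kaleta--Sztonyk.
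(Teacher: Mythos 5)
Your overall strategy — bounding the window integral $\int_t^{t+1} e^{-\alpha s}p_s(x)\,ds$ by $g_\alpha(x)$, using Chapman--Kolmogorov to compare $p_s$ and $p_t$, and invoking Lemma~\ref{pxy}, with the small-time and near-origin regimes handled separately — is essentially the route the paper takes. The only structural difference is cosmetic: you apply Lemma~\ref{pxy} inside the integrand and therefore need a uniform lower bound on $\int_{|z|\le 1/2} p_u(z)\,dz$ over $u\in[0,1]$, whereas the paper restricts the spatial integral to $|y|\le 1/2$, pulls out $\inf_{|y|\le 1/2}p_t(x-y)$, and keeps the whole double integral $\int_{|y|\le 1/2}\int_0^1 p_s(y)\,ds\,dy$ as a single positive constant, which sidesteps any uniformity issue. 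Your version can be made rigorous along the lines you sketch, but the paper's variant is lighter.

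There is, however, a genuine gap in your endgame. From
\begin{gather*}
	C_7\, c_0\, e^{-\alpha(t+1)}\,p_t(x) \;\le\; g_\alpha(x) \;\le\; \frac{C}{\alpha-\alpha_0}\, f(|x|)
\end{gather*}
you set $\alpha=\alpha_0+1$, obtain $p_t(x)\le C' e^{(\alpha_0+1)t} f(|x|)$, and assert that, since $\alpha_0>0$ was arbitrary, \ref{th:subexp-C} follows for every $\alpha_0>0$. But $\{\alpha_0+1:\alpha_0>0\}=(1,\infty)$, so this only establishes \ref{th:subexp-C} for exponential rates strictly greater than $1$; the range $(0,1]$ is never reached, and you cannot simply send $\alpha\downarrow\alpha_0$ because the factor $\frac{1}{\alpha-\alpha_0}$ blows up. The fix is precisely the small twist the paper uses: apply hypothesis \ref{th:subexp-E} with the threshold $\tfrac12\alpha_0$ rather than $\alpha_0$, giving $g_\alpha(x)\le \frac{\widetilde C}{\alpha-\alpha_0/2}f(|x|)$, and then take $\alpha=\alpha_0$. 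The denominator is then $\alpha_0/2$ (bounded below), the exponential factor in the resulting bound is $e^{\alpha_0 t}$, and since the prescribed $\alpha_0>0$ was arbitrary you do reach \ref{th:subexp-C} in full. As written, your proof does not close the cycle; with this adjustment it does.
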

\begin{proof} 
Fix $\alpha_0>0$. From \ref{th:subexp-E} we know 
\begin{gather*}
	\int_0^\infty e^{-\alpha s}p_s(x)\, ds 
	\leq \frac{\widetilde C}{\alpha- \frac 12 \alpha_0} f(|x|)
	\leq \frac{2\widetilde C}{\alpha_0} f(|x|) 
	\quad\text{for $\alpha \geq \alpha_0$ and $|x| \geq 1$},
\end{gather*}
for a suitable constant $\widetilde C = \widetilde C(\alpha_0/2)$. For every $t>0$ we also have
\begin{align*}
	\int_0^\infty e^{-\alpha_0 s} p_s(x)\, ds
	& \geq \int_{t}^{t+1} e^{-\alpha_0 s}p_s(x)\, ds \\
	& \geq e^{-\alpha_0 (t+1)} \int_t^{t+1} p_s(x)\, ds \\
	& = e^{-\alpha_0 (t+1)} \int_0^{1} p_{t+s}(x)\, ds \\
	& = e^{-\alpha_0 (t+1)} \int_{\Rd} p_t(x-y) \int_0^{1} p_s(y)\, ds \, dy \\
	& \geq e^{-\alpha_0 (t+1)} \inf_{|y| \leq 1/2} p_t(x-y) \int_{|y| \leq 1/2} \int_0^{1} p_s(y)\, ds \, dy,
\end{align*}
which yields
\begin{gather*}
	\inf_{|y|\leq 1/2} p_t(x-y) \leq C e^{\alpha_0 t} f(|x|), \quad t>0,\; |x| \geq 1,
\end{gather*}
where $C=\frac{2\widetilde C}{\alpha_0}e^{\alpha_0} \left(\int_{|y|\leq 1/2} \int_0^{1} p_s(y)\, ds \,dy\right)^{-1}$. Using Lemma \ref{pxy} we obtain \ref{th:subexp-C}.
\end{proof}

\section{Exponential decay} \label{sec:exp}
We will now provide the proofs of Theorems~\ref{th:exp} and~\ref{th:exp_sharp}. They will follow from a series of lemmas. Throughout this section we assume that the profile of the L\'evy density takes the form
\begin{gather}\label{eq:exp_prof_aux}
	f(r)= \exp(-\kappa r) h(r), \quad r > 0,
\end{gather}
where $\kappa>0$ and $h:(0,\infty) \to (0,\infty)$ is a decreasing function such that
\begin{gather}\label{eq:g_sub_exp_aux}
	r \mapsto \frac{\log h(r)}{r} 
	\text{\ \ is eventually increasing and\ \ } 
	\lim_{r\to \infty} \frac{\log h(r)}{r} = 0.
\end{gather}

Recall that for any fixed $\theta \in \sphere^{d-1}$ we have (see also Fig.~\ref{pic-graph-gamma})
\begin{gather*}
	\gamma_{\alpha}(\theta) 
	:= 
	\begin{cases}
		\kappa, & \alpha > \omega(\kappa \theta),\\
		\omega(\tinybullet\theta)^{-1}(\alpha), & 0 < \alpha \leq \omega(\kappa \theta).
	\end{cases}
\end{gather*} 

\begin{lemma} \label{lem:exp_1} 
	Assume \eqref{L1} and \eqref{L2}. Then there exists some $C>0$ such that for every $\alpha_0>0$ we have
	\begin{gather*}
		p_t(u\theta) 
		\leq C \exp\left(\alpha_0 t-\gamma_{\alpha_0}(\theta)\, u\right), 
		\quad u \geq 1,\; \theta \in \sphere^{d-1},\; t>0,
	\intertext{and}
  		g_{\alpha}(u\theta) 
  		\leq \frac{C}{\alpha-\alpha_0} \exp\left(-\gamma_{\alpha_0}(\theta)\, u\right), 
  		\quad u \geq 1,\; \theta \in \sphere^{d-1},\; \alpha > \alpha_0.
\end{gather*}
\end{lemma}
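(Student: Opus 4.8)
The plan is to exploit the decomposition \eqref{eq:decomp} together with the Knopova--Schilling-type bound \eqref{eq:KnopovaSchilling}, \eqref{eq:small_est} and Lemma~\ref{convolutions}, following the same pattern as the proof of Lemma~\ref{l-42}, but now keeping track of the directional exponential decay. Fix $\alpha_0>0$ and $\theta\in\sphere^{d-1}$. The key observation is that, by Lemma~\ref{lem:exp_mom}, the function $\xi\mapsto\omega(\xi)$ (and hence each $\omega_r(\xi)\leq\omega(\xi)$) is finite on the closed strip $|\xi|\leq\kappa$, and $s\mapsto\omega(s\theta)$ is continuous, strictly convex on $[-\kappa,\kappa]$ with minimum $0$ at $s=0$. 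Hence for each $\alpha_0>0$ we can pick $\gamma:=\gamma_{\alpha_0}(\theta)\in(0,\kappa]$ with $\omega(\gamma\theta)\leq\alpha_0$ (with equality when $\alpha_0\leq\omega(\kappa\theta)$, and $\gamma=\kappa$, $\omega(\kappa\theta)<\alpha_0$ otherwise). Plugging $\xi=\gamma\theta$ into \eqref{eq:small_est} gives, for $t,r\geq 1$,
\begin{gather*}
	\psj_t^r(x)\leq C_8\,e^{-\scalp{\gamma\theta}{x}+t\omega_r(\gamma\theta)}\leq C_8\,e^{-\scalp{\gamma\theta}{x}+\alpha_0 t},\quad x\in\Rd .
\end{gather*}
Evaluating at $x=u\theta$ yields the desired $\psj_t^r(u\theta)\leq C_8\,e^{\alpha_0 t-\gamma u}$.

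Next I would handle the small-time and near-origin ranges separately, exactly as in Lemma~\ref{l-42}: for $t\in(0,1]$ use \eqref{eq:small_time} and the crude bound $f(|x|)=e^{-\kappa|x|}h(|x|)\leq c\,e^{-\gamma|x|}$ valid because $\gamma\leq\kappa$ and $h$ is sub-exponential (so $h(r)\geq e^{-\epsilon r}$ eventually, giving $f(r)\geq c\,e^{-(\kappa+\epsilon)r}$ is the wrong direction — rather one uses $f(r)\leq h(1)e^{-\kappa r}\leq c\,e^{-\gamma r}$ directly since $h$ is decreasing); for $1\leq|x|\leq 3$, $t>1$, use boundedness of $p_t$ via \eqref{eq:sup_pt}. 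The main work is $t>1$, $u=|x|>3$, $x=u\theta$. Choose $r\geq 1$ with $K(r)\vee|\nulj_r|$ so small that $t\,e^{(K(r)\vee|\nulj_r|)t}e^{-t|\nulj_r|}\leq c\,e^{\frac12\alpha_0 t}$ — wait, one must be careful: Lemma~\ref{convolutions} gives $\plj_t^r(y)\leq C_9 t\,e^{-t|\nulj_r|}e^{(K(r)\vee|\nulj_r|)t}\nu(y)$ for $|y|\geq 1$, and $\int\plj_t^r(y)\,dy\leq e^{t|\nulj_r|}$. Splitting $\psj_t^r*\plj_t^r(u\theta)=\mathrm I_1+\mathrm I_2+\mathrm I_3$ over $\{|y|\leq1\}$, $\{|y-x|\leq1\}$, $\{|y-x|>1,|y|>1\}$ as in Lemma~\ref{l-42}, I would bound each term using the directional estimate on $\psj_t^r$ with a slightly larger exponent parameter.

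The decisive step — and the main obstacle — is term $\mathrm I_3$: one needs
\begin{gather*}
	\int_{|y-x|>1,\,|y|>1}e^{-\scalp{\gamma\theta}{x-y}}\,\nu(y)\,dy\leq c\,e^{-\gamma u},\quad x=u\theta,\ u>3,
\end{gather*}
(and similarly $\mathrm I_1,\mathrm I_2$ with the $\nu$-mass integrated), i.e. the convolution $e^{-\scalp{\gamma\theta}{\,\cdot\,}}\!\!*\nu$ evaluated along the ray must still decay like $e^{-\gamma u}$. This is exactly finiteness of $\int e^{\scalp{\gamma\theta}{y}}\nu(y)\,dy$ over $|y|\geq1$, which Lemma~\ref{lem:exp_mom} delivers since $|\gamma\theta|=\gamma\leq\kappa$; then $\int_{|y|>1}e^{-\scalp{\gamma\theta}{(u\theta-y)}}\nu(y)\,dy=e^{-\gamma u}\int_{|y|>1}e^{\scalp{\gamma\theta}{y}}\nu(y)\,dy$, which is $c\,e^{-\gamma u}$. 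Care is needed because the integration regions are not exactly $|y|>1$ but shifted; one absorbs the discrepancies using $\int\plj_t^r\,dy\leq e^{t|\nulj_r|}$ and the sub-exponential slack in $h$, exactly as the terms $\mathrm I_1,\mathrm I_2$ were handled in Lemma~\ref{l-42}. Combining, one gets $p_t(u\theta)\leq C\,e^{\alpha_0 t-\gamma_{\alpha_0}(\theta)u}$ for all $u\geq1$, $t>0$, with $C$ independent of $\alpha_0$ and $\theta$ after tracking constants (the $\alpha_0$-dependence is absorbed into $e^{\alpha_0 t}$). Finally, the resolvent bound follows by integrating:
\begin{gather*}
	g_{\alpha}(u\theta)=\int_0^\infty e^{-\alpha t}p_t(u\theta)\,dt\leq C e^{-\gamma_{\alpha_0}(\theta)u}\int_0^\infty e^{-(\alpha-\alpha_0)t}\,dt=\frac{C}{\alpha-\alpha_0}\,e^{-\gamma_{\alpha_0}(\theta)u},\quad\alpha>\alpha_0.
\end{gather*}
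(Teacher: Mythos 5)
Your sketch correctly assembles most of the ingredients (the small--time bound via \eqref{eq:small_time}, the Knopova--Schilling estimate \eqref{eq:small_est}, finiteness of the exponential moment via Lemma~\ref{lem:exp_mom}, and the final integration in $t$), but the heart of the $t>1$ argument has a genuine gap, and you do not notice that the paper has already packaged the decisive step as a standalone result.

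The paper's own proof simply applies Theorem~\ref{th:gen_exp}: since $|\gamma_{\alpha_0}(\theta)\theta|\leq\kappa$ has a finite exponential moment against $\nu$ by Lemma~\ref{lem:exp_mom}, Theorem~\ref{th:gen_exp} yields $p_t(x)\leq p_t(0)\,e^{-\scalp{\xi_0}{x}+t\omega(\xi_0)}$ for any such $\xi_0$; choosing $\xi_0=\gamma_{\alpha_0}(\theta)\theta$ gives $\omega(\xi_0)\leq\alpha_0$ and $\scalp{\xi_0}{u\theta}=\gamma_{\alpha_0}(\theta)u$, and $p_t(0)$ is bounded uniformly for $t>1$ by \eqref{L1}. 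This gives the claim for \emph{all} $x$ at once with a constant $C$ that is uniform over $\alpha_0$ and $\theta$. Your proposal, by contrast, re-derives a decomposition bound from scratch and in doing so replaces the exact compound-Poisson moment computation by the cruder Lemma~\ref{convolutions}. That substitution is what breaks.

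Concretely, in your term $\mathrm I_3$ you would bound $\plj_t^r(y)\leq C_9(r)\,t\,e^{-t|\nulj_r|}e^{(K(r)\vee|\nulj_r|)\,t}\nu(y)$ and then integrate $e^{\scalp{\gamma\theta}{y}}\nu(y)\,dy$. The resulting exponent is $\omega_r(\gamma\theta)+(K(r)\vee|\nulj_r|)-|\nulj_r|$, which has no reason to be $\leq\alpha_0$: the correction from $K(r)$ does \emph{not} equal the missing big-jump contribution $\bar\omega_r(\gamma\theta)=\omega(\gamma\theta)-\omega_r(\gamma\theta)$; for small $\gamma$ the latter can be much smaller than $K(r)-|\nulj_r|$. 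You would thus get an exponent strictly larger than $\alpha_0$, and since that overshoot multiplies $t$, it cannot be absorbed into a multiplicative constant. Moreover, even if you shrink it by sending $r\to\infty$, the constant $C_9(r)$ in Lemma~\ref{convolutions} blows up (it involves $f(r)/f(3r)\sim e^{2\kappa r}$), and you would have to let $r$ depend on $\alpha_0$, ruling out the uniform constant the Lemma asserts. The clean mechanism, which you should invoke rather than approximate, is that $\int e^{\scalp{\xi}{y}}\bigl(e^{-t|\nulj_r|}\delta_0(dy)+\plj_t^r(y)\,dy\bigr)=e^{t\bar\omega_r(\xi)}$ \emph{exactly} (this is \cite[Theorem 25.17]{Sato}, used in the proof of Theorem~\ref{th:gen_exp}); combined with $\psj_t^r(x-y)\leq\psj_t^r(0)e^{-\scalp{\xi}{x-y}+t\omega_r(\xi)}$ and $\omega_r+\bar\omega_r=\omega$, the exponents telescope to precisely $\omega(\xi)$ with no slack. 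That is the content of Theorem~\ref{th:gen_exp}, and it is the route the paper takes.
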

\begin{proof}
The second estimate follows directly from the first one by integration, so we need to estimate $p_t$ only. By \eqref{eq:small_time}, for every $\alpha_0>0$, 
\begin{gather*}
	p_t(u \theta) 
	\leq c_1 t f(u) 
	\leq c_1 \exp\left(\alpha_0 t-\gamma_{\alpha_0}(\theta)\, u\right), 
	\quad u \geq 1,\; \theta \in \sphere^{d-1},\; t\in (0,1].
\end{gather*}
Assume from now on that $t>1$. Since, by Lemma \ref{lem:exp_mom},
\begin{gather*} 
	\int_{|y| \geq 1} e^{\scalp{\xi}{y}}\nu(y) \,dy < \infty,
\end{gather*}
for every $\xi \in \Rd$ such that $|\xi| \leq \kappa$, we can use Theorem \ref{th:gen_exp} to get
\begin{gather*}
	p_t(u \theta) 
	\leq p_t(0) e^{-u\, \scalp{\xi}{\theta} + t\omega(\xi)},  
	\quad u \geq 1,\; \theta \in \sphere^{d-1},\; t >1,\; |\xi| \leq \kappa.
\end{gather*}
By \eqref{L1},
\begin{gather*}
	p_t(0) 
	= (2\pi)^{-d} \int_{\Rd} e^{-t\Psi(\xi)}\, d\xi 
	\leq (2\pi)^{-d} \int_{\Rd} e^{-\Psi(\xi)}\, d\xi < \infty, \quad t >1,
\end{gather*}
and, consequently, 
\begin{gather*}
	p_t(u \theta) 
	\leq c_2 e^{-u\, \scalp{\xi}{\theta} + t\omega(\xi)},  
	\quad u \geq 1,\; \theta \in \sphere^{d-1},\; t >1,\; |\xi| \leq \kappa,
\end{gather*}
with a uniform constant $c_2>0$. It follows directly from the definition of $\gamma_{\alpha_0}(\theta)$ that $\gamma_{\alpha_0}(\theta) \leq \kappa$ and $\omega(\gamma_{\alpha_0}(\theta)\, \theta) \leq \alpha_0$, for any $\alpha_0 >0$. Hence, taking $\xi = \gamma_{\alpha_0}(\theta)\, \theta$, we obtain 
\begin{gather*}
	p_t(u \theta) 
	\leq c_2 \exp\left(-\gamma_{\alpha_0}(\theta)\, u + \alpha_0 t\right),  
	\quad u \geq 1,\; \theta \in \sphere^{d-1},\; t >1,
\end{gather*}
which is the claimed bound. This completes the proof. 
\end{proof}

Recall that for a radial L\'evy measure $\nu$, the function $\omega$ is radial as well, and $\gamma_{\alpha_0}$ is constant on $\sphere^{d-1}$ for every $\alpha_0>0$. In that case it is convenient to set $\varpi(|\xi|) = \omega(\xi)$.

\begin{lemma} \label{lem:exp_1_lower} 
	Assume \eqref{L1}, \eqref{L2} and that the L\'evy measure $\nu$ is radial, i.e.\ $\nu(x)=\nu(|x|)$. Let $0< \alpha_0 < \varpi(\kappa)$ and $\sigma:(0,\infty) \to (0,\infty)$ be such that 
\begin{gather*} 
  \lim_{r\to\infty} \sigma(r) = 0 
	\quad\text{and}\quad 
	\lim_{r\to\infty} \frac{\sigma(r) r}{\log(1/\sigma(r))+\log r} = \infty.
\end{gather*}
Then, for every $\epsilon \in (0,1)$ there exist $\rho = \rho(\epsilon,\alpha_0)$ and $C=C(\epsilon,\alpha_0)>0$ such that 
\begin{gather} \label{eq:pt_lower}
	C e^{\alpha_0 t } 
	\leq e^{(\gamma_{\alpha_0}+\epsilon)|x|} p_t(x)  
			+ p_t(0) e^{\alpha_0 t } e^{(1+\epsilon)\left(\varpi^{\prime}(\gamma_{\alpha_0}) \frac{t}{|x|} - 1\right) \sigma(|x|)|x|}, 
	\quad |x| \geq \rho,\; t \geq \frac{|x|}{\varpi^{\prime}}(\gamma_{\alpha_0}).
\end{gather}
and
\begin{gather*}
	g_{\alpha}(x) 
	\geq \frac{C}{2(\alpha - \alpha_0)} \exp\left(-\left(\gamma_{\alpha_0}+\epsilon+\frac{\alpha-\alpha_0}{\varpi^{\prime}(\gamma_{\alpha_0})}\right)|x|\right), 
	\quad |x| \geq \rho,\; \alpha > \alpha_0.
\end{gather*}
\end{lemma}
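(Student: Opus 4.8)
Since $\nu$ is radial, so is $p_t$, and it suffices to treat $x=r\theta$ with $r=|x|$ and $\theta\in\sphere^{d-1}$. I would write $\gamma:=\gamma_{\alpha_0}$ and $v:=\varpi'(\gamma)$, noting $\gamma\in(0,\kappa)$, $\varpi(\gamma)=\alpha_0$ and $0<v<\infty$ because $\alpha_0<\varpi(\kappa)$. The main tool is the exponential tilt in the direction $\theta$: by Lemma~\ref{lem:exp_mom} and \cite[Theorem~25.17]{Sato} (exactly as in the proof of Theorem~\ref{th:gen_exp}) one has $\int_{\Rd}e^{\scalp{\zeta}{y}}p_t(y)\,dy=e^{t\omega(\zeta)}$ for $|\zeta|\le\kappa$, so $p_t^{\theta}(dy):=e^{\gamma\scalp{\theta}{y}-\alpha_0 t}p_t(y)\,dy$ is a probability measure with Laplace transform $\zeta\mapsto e^{t(\omega(\gamma\theta+\zeta)-\alpha_0)}$ on $\{|\gamma\theta+\zeta|\le\kappa\}$ and, by radiality of $\omega$, barycentre $t\,\varpi'(\gamma)\theta=vt\theta$. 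Put $\Delta:=vt-r\ge0$; the error exponent in \eqref{eq:pt_lower} equals $(1+\epsilon)(\varpi'(\gamma)t/r-1)\sigma(r)r=(1+\epsilon)\Delta\sigma(r)$. It is enough to establish \eqref{eq:pt_lower}, since the lower bound for $g_\alpha$ then follows by integrating against $e^{-\alpha t}$.

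First I would prove \eqref{eq:pt_lower} when $\Delta$ is small, say $0\le\Delta<L/2$ with $L=L(r):=\min\{\tfrac{\epsilon r}{2(\gamma+\beta)},\tfrac r4\}$ and $\beta:=-2\log C_7>0$ ($C_7$ from Lemma~\ref{pxy}); then $L\asymp r$, $(\gamma+\beta)L\le\tfrac12\epsilon r$ and $t\asymp r$. Split $e^{\alpha_0 t}=\int_{\Rd} e^{\gamma\scalp{\theta}{y}}p_t(y)\,dy=J_1+J_2$ with $J_1=\int_{|y-x|\le L}$ and $J_2=\int_{|y-x|>L}$. On $\{|y-x|\le L\}$ one has $\scalp{\theta}{y}\le r+L$, and iterating Lemma~\ref{pxy} along the segment $[x,y]$ (which stays at distance $\ge r-L\ge\tfrac34 r\ge1$ from the origin) gives $p_t(y)\le C_7^{-1}e^{\beta L}p_t(x)$; hence $J_1\le C_7^{-1}|B_1(0)|\,L^d e^{(\gamma+\beta)L}e^{\gamma r}p_t(x)\le e^{(\gamma+\epsilon)r}p_t(x)$ for $r$ large. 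For $J_2=e^{\alpha_0 t}p_t^{\theta}(\{|y-x|>L\})$, since the barycentre is within $\Delta<L/2$ of $x$ the event lies in $\{|y-vt\theta|>L/2\}$, and I would bound its $p_t^{\theta}$-probability by Chernoff estimates against the $2d$ directions $\pm\theta,\pm e_2,\dots,\pm e_d$, using the Laplace transform above with a \emph{fixed} small tilt $s_0>0$ (not the optimal one) and the strict convexity of $\varpi$ (which makes $\varpi(\gamma\pm s_0)-\varpi(\gamma)\mp s_0v$ and $\varpi(\sqrt{\gamma^2+s_0^2})-\varpi(\gamma)$ positive); this gives $p_t^{\theta}(\{|y-vt\theta|>L/2\})\le 2d\,e^{-cr}$ for some $c>0$. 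Combining with $p_t(0)\ge(2\pi)^{-d}e^{-1}|B_{\Psi^*_{-}(1/(2t))}(0)|\ge c't^{-N}$ (from $\{\Psi\le1/(2t)\}\supseteq B_{\Psi^*_{-}(1/(2t))}(0)$ and Lemma~\ref{h_upper_est}) and $t\asymp r$, one gets $J_2\le p_t(0)e^{\alpha_0 t}\le p_t(0)e^{\alpha_0 t}e^{(1+\epsilon)\Delta\sigma(r)}$ for $r\ge\rho$, so $J_1+J_2$ yields \eqref{eq:pt_lower} with $C=1$ in this regime.

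When $\Delta\ge L/2$ I would instead argue that the error term alone dominates: $p_t(0)e^{(1+\epsilon)\Delta\sigma(r)}\ge1$, because $p_t(0)\ge c'((r+\Delta)/v)^{-N}$ reduces this to $(1+\epsilon)\Delta\sigma(r)\ge N\log((r+\Delta)/v)+O(1)$, and for $L/2\le\Delta\le r$ the right side is $O(\log r)$ while the left is $\asymp r\sigma(r)\gtrsim\log r$ by hypothesis, whereas for $\Delta>r$ the map $\Delta\mapsto(1+\epsilon)\Delta\sigma(r)-N\log(r+\Delta)$ has non-negative derivative on $[r,\infty)$ (again since $r\sigma(r)\to\infty$), hence exceeds its value at $\Delta=r$, which is positive for $r$ large. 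Thus \eqref{eq:pt_lower} holds with $C=1$ for all $t\ge r/v$, i.e.\ $p_t(x)\ge e^{-(\gamma+\epsilon)r}e^{\alpha_0 t}(1-p_t(0)e^{(1+\epsilon)\Delta\sigma(r)})$. Integrating the positive part against $e^{-\alpha t}$ over $[r/v,\infty)$ (legitimate since $p_t(x)\ge0$): the main part contributes $\tfrac1{\alpha-\alpha_0}e^{-(\alpha-\alpha_0)r/v}$, while, writing $e^{(1+\epsilon)\Delta\sigma(r)}=e^{-(1+\epsilon)r\sigma(r)}e^{(1+\epsilon)v\sigma(r)t}$ and using $p_t(0)\le p_{r/v}(0)=:\delta_r\to0$ (the map $t\mapsto p_t(0)=\|p_{t/2}\|_2^2$ is decreasing since $P_s$ is an $L^2$-contraction), the error part is $\le\tfrac{2\delta_r}{\alpha-\alpha_0}e^{-(\alpha-\alpha_0)r/v}$ once $\alpha-\alpha_0>(1+\epsilon)v\sigma(r)$, which holds for $r$ large. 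As $\delta_r\to0$ this gives $g_\alpha(x)\ge\tfrac1{2(\alpha-\alpha_0)}\exp(-(\gamma_{\alpha_0}+\epsilon+(\alpha-\alpha_0)/\varpi'(\gamma_{\alpha_0}))r)$ for $r$ large, which is the second assertion with $C=1$.

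The hard part is the bookkeeping in the large-$\Delta$ regime: the error term has to absorb the trivial estimate $p_t^{\theta}(\{|y-x|>L\})\le1$ uniformly over $t\ge r/v$, which forces a comparison of the (merely polynomial) lower bound $p_t(0)\gtrsim t^{-N}$ with $e^{(1+\epsilon)\Delta\sigma(r)}$; this is exactly why the growth condition $\sigma(r)r/(\log(1/\sigma(r))+\log r)\to\infty$ is imposed. The Chernoff step in the small-$\Delta$ regime is routine, but one must \emph{not} optimise the tilt (the optimal tilt stays bounded away from $0$, since the deviation is of the same order as $t$), and a fixed small $s_0$ together with strict convexity of $\varpi$ is what makes the estimate close.
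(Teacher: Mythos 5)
Your argument is sound and reaches the same conclusion, but the treatment of the ``far'' region is genuinely different from the paper's. The paper splits $e^{t\varpi(\gamma_{\alpha_0})}=\int e^{\gamma_{\alpha_0}\scalp{\theta}{y}}p_t(y)\,dy$ into the three regions $\{|y|\le(1-\delta)|x|\}$, $\{(1-\delta)|x|<|y|<(1+\delta)|x|\}$, $\{|y|\ge(1+\delta)|x|\}$, handling the inner annulus by iterating Lemma~\ref{pxy} (as you do for $J_1$), and the two outer regions by applying the pointwise density bound of Theorem~\ref{th:gen_exp} with the $y$-\emph{dependent radial} tilts $\xi_{1,2}(x,y)=(\gamma_{\alpha_0}\mp\sigma(|x|))\,y/|y|$. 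The $\sigma$-modulated size of the tilt then produces the error factor $e^{(1+\epsilon)(\varpi'(\gamma_{\alpha_0})t/|x|-1)\sigma(|x|)|x|}$ after a mean-value-theorem step on $\varpi$, and this works uniformly in $t\ge|x|/\varpi'(\gamma_{\alpha_0})$ with no case split. You instead split at $\{|y-x|\le L\}$ and treat the complement by a Chernoff estimate for the tilted law $p_t^\theta$ at a fixed $s_0>0$, which only closes in the regime $\Delta=vt-|x|\lesssim L$, and then you supply a separate ``the error term alone already dominates'' argument for $\Delta\gtrsim L$, relying on the polynomial lower bound $p_t(0)\gtrsim t^{-d/\alpha}$ and the growth hypothesis on $\sigma$. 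Both routes give \eqref{eq:pt_lower}; yours is a nice concentration-of-measure reading of the same exponential moment identity, whereas the paper's single $y$-dependent tilt is more economical and avoids the case split in $t$.

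Two remarks. First, in your Chernoff step the decisive fact is not strict convexity of $\varpi$ per se but differentiability at $\gamma_{\alpha_0}$, i.e.\ $\varpi(\gamma_{\alpha_0}\pm s_0)-\varpi(\gamma_{\alpha_0})\mp s_0\varpi'(\gamma_{\alpha_0})=o(s_0)$ as $s_0\to0$ (and similarly for $\varpi(\sqrt{\gamma_{\alpha_0}^2+s_0^2})-\varpi(\gamma_{\alpha_0})$): this is what lets the linear term $-s_0 L/(2\sqrt d)$ win over $t\cdot o(s_0)$ with $t\asymp L\asymp|x|$ once $s_0$ is small but fixed. Second, in the final integration you (quietly, and the paper just as quietly) need $\alpha-\alpha_0\ge c\,\varpi'(\gamma_{\alpha_0})\sigma(|x|)$ for the error integral to converge and be absorbed; since $\sigma\to0$ this holds for all $|x|\ge\rho$ with $\rho$ depending on $\alpha$ as well as on $(\epsilon,\alpha_0)$. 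The paper's \eqref{eq:ptzero} has exactly the same implicit restriction, so this is a shared imprecision in the statement rather than a gap in your argument; it is harmless because in the applications $\alpha>\alpha_0$ is fixed before $|x|\to\infty$.
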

\begin{proof} 
We first show \eqref{eq:pt_lower}. Note that $\gamma_{\alpha_0} < \kappa$. By Lemma \ref{lem:exp_mom}, we have for every $\xi \in \Rd$ such that $|\xi| \leq \kappa$
\begin{gather*}
	\int_{|y| \geq 1} e^{\scalp{\xi}{y}}\nu(y) \,dy < \infty.
\end{gather*}
Hence, by \cite[Theorem 25.17]{Sato}, for every $|x|>1$, $t>0$, $\theta \in \sphere^{d-1}$ and $\delta \in (0,1)$, 
\begin{align*}
	e^{t \varpi(\gamma_{\alpha_0})} 
	&= \int_{\Rd} e^{\gamma_{\alpha_0} \scalp{\theta}{y}}\, p_t(y) \,dy \\ 
  	&\leq \left(\int_{|y| \leq (1-\delta)|x|} + \int_{|y| \geq (1+\delta)|x|}+ \int_{(1-\delta)|x| < |y| < (1+\delta)|x|} \right) e^{\gamma_{\alpha_0}|y|}\, p_t(y) \, dy =: \mathrm{I}_1 + \mathrm{I}_2 + \mathrm{I}_3.
\end{align*}
We can pick $\rho > 1$ so large that 
\begin{gather*}
	0 < \gamma_{\alpha_0} - \sigma(r) < \gamma_{\alpha_0} + \sigma(r) \leq \kappa, \quad r \geq \rho.
\end{gather*}
Throughout the rest of the proof we assume that $|x| \geq \rho$. Take
\begin{gather*}
	\xi_1 
	= \xi_1(x,y) 
	:= \left(\gamma_{\alpha_0}  - \sigma(|x|) \right) \frac{y}{|y|}, 
	\qquad 
	\xi_2
	= \xi_2(x,y) 
	:= \left(\gamma_{\alpha_0}  + \sigma(|x|) \right) \frac{y}{|y|}, \quad y \neq 0,
\end{gather*}
and notice that $|\xi_1| \leq |\xi_2| \leq \gamma_{\alpha_0} + \sigma(|x|) \leq \kappa$. We will now use the estimate from Theorem \ref{th:gen_exp} for the integrand $p_t(y)$ appearing in the expressions for $\mathrm{I}_1$ and $\mathrm{I}_2$ with $\xi_0=\xi_1$ and $\xi_0=\xi_2$, respectively. We get
\begin{align*}
	\mathrm{I}_1 
	&\leq p_t(0)  \int_{|y| \leq (1-\delta)|x|} e^{\gamma_{\alpha_0}|y|}  e^{-\scalp{\xi_1}{y}}e^{t\varpi(|\xi_1|)}\, dy \\
    &= p_t(0) e^{t\varpi(\gamma_{\alpha_0} - \sigma(|x|))} \int_{|y| \leq (1-\delta)|x|} e^{\gamma_{\alpha_0}|y|} e^{-\gamma_{\alpha_0}|y|} e^{\sigma(|x|) \, |y|} \,dy \\
    &\leq c_1 p_t(0) e^{t\varpi(\gamma_{\alpha_0} - \sigma(|x|))}  e^{(1-\delta)\sigma(|x|)|x|+d \log|x|} \\
	&= c_1 p_t(0) e^{t\varpi(\gamma_{\alpha_0})}e^{-t(\varpi(\gamma_{\alpha_0})-\varpi(\gamma_{\alpha_0} - \sigma(|x|)))}  e^{(1-\delta)\sigma(|x|)|x|+d \log|x|} .
\end{align*}
A similar calculation, where we also perform a change of variables in one of the integrals, yields
\begin{align*}
	\mathrm{I}_2 
	&\leq p_t(0)  \int_{|y| \geq (1+\delta)|x|} e^{\gamma_{\alpha_0}|y|} e^{-\scalp{\xi_2}{y}} e^{t\varpi(|\xi_2|)} \, dy \\
    &= p_t(0) e^{t\varpi(\gamma_{\alpha_0} + \sigma(|x|))} \int_{|y| \geq (1+\delta)|x|} e^{\gamma_{\alpha_0}|y|} e^{-\gamma_{\alpha_0}|y|}e^{-\sigma(|x|) |y|} \, dy \\
    &= p_t(0) e^{t\varpi(\gamma_{\alpha_0} + \sigma(|x|))} (1+\delta)^d\sigma(|x|)^{-d}\int_{|z| \geq \sigma(|x|)|x|} e^{-(1+\delta)|z|} \, dz\\
	&\leq c_2 p_t(0) e^{t\varpi(\gamma_{\alpha_0} + \sigma(|x|))}  e^{-(1+\delta/2)\sigma(|x|)|x| + d\log (1/\sigma(x))} \\
	&= c_2 p_t(0) e^{t\varpi(\gamma_{\alpha_0})}e^{t(\varpi(\gamma_{\alpha_0} + \sigma(|x|))-\varpi(\gamma_{\alpha_0}))}  e^{-(1+\delta/2)\sigma(|x|)|x| + d\log (1/\sigma(x))}.
\end{align*}
The constants $c_1$ and $c_2= c_2(\delta)$ appearing in these estimates do not depend on $t$, $|x|>1$ or $\alpha>\alpha_0$.
Increasing $\rho$ (if need be) and using the asymptotic properties of $\sigma(x)$ as $|x| \to \infty$, we can now show that
\begin{align*}
	-t\big(\varpi(\gamma_{\alpha_0}) &-\varpi(\gamma_{\alpha_0} - \sigma(|x|))\big) +  (1-\delta)\sigma(|x|)|x|+d \log|x| \\
	&\leq -t (1-\delta/2)\varpi^{\prime}(\gamma_{\alpha_0}) \sigma(|x|) + (1-\delta/2) \sigma(|x|)|x| \\
	&\leq (1-\delta/2)\left(-\varpi^{\prime}(\gamma_{\alpha_0}) \frac{t}{|x|} + 1\right) \sigma(|x|)|x|, \quad |x| \geq \rho,
\intertext{and}
	t\big(\varpi(\gamma_{\alpha_0} &+ \sigma(|x|)) - \varpi(\gamma_{\alpha_0})\big) - (1+\delta/2)\sigma(|x|)|x| + d\log (1/\sigma(x)) \\
	&\leq t (1+\delta/4) \varpi^{\prime}(\gamma_{\alpha_0}) \sigma(|x|) - (1+\delta/4) \sigma(|x|)|x| \\
	&= (1+\delta/4)\left(\varpi^{\prime}(\gamma_{\alpha_0}) \frac{t}{|x|} - 1\right) \sigma(|x|)|x|, \quad |x| \geq \rho.
\end{align*}
In the last two inequalities we use the mean value theorem and the fact that the derivative $\varpi' = \frac d{du}\varpi$ is positive and increasing, cf.\ Fig.~\ref{pic-graph-gamma}. Since $\varpi'\big|_{(0,\kappa)}$ is continuous and $\gamma_{\alpha_0}\in (0,\kappa)$, we can find for every $\delta \in (0,1)$ some $\rho=\rho(\delta,\alpha_0)$ such that for all $|x|>\rho$ the following inequality holds
\begin{gather*}
	\left|\varpi'(\gamma_{\alpha_0}) - \varpi'(\gamma_{\alpha_0}\mp\sigma(|x|))\right| 
	\leq \frac{\delta}{4}\varpi'(\gamma_{\alpha_0}).
\end{gather*}
Consequently, for every $|x| \geq \rho$ and $t \geq \frac{|x|}{\varpi^{\prime}(\gamma_{\alpha_0})}$ we obtain
\begin{gather*}
	e^{t \varpi(\gamma_{\alpha_0})} 
	 \leq \mathrm{I}_1 + \mathrm{I}_2 + \mathrm{I}_3
	\leq c_1 p_t(0) e^{t \varpi(\gamma_{\alpha_0})} + c_2 p_t(0) e^{t \varpi(\gamma_{\alpha_0})} e^{(1+\delta/4)\left(\varpi^{\prime}(\gamma_{\alpha_0}) \frac{t}{|x|} - 1\right) \sigma(|x|)|x|} + \mathrm{I}_3.
\end{gather*}
It remains to estimate the term $\mathrm{I}_3$. Using Lemma \ref{pxy} $\lfloor 2\delta|x|+1\rfloor$ times, we get $c_3=c_3(\delta,\alpha_0)$, $c_4=c_4(\delta,\alpha_0)$ such that
\begin{align*}
	I_3 
	&= \int_{(1-\delta)|x| < |y| < (1+\delta)|x|}  e^{\gamma_{\alpha_0}|y|} p_t(y)\, dy \\
    &\leq c_3 e^{(1+2\delta)\gamma_{\alpha_0}|x|} (1/C_7)^{2\delta|x| +1} p_t(x) \\
	&\leq c_4 e^{\gamma_{\alpha_0}|x| + 2\delta(\gamma_{\alpha_0}+\log(1/C_7))|x|} p_t(x),
\end{align*}
for $\delta>0$ as above and all $|x| \geq \rho$ and $t \geq \frac{|x|}{\varpi^{\prime}(\gamma_{\alpha_0})}$.
We may choose $\delta$ so small that for any $\epsilon\in (0,1)$ we have $0 < \max\{\delta/4, 2\delta (\gamma_{\alpha_0}+\log(1/C_7))\} < \epsilon < 1$. Therefore, there exist $\rho = \rho(\epsilon,\alpha_0)$ and $c_5=c_5(\epsilon,\alpha_0)>0$ such that for every $|x| \geq \rho$ and $t \geq \frac{|x|}{\varpi^{\prime}(\gamma_{\alpha_0})}$ we have
\begin{gather*}
	c_5 e^{t\alpha_0} 
	= c_5 e^{t \varpi(\gamma_{\alpha_0})} 
	\leq e^{(\gamma_{\alpha_0}+\epsilon)|x|} p_t(x)  + p_t(0) e^{t \varpi(\gamma_{\alpha_0})} e^{(1+\epsilon)\left(\varpi^{\prime}(\gamma_{\alpha_0}) \frac{t}{|x|} - 1\right) \sigma(|x|)|x|}. 
\end{gather*}
This proves \eqref{eq:pt_lower}. 

\bigskip
We will now use \eqref{eq:pt_lower} to show the bound for the resolvent density. Let $\alpha > \alpha_0$. In view of \eqref{L1} we may assume that $\rho$ is large enough so that 
\begin{align}\label{eq:ptzero}
	p_t(0) 
	\leq \frac{c_5}{2} \frac{\alpha - \varpi(\gamma_{\alpha_0}) - (1+\epsilon)\varpi^{\prime}(\gamma_{\alpha_0}) \sigma(|x|)}{\alpha - \varpi(\gamma_{\alpha_0})}, 
	\quad\text{for}\quad 
	t \geq \frac{|x|}{\varpi^{\prime}(\gamma_{\alpha_0})} 
	\geq \frac{\rho}{\varpi^{\prime}(\gamma_{\alpha_0})}.
\end{align}
We multiply both sides of \eqref{eq:pt_lower} with $e^{-\alpha t}$ and integrate over $\frac{|x|}{\varpi^{\prime}(\gamma_{\alpha_0})} < t < \infty$. Using the estimate \eqref{eq:ptzero}, we get
\begin{align*}
	&\frac{c_5}{\alpha - \alpha_0}  e^{-\frac{\alpha-\alpha_0}{\varpi^{\prime}(\gamma_{\alpha_0})}|x|} \\
	&\leq e^{(\gamma_{\alpha_0}+\epsilon)|x|} g_{\alpha}(x)
 	+ \frac{c_5}{2} \frac{\alpha - \alpha_0 - (1+\epsilon)\varpi^{\prime}(\gamma_{\alpha_0}) \sigma(|x|)}{\alpha - \alpha_0}\frac{1}{\alpha - \alpha_0 - (1+\epsilon)\varpi^{\prime}(\gamma_{\alpha_0}) \sigma(|x|)} e^{-\frac{\alpha-\alpha_0}{\varpi^{\prime}(\gamma_{\alpha_0})}|x|}, 
\end{align*}
which can be rearranged in the following way:
\begin{gather*}
	g_{\alpha}(x) 
	\geq \frac{c_5}{2(\alpha - \alpha_0)}  e^{-(\gamma_{\alpha_0}+\epsilon)|x|} e^{-\frac{\alpha-\alpha_0}{\varpi^{\prime}(\gamma_{\alpha_0})}|x|}, \quad |x| \geq \rho.
\end{gather*}
This completes the proof. 
\end{proof}

\begin{proof}[Proof of Theorem \ref{th:exp}] 
Part \ref{th:exp-1} of the theorem is just Lemma \ref{lem:exp_1}. 

The upper bound in Part~\ref{th:exp-2} follows from the estimate in Part~\ref{th:exp-1} and the continuity of the map $\alpha \mapsto \gamma_{\alpha}$ in the following way: fix $\alpha >0$ and $\epsilon>0$ and choose $\delta >0$ so small that $\gamma_{\alpha-\delta} \geq \gamma_{\alpha} - \epsilon$; set $\alpha_0 = \alpha - \delta$.

The lower estimate in Part~\ref{th:exp-2} is a consequence of Lemma \ref{lem:exp_1_lower}. 
\emph{Case 1:} $\alpha < \varpi(\kappa)$. Since the map $(0, \varpi(\kappa)) \ni \alpha \mapsto \varpi^{\prime}(\gamma_{\alpha})$ is continuous (it may become infinite at $\varpi(\kappa)$, e.g.\ in the relativistic case, see Fig.~\ref{pic-graph-gamma}), we can find for any $\epsilon >0$ a $\delta = \delta(\epsilon) > 0$ such that $\delta \leq \frac 12 \epsilon \varpi^{\prime}(\gamma_{\alpha-\delta})$. We can now use the lower bound for the resolvent density from Lemma \ref{lem:exp_1_lower} with $\frac 12\epsilon$ and $\alpha_0 = \alpha - \delta$. Since $\gamma_{\alpha-\delta} \leq \gamma_{\alpha}$, we get the assertion for $|x| \geq \rho$, for some $\rho= \rho(\epsilon,\alpha)$. 
For $1 \leq |x| \leq \rho$, however, the claimed bound is trivially true, as the functions appearing on either side are bounded above and bounded away from zero. 

\emph{Case 2:} $\alpha\geq \varpi(\kappa)$. In this case we use \eqref{eq:res_low} to get $g_\alpha(x)\geq c f(|x|)$. Since $f(|x|)=e^{-\kappa|x|}h(|x|)$ and, since $h$ is subexponential, we conclude that for all $|x|$ and any $\epsilon >0$ there exists $c = c(\epsilon)$ such that  $f(|x|) \geq c e^{-(\kappa+\epsilon)|x|}$.
\end{proof}

For the proof of Theorem~\ref{th:exp_sharp} let us recall that
\begin{gather*}
	\omega^*(\kappa):= \sup_{\theta \in \sphere^{d-1}} \omega(\kappa \theta). 
\end{gather*}

\begin{proof}[Proof of Theorem~\ref{th:exp_sharp}] 
Since the estimate for the resolvent density $g_\alpha$ follows from the heat-kernel bound by integration, it is enough to estimate $p_t$ only. From \eqref{eq:small_time} we get
\begin{gather*}
	p_t(x) 
	\leq c_1 t f(|x|) \leq c_1 e^{\alpha_0 t} f(|x|), 
	\quad |x| \geq 1, \;t\in (0,1].
\end{gather*}
Therefore, we can assume in the remaining part of the proof that $t>1$. By \eqref{eq:small_est},
\begin{gather*}
	 \psj_t^r(x) \leq c_2 e^{-\scalp{\xi}{x}+t\omega_r(\xi)}, \quad x,\, \xi\in\Rd,\; r \geq 1,
\intertext{where}
	\omega_r(\xi) 
	= \int \left(\cosh (\scalp{\xi}{y}) - 1\right) \nusj_r(dy) 
	= \int_{|y|<r} \left(\cosh (\scalp{\xi}{y}) - 1\right) \nu(dy).
\end{gather*}
Using this estimate and the decomposition \eqref{eq:decomp}, we have for every $r, R >1$, $|x| \geq 3R$ and $\xi_1, \xi_2 \in \Rd$ 
\begin{align*}
	p_t(x) 
	&= e^{-t|\nulj_r|} \psj_t^r(x) + \psj_t^r\ast \plj_t^r(x) \\
    &\leq \psj_t^r(x) + \left(\int_{|y| \leq 1} + \int_{\substack{|y| > 1\\ |y-x| > R}} + \int_{|y-x| \leq R} \right) \psj_t^r (x-y) \plj_t^r(y)\,dy \\
	&\leq  c_2 \left(e^{-\scalp{\xi_1}{x} + t\omega_r(\xi_1)} + e^{t\omega_r(\xi_1)} \int_{|y| \leq 1} e^{-\scalp{\xi_1}{(x-y)}}\plj_t^r(y)\,dy
	+ \int_{\substack{|y| > 1\\ |y-x| > R}} e^{t\omega_r(\xi_2)}  e^{-\scalp{\xi_2}{(x-y)}}\plj_t^r(y)\,dy\right) \\
	&\hphantom{\qquad c_2 \left(e^{t\omega_r(\xi_1)} \int_{|y| \leq 1} e^{-\scalp{\xi_1}{(x-y)}}\plj_t^r(y)\,dy
		+ \int_{\substack{|y| > 1\\ |y-x| > R}} e^{t\omega_r(\xi_2)}  e^{-\scalp{\xi_2}{(x-y)}}\plj_t^r(y)\,dy\right)}
	\mbox{} + \sup_{|y-x| \leq R} \plj_t^r(y) \\
	& =: c_2 \left(\mathrm{I}_1 + \mathrm{I}_2 + \mathrm{I}_3\right) + \mathrm{I}_4.
\end{align*}
We now take specific values for $\xi_1, \xi_2$:
\begin{align*}
	\xi_1 &= a \frac{x}{|x|} &\text{where}&& a = a(x) &= \kappa-\frac{\log h(|x|)}{|x|},\\
	\xi_2 &= b \frac{x-y}{|x-y|}&\text{where}&& b = b(x,y) &= \kappa-\frac{\log h(|x-y|)}{|x-y|}.
\end{align*}
Denote
\begin{gather*}
	\delta_R := \sup_{|z| \geq R} \frac{|\log h(|z|)|}{|z|} < \infty
\intertext{and}
	\omega_r^*(u) := \sup_{|\xi|=u} \omega_r(\xi), 
	\qquad 
	\omega_r^*(u,R) := \sup_{u-\delta_R \leq |\xi| \leq u+\delta_R} \omega_r(\xi), \quad u >0.
\end{gather*}
Then we get
\begin{gather*}
	\mathrm{I}_1 + \mathrm{I}_2 
	= e^{t\omega_r\left(a \frac{x}{|x|}\right)} f(|x|) \left(1+ \int_{|y| \leq 1} e^{a \frac{\scalp{x}{y}}{|x|}}\plj_t^r(y) \,dy\right) 
    \leq e^{t\omega_r^*(\kappa,R)} f(|x|) \left(1+ e^{\kappa+\delta_R}\right) 
\intertext{and}
	\mathrm{I}_3 
	\leq \int_{\substack{|y| > 1\\ |y-x| > R}}e^{t\omega_r\left(b \frac{x-y}{|x-y|}\right)} f(|x-y|)\plj_t^r(y)\,dy 
	\leq e^{t\omega_r^*(\kappa,R)} \int_{\substack{|y| > 1\\ |y-x| > 1}} f(|x-y|)\plj_t^r(y) \,dy.
\end{gather*}
Using Lemma \ref{convolutions}, the fact that \eqref{L2} implies $K_f(1) < \infty$, and \eqref{eq:comp}, we further obtain
\begin{gather*}
	\mathrm{I}_3 
	\leq c_3 t e^{t(K(r)+\omega_r^*(\kappa,R))} \int_{\substack{|y| > 1\\ |y-x| > 1}} f(|x-y|)f(|y|) \,dy
    \leq c_4 t e^{t(K(r)+\omega_r^*(\kappa,R))}  f(|x|)
\intertext{and}
	\mathrm{I}_4 
	\leq c_5 t e^{t K(r)} \sup_{|y-x| \leq R} f(|y|) 
	\leq c_6 t e^{t K(r)} f(|x|),
\end{gather*}
with $c_4=c_4(r)$ and $c_6=c_6(r,R)$. 

If we collect all of the above estimates, we see
\begin{gather*}
	p_t(x) 
	\leq c_7 t e^{t(K(r)+\omega_r^*(\kappa,R))}  f(|x|),
	\quad  r, R >1,\; |x| \geq 3R,\; t>1,
\end{gather*}
with $c_7=c_7(r,R)$. 

In order to get the claimed estimate, we fix $\alpha_0>\omega^*(\kappa)$. By \eqref{L2} we can choose $r>1$ large enough so that
\begin{gather*}
	K(r) \leq \frac{\alpha_0-\omega^*(\kappa)}{3}.
\end{gather*}
Because of the continuity of the map $\xi \mapsto \omega_r(\xi)$ (with $r$ as before) and the assumption \eqref{eq:g_sub_exp_aux} on the profile $h$, we can find $R>1$ so large that 
\begin{gather*}
	\omega_r^*(\kappa,R) \leq \omega_r^*(\kappa) + \frac{\alpha_0-\omega^*(\kappa)}{3}.
\end{gather*}
In particular, as $\omega_r(\kappa \theta) \leq \omega(\kappa \theta) \leq \omega^*(\kappa)$ for every $\theta \in \sphere^{d-1}$, 
\begin{gather*}
	\omega_r^*(\kappa,R) \leq \omega^*(\kappa) + \frac{\alpha_0-\omega^*(\kappa)}{3}.
\end{gather*}
Furthermore, we can easily find a constant $c_8>0$ such that $t \leq c_8 \exp\left(\frac{\alpha_0-\omega^*(\kappa)}{3} t\right)$, $t > 1$. Therefore, we conclude that there is some $c_9>0$ such that 
\begin{gather*}
	p_t(x) 
	\leq c_9 e^{t(3\frac{\alpha_0-\omega^*(\kappa)}{3} + \omega^*(\kappa) )}  f(|x|) 
	= c_9 e^{\alpha_0 t} f(|x|),
	\quad |x| \geq \rho,\; t>1,
\end{gather*}
with $\rho := 3R$. This completes the proof of Theorem~\ref{th:exp_sharp}.
\end{proof}

\section{Decay of the bound states} \label{sec:bound-states}

In this section we prove Corollaries~\ref{cor:bound_state} and~\ref{cor:bound_state_exp}. The corresponding proofs in the papers \cite{Carmona-Masters-Simon, Kaleta-Lorinczi} are probabilistic -- they are based on the Feynman--Kac formula and, therefore, they are restricted to the potentials from the Kato class for the operator $L$, see \cite{Demuth-Casteren}. Here we use an analytic approach, which is based on quadratic forms and the perturbation formula. This proof works well for negative potentials, even if they are more singular. 

Throughout this section it is convenient to write $e^{tL}$ instead of $P_t$. Recall that the quadratic form related to the operator $-L$ is defined as
\begin{gather*} 
	\Ecal^{(-L)}(u,v) := \lim_{t\searrow 0} \Ecal^{(-L)}_t (u,v), \quad u, v \in \Dcal({\Ecal^{(-L)}}),
\end{gather*} 
where
\begin{gather*}
  \Ecal^{(-L)}_t (u,v) := \frac{1}{t} \left\langle u-e^{tL} u,v \right\rangle 
  \quad\text{and}\quad 
  \Dcal(\Ecal^{(-L)}) := \left\{u\in L^2(\Rd):\: \lim_{t\searrow 0} \Ecal^{(-L)}_t(u,u) < \infty \right\}, 
\end{gather*} 
see \cite{Fukushima} or \cite[Chapter 6]{Casteren}. It is a simple consequence of the spectral representation of the operators $e^{tL}$, $t>0$, that the map
\begin{align} \label{eq:decr}
	(0,\infty) \ni t \mapsto \Ecal^{(-L)}_t (u,u) \text{\ is decreasing for every\ } u \in L^2(\Rd).
\end{align}
Therefore, the (improper) limit $\lim_{t\searrow 0} \Ecal^{(-L)}_t (u,u) \in [0,\infty]$ always exists for all $u \in L^2(\Rd)$. Recall that the form $\left(\Ecal^H, \Dcal(\Ecal^H)\right)$ of the Schr\"odinger operator $H = -L+V$ is given by \eqref{eq:eq_forms}; in particular, $\Dcal(\Ecal^H) = \Dcal(\Ecal^{(-L)})$. Clearly, $\Ecal^H$ can also be represented in terms of the semigroup $\left\{e^{-tH}, t  \geq 0\right\}$ in a similar way as above.

By a general argument from spectral theory, we have for $u, v \in L^2(\Rd)$ that $e^{-t H} u \in \Dcal(\Ecal^{H})$ and $e^{t L} v \in \Dcal(\Ecal^{(-L)})$, $t>0$, cf.\ \cite[Lemma 1.3.3 (i)]{Fukushima}. Thus,
\begin{gather*}
	\frac{d}{ds} \left\langle e^{-(t-s) H} u, e^{s L} v \right\rangle 
	= \Ecal^{H} (e^{-(t-s) H} u,e^{s L} v) - \Ecal^{(-L)} (e^{-(t-s) H} u,e^{s L} v) 
\end{gather*}
for all $u, v \in L^2(\Rd)$ and $0<s<t$. Integrating both sides of this equality and using \eqref{eq:eq_forms}, we get 
\begin{align}\label{eq:weak_perturb_form}
	\left\langle u,  e^{t L} v \right\rangle - \left\langle e^{-t H} u, v \right\rangle 
	=  \int_0^t \left\langle  V e^{-(t-s) H} u,  e^{s L} v \right\rangle ds, 
	\quad u, v \in L^2(\Rd),\; t>0.
\end{align}

\begin{proof}[Proof of Corollaries \ref{cor:bound_state} and \ref{cor:bound_state_exp}]
Let $\phi \in L^2(\Rd)$ be an eigenfunction of $H$ with eigenvalue $\lambda<0$ as in \eqref{eq:eigenequation}. Use \eqref{eq:weak_perturb_form} with $u = \phi$ and an arbitrary $v \in \Dcal(\Ecal^{(-L)})$. Because of the self-adjointness of $e^{t L}$ and the eigenequation $e^{-t H} \phi = e^{-\lambda s} \phi$, we have
\begin{gather}\label{eq:perturb_eigeneq}
	\left\langle \phi,  v \right\rangle 
	= e^{\lambda  t} \left\langle  e^{t L} \phi,v \right\rangle - \int_0^t e^{\lambda s} \left\langle V \phi, e^{s L} v \right\rangle\, ds, \quad t>0.
\end{gather}
By the Cauchy--Schwarz inequality, 
\begin{gather*}
	|\left\langle |V| |\phi|, e^{s L}  v \right\rangle|^2 
	\leq  \left\langle |V| |\phi|, |\phi| \right\rangle \left\langle |V| e^{s L}  v, e^{s L}  v \right\rangle
\end{gather*}
and by \eqref{eq:relative_bdd} and \cite[Lemma 1.3.3 (i)]{Fukushima}, we can further write
\begin{gather*}
	\left\langle |V| |\phi|, |\phi| \right\rangle 
	\leq a \Ecal^{(-L)}(|\phi|,|\phi|) + b \left\|\phi\right\|_{L^2}^2 < \infty,\\
	\left\langle |V| e^{s L} v, e^{s L} v \right\rangle 
	\leq a \Ecal^L(e^{s L} v,e^{s L} v) + b \left\|e^{s L} v \right\|_{L^2}^2 
	\leq a \Ecal^L(v,v) + b \left\| v \right\|_{L^2}^2 < \infty, 
	\quad s >0.
\end{gather*}
Moreover, $0 \leq e^{\lambda  t} |\left\langle  e^{t L} \phi,v \right\rangle| \leq e^{\lambda  t} \left\|\phi \right\|_{L^2} \left\|v \right\|_{L^2} \to 0$ as $t \to \infty$. Therefore, letting $t \to \infty$ in \eqref{eq:perturb_eigeneq}, using Fubini's theorem and the fact  that $|V|=-V$, reveals
\begin{gather*}
	\left\langle \phi,  v \right\rangle 
	=  \left\langle  \int_0^{\infty} e^{-\alpha s} e^{s L} (|V| \phi) \,ds,  v \right\rangle, 
	\quad v \in \Dcal(\Ecal^{(-L)}),
\end{gather*}
where $\alpha:= |\lambda| = - \lambda>0$. In particular, we have for Lebesgue almost all $x\in\Rd$
\begin{align}\label{eq:eig_aux_est}
	|\phi(x)| &\leq \int_{\Rd} g_{\alpha} (x-z) |V(z)| |\phi(z)| \,dz \qquad \text{(Lebesgue a.e.)},
\intertext{and, if $\phi$ is the ground state,}\label{eq:gs_aux_est}
	\phi(x) &= \int_{\Rd} g_{\alpha} (x-z) |V(z)| \phi(z)\, dz \qquad \text{(Lebesgue a.e.)}.
\end{align}
Since $V \leq 0$, the semigroup operators $e^{-t H}$ improve positivity (this property is inherited from $e^{t L}$, since \eqref{L1} guarantees the existence of transition densities) and the ground state is strictly positive, see \cite[Theorem XIII.44]{Reed-Simon}. By \eqref{H2}, $\phi$ is continuous on the set $\{z:|z|>r\}$. By \eqref{H2} and \eqref{eq:ass_potential}, the map $x \mapsto \int_{|z|>r} g_{\alpha} (x-z) |V(z)| |\phi(z)| \,dz$ is also continuous on $\{x:|x|>r\}$, as it is the convolution of an $L^{\infty}$-function and an $L^1$-function \cite[Theorem 14.8 (ii)]{Schilling2}. Using the continuity of the function $x \mapsto p_t(x)$, for every fixed $t>0$, \eqref{eq:sup_pt} and \eqref{eq:small_time}, it is clear that $\Rd \setminus \{0\} \ni x \mapsto g_{\alpha}(x)$ is again continuous. Further, let $0 \leq \psi \in C_c^{\infty}(\Rd) \subset \Dcal(\Ecal^L)$ be such that $\I_{B_r(0)} \leq \psi$. By the Cauchy--Schwarz inequality and \eqref{eq:relative_bdd},
\begin{align} \label{eq:loc_int}
	\int_{|y| < r} |V(y) \phi(y)| \,dy 
	\leq \left\langle |V| |\phi|, \psi \right\rangle 
	\leq  \sqrt{\left\langle |V| |\phi|, |\phi| \right\rangle \left\langle |V| \psi, \psi \right\rangle} 
	< \infty.
\end{align}
Thus, for $|x| \geq 2r$,
\begin{gather*}
	\int_{|z| < r} g_{\alpha} (x-z) |V(z)| |\phi(z)| \, dz 
	\leq \sup_{|z| \geq r} g_{\alpha}(z) \int_{|z| < r} |V(z)| |\phi(z)| \,dz < \infty,
\end{gather*}
and we conclude with dominated convergence that $x \mapsto \int_{|z| < r} g_{\alpha} (x-z) |V(z)| |\phi(z)| \,dz$ is continuous on the set $\{x:|x|>2r\}$.

From the above discussion we see that the functions on both sides of \eqref{eq:eig_aux_est} and \eqref{eq:gs_aux_est} are continuous on $\{x:|x|>2r\}$, which allows us to understand the (in)equalities pointwise for all $|x|>2r$.

In view of \eqref{H2} and \eqref{eq:ass_potential} we can now follow the proof of \cite[Lemma 4.4]{Jakubowski-Kaleta-Szczypkowski} to show that for every $\delta \in \left(0,\frac 12\alpha\right)$ there exists some $\tilde r = \tilde r(\delta) > 2r$, such that
\begin{gather*}
	|\phi(x)| 
	\leq \int_{|z| \leq \tilde r} g_{\alpha-\delta} (z-x) |V(z)| |\phi(z)| \,dz, 
	\quad |x| \geq \tilde r.
\end{gather*} 
We use this inequality to establish the upper estimates in Corollary~\ref{cor:bound_state} and~\ref{cor:bound_state_exp}. First we consider Corollary \ref{cor:bound_state}.\ref{cor:bound_state-a}. By Theorem \ref{th:subexp}.\ref{th:subexp-E} (e.g.\ with $\alpha_0=\alpha-2\delta$) combined with the uniform comparability property \eqref{eq:comp}, \eqref{eq:loc_int}, \eqref{eq:ass_potential} and \eqref{H2}, we get
\begin{gather*}
	|\phi(x)| 
	\leq c_1 \int_{|z| \leq \tilde r} f(|z-x|) |V(z)| |\phi(z)| \,dz 
    \leq c_2 \left(\int_{|z| \leq \tilde r} |V(z)| |\phi(z)| \,dz \right) f(|x|), 
    \quad |x| \geq 3 \tilde r =: \rho.
\end{gather*} 
This gives the upper bound in Corollary \ref{cor:bound_state}.\ref{cor:bound_state-b}.

In order to show the upper bound in Corollary \ref{cor:bound_state_exp}.\ref{cor:bound_state_exp-a}, \ref{cor:bound_state_exp-b}, we assume that $\alpha > \omega^*(\kappa)$. Pick $\delta \in \left(0,\frac 12\alpha\right)$ in such a way that $\alpha-\delta > \omega^*(\kappa)$, and proceed in the same way as in the proof of Corollary~\ref{cor:bound_state}.\ref{cor:bound_state-a}, using Theorem \ref{th:exp_sharp} instead of Theorem \ref{th:subexp}.\ref{th:subexp-E}. 

We will finally establish the upper estimates in Corollary \ref{cor:bound_state_exp}.\ref{cor:bound_state_exp-c}, \ref{cor:bound_state_exp-d}. Fix $\epsilon>0$, assume that $\alpha \leq \omega(\kappa)$ and use the upper bound in Theorem~\ref{th:exp}.\ref{th:exp-2} to get
\begin{align*}
	|\phi(x)| 
	&\leq c_3 \int_{|z| \leq \tilde r} \exp\left(-(\gamma_{\alpha-\delta}-\epsilon/2)|x-z|\right) |V(z)| |\phi(z)| \,dz  \\
    &\leq c_4 \left(\int_{|z| \leq \tilde r} |V(z)| |\phi(z)| \,dz \right) \exp\left(-(\gamma_{\alpha-\delta}-\epsilon/2)|x|\right), 
    \quad |x| \geq 3 \tilde r =: \rho.
\end{align*}
Because of the continuity of the map $\alpha \mapsto \gamma_{\alpha}$ we know that for every $\alpha>0$ and $\epsilon >0$ there exists some $\delta >0$ such that $\gamma_{\alpha-\delta} \geq \gamma_{\alpha} - \epsilon/2$. This gives the claimed estimates.  

In order to prove the lower estimates for the ground state in Corollaries \ref{cor:bound_state}.\ref{cor:bound_state-b} and \ref{cor:bound_state_exp}.\ref{cor:bound_state_exp-b}, \ref{cor:bound_state_exp-d}, we proceed in a similar way. We use the inequality
\begin{gather*}
	\phi(x) 
	\geq \int_{|z| \leq \tilde r} g_{\alpha} (z-x) |V(z)| \phi(z) \,dz, 
	\quad |x| \geq \tilde r,
\end{gather*}
and apply \eqref{eq:res_low} and the lower bounds in Theorems \ref{th:exp}.\ref{th:exp-2} and \ref{th:exp_sharp} combined with \eqref{eq:comp}. Here we can increase $\tilde r$ if necessary as we want to be sure that $\int_{|z| \leq \tilde r} |V(z)| \phi(z) \,dz >0$. 
\end{proof}

\bibliographystyle{abbrv}

\end{document}